\newcounter{pcounter}
\newcommand{\ZZ}{\Bbb Z}
\newcommand{\RR}{\Bbb R}
\newcommand{\TT}{\Bbb T}
\newcommand{\NN}{\Bbb N}
\newcommand{\CC}{\Bbb C}
\newcommand{\ip}[1]{\langle #1 \rangle}
\newcommand{\widetidle}{\widetilde}
\newcommand{\varespilon}{\varepsilon}
\newcommand{\actson}{\curvearrowright}
\newtheorem{?}{Question}
\newtheorem{theorem}{Theorem}
\newtheorem{definition}[theorem]{Definition}
\newtheorem{proposition}[theorem]{Proposition}
\newtheorem{cor}[theorem]{Corollary}
\newtheorem{lemma}[theorem]{Lemma}
\newcommand{\FF}{\Bbb F}
\DeclareMathOperator{\Sym}{Sym}
\DeclareMathOperator{\Span}{Span}
\DeclareMathOperator{\Map}{Map}
\DeclareMathOperator{\tr}{tr}
\DeclareMathOperator{\supp}{supp}
\DeclareMathOperator{\Rea}{Re}
\DeclareMathOperator{\Hom}{Hom}
\DeclareMathOperator{\Tr}{Tr}
\DeclareMathOperator{\AP}{AP}
\DeclareMathOperator{\Prob}{Prob}
\DeclareMathOperator{\Ball}{Ball}
\numberwithin{theorem}{section}
\begin{document}
\title{Sofic Entropy of Gaussian Actions}      
\author{Ben Hayes}
\address{Stevenson Center\\
         Nashville, TN 37240}
\email{benjamin.r.hayes@vanderbilt.edu}
\date{\today}

\begin{abstract} Associated to any orthogonal representation of a countable discrete group is a probability measure-preserving action called the Gaussian action. Using the Polish model formalism we developed before, we compute the entropy (in the sense of Bowen, Kerr-Li) of Gaussian actions when the group is sofic. Computations of entropy for Gaussian actions has only been done when the acting group is abelian and thus our results are new even in the amenable case. Fundamental to our approach are methods of noncommutative harmonic analysis and $C^{*}$-algebras which replace the Fourier analysis used in the abelian case.

\end{abstract}

\keywords{ Sofic entropy, Gaussian actions, noncommutative harmonic analysis.}

\subjclass[2010]{ 37A35,28D15, 37A55, 22D25,43A65}

\maketitle
\tableofcontents

\section{Introduction}
	
	This paper is concerned with giving new computations for sofic entropy, specifically in computing entropy of Gaussian actions. Entropy for actions of $\ZZ$ is classical and goes back to work of Kolmogorov and Sina\v\i. Entropy roughly measures how chaotic the action of $\ZZ$ is.  Kieffer in \cite{Kieff} showed that one can generalize entropy to actions of amenable groups. An amenable group is a group which has a sequence of nonempty finite sets which are almost invariant under translation by elements of the group. Abelian groups, nilpotent groups and solvable groups are amenable, but the free group on $r$ letters is not if $r\geq 2.$ Entropy for amenable groups has been studied by many people and is a useful invariant in ergodic theory: it can be computed in many cases and positivity of entropy implies interesting structural properties.
	
	Fundamental examples in \cite{OrnWeiss} led many to believe that there cannot be a good entropy theory beyond the realm of amenable groups. In groundbreaking and seminal work Bowen in \cite{Bow} defined a notion of entropy for actions of sofic groups. The class of sofic groups is considerably larger than that of amenable groups: it contains all amenable groups, all residually finite groups, all linear groups  and is closed under free products with amalgamation over amenable subgroups (see \cite{DKP},\cite{KerrDykemaPichot2},\cite{ESZ1},\cite{LPaun},\cite{PoppArg}). Since the subject is fairly young, not as much is known about sofic entropy as entropy for actions of amenable groups but some structure is beginning to emerge. It can be calculated for some interesting examples such as Bernoulli shifts (see \cite{Bow},\cite{KLi}) as well as algebraic actions (see \cite{BowenEntropy},\cite{BowenLi},\cite{Me5},\cite{KLi}). Additionally, recent work in \cite{Meyerovitch},\cite{Me6} shows that one can deduce structural properties of an action from assumptions of positive sofic entropy.

Our goal in this paper is to add to the list of computations of sofic entropy by computing the entropy of \emph{Gaussian actions}.  The Gaussian  action construction is a way to associate, in a functorial way, to any orthogonal representation $\rho\colon\Gamma\to \mathcal{O}(\mathcal{H})$ a probability measure-preserving action called the Gaussian action.  We refer the reader to Section \ref{S:Gauss} for the precise definition.  For intuition we mention that if $\mathcal{H}$ is finite-dimensional, then this action is just the induced action of $\Gamma$ on $\mathcal{H}$ with the Gaussian measure. The Gaussian action  is a natural generalization of this construction to the case of infinite-dimensional representations.

	Recall that the left regular representation $\lambda_{\RR}\colon\Gamma\to \ell^{2}(\Gamma,\RR)$ is given by
	\[(\lambda_{\RR}(g)\xi)(h)=\xi(g^{-1}h),\mbox{ for $\xi\in \ell^{2}(\Gamma,\RR),g,h\in\Gamma.$}\]
It is known that the Gaussian action associated to the left-regular representation $\Gamma\actson \ell^{2}(\Gamma,\RR)$ is the Bernoulli action with base $(\RR,\nu)$ where $\nu$ is the Gaussian measure.  Thus Gaussian actions are a class of actions which are similar to the class of Bernoulli shifts.

 	To the best of our knowledge, entropy for Gaussian actions has only been computed when the acting group is abelian (see \cite{Leman} and \cite{Dooley}). To state our result, we need to introduce the following decomposition of representations. We define singularity of orthogonal representations exactly as in the unitary case. Given orthogonal representations $\rho_{1},\rho_{2}$ of $\Gamma$, we say that $\rho_{1}$ is singular with respect to $\rho_{2}$ and write $\rho_{1}\perp \rho_{2}$ if no nonzero subrepresentation of $\rho_{1}$ embeds into $\rho_{2}.$ We say that $\rho_{1}$ is \emph{absolutely continuous with respect to} $\rho_{2}$, and write $\rho_{1}\ll \rho_{2},$ if $\rho_{1}$ embeds into $\rho_{2}^{\oplus \infty}.$ For general pairs of orthogonal representations $\rho_{1},\rho_{2}$ we can always write
 \[\rho_{1}=\rho_{1,a}\oplus \rho_{1,s}\]
 where $\rho_{1,s},\rho_{1}$ are singular and $\rho_{1,a}\ll \rho_{2}.$ If $\Gamma$ is abelian, this reduces to the Lebesgue decomposition in measure theory. We thus regard this as a noncommutative analogue of the Lebesgue decomposition.

\begin{theorem}\label{T:GaussianActions} Let $\Gamma$ be a countable discrete sofic group with sofic approximation $\Sigma.$ Let $\rho\colon \Gamma\to \mathcal{O}(\mathcal{H})$ be an orthogonal representation with $\mathcal{H}$ separable. Let $\Gamma\actson (X_{\rho},\mu_{\rho})$ be the induced Gaussian action; write $\rho=\rho_{a}\oplus \rho_{s}$ where $\rho_{s}$ and $\lambda_{\Gamma,\RR}$ are singular and $\rho_{a}\ll\lambda_{\Gamma,\RR}.$ Then
\[h_{\Sigma,\mu_{\rho}}(X_{\rho},\Gamma)=\begin{cases}
-\infty, &\textnormal{ if $h_{\Sigma,\mu_{\rho_{s}}}(X_{\rho_{s}},\Gamma)=-\infty$}\\
0,      &\textnormal{if $\rho_{a}=0$ and $h_{\Sigma,\mu_{\rho_{s}}}(X_{\rho_{s}},\Gamma)\ne -\infty$}\\
\infty, &\textnormal{if $\rho_{a}\ne 0$ and $h_{\Sigma,\mu_{\rho_{s}}}(X_{\rho_{s}},\Gamma)\ne -\infty$.}
\end{cases}\]
\end{theorem}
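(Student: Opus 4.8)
The plan is to compute the microstate spaces of $X_\rho$ through the Polish-model formalism, after using functoriality of the Gaussian construction to isolate the two pure cases --- singular and absolutely continuous --- and doing separate bookkeeping for when microstate spaces are empty. The Gaussian functor carries the orthogonal direct sum $\rho=\rho_a\oplus\rho_s$ to the product system $(X_{\rho_a},\mu_{\rho_a})\times(X_{\rho_s},\mu_{\rho_s})$ with the diagonal action, and the coordinate projections are $\Gamma$-equivariant. In particular $X_{\rho_s}$ is a factor of $X_\rho$; since every good observable model for the bigger system $X_\rho$ pushes forward to one for the quotient $X_{\rho_s}$, emptiness of all microstate spaces for $X_{\rho_s}$ forces emptiness for $X_\rho$, so $h_{\Sigma,\mu_{\rho_s}}(X_{\rho_s},\Gamma)=-\infty$ implies $h_{\Sigma,\mu_\rho}(X_\rho,\Gamma)=-\infty$. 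This disposes of the first case, and in the remaining two cases I may assume the microstate spaces for $X_{\rho_s}$ are nonempty; since sofic measure entropy of a probability measure-preserving action lies in $\{-\infty\}\cup[0,\infty]$, all lower bounds $h_\Sigma\ge 0$ are then free and only the exact values remain.

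Next I would treat the purely singular case $\rho=\rho_s$, i.e.\ $\rho\perp\lambda_{\Gamma,\RR}$, and prove $h_{\Sigma,\mu_\rho}(X_\rho,\Gamma)\le 0$. Using the Polish-model description, a microstate for $X_\rho$ over the sofic approximant $\sigma_n\colon\Gamma\to\Sym(d_n)$ is, up to a harmless perturbation, a linear map $T$ from a finite-dimensional subspace $W\subseteq\mathcal H$ into $\RR^{d_n}$ which (i) approximately preserves the inner product after the normalization $\tfrac1{d_n}\langle Tx,Ty\rangle\approx\langle x,y\rangle$, recording that $T$ pushes $\mu_\rho$ to an approximately Gaussian measure with the right covariance, and (ii) approximately intertwines $\rho|_W$ with the permutation representation of $\Gamma$ on $\RR^{d_n}$ coming from $\sigma_n$. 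The heart of the matter is that the permutation representations $\RR^{d_n}$ converge, in normalized trace, to a multiple of $\lambda_{\Gamma,\RR}$ (this is just $\tfrac1{d_n}\tr(\sigma_n(g))\to\delta_{g,e}$), so an approximately $\sigma_n(\Gamma)$-invariant subspace of $\RR^{d_n}$ carrying an approximate copy of a nonzero piece of $\rho$ would exhibit, in the limit, that piece inside a multiple of $\lambda_{\Gamma,\RR}$, contradicting $\rho\perp\lambda_{\Gamma,\RR}$; hence such subspaces have dimension $o(d_n)$. I would make this quantitative by manufacturing, from $\sigma_n$ and a suitably chosen finitely supported element of $\CC[\Gamma]$ --- a noncommutative substitute for a Fej\'{e}r-type kernel --- a self-adjoint contraction $P_n\in M_{d_n}(\RR)$ that asymptotically dominates the image of every microstate and satisfies $\tfrac1{d_n}\tr(P_n)\to 0$ by singularity of $\rho$ relative to $\lambda_{\Gamma,\RR}$ together with asymptotic multiplicativity of $\sigma_n$; then the microstate space has volume $e^{o(d_n)}$ and $h_{\Sigma,\mu_\rho}(X_\rho,\Gamma)\le 0$.

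For the absolutely continuous case I would show $\rho_a\ne 0$ implies $h_{\Sigma,\mu_{\rho_a}}(X_{\rho_a},\Gamma)=\infty$ and combine this with the behaviour of sofic entropy of Gaussian actions under direct sums established earlier, namely $h_{\Sigma,\mu_{\sigma\oplus\tau}}(X_{\sigma\oplus\tau},\Gamma)=h_{\Sigma,\mu_\sigma}(X_\sigma,\Gamma)+h_{\Sigma,\mu_\tau}(X_\tau,\Gamma)$ with the convention that $\infty$ absorbs any value other than $-\infty$: applying this to $\rho=\rho_a\oplus\rho_s$ and using the hypothesis $h_{\Sigma,\mu_{\rho_s}}(X_{\rho_s},\Gamma)\ne-\infty$ gives $h_{\Sigma,\mu_\rho}(X_\rho,\Gamma)=\infty$. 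To see $h_{\Sigma,\mu_{\rho_a}}(X_{\rho_a},\Gamma)=\infty$: since $\rho_a$ is nonzero and embeds into $\lambda_{\Gamma,\RR}^{\oplus\infty}$, it contains a nonzero subrepresentation $\pi$ that embeds into $\lambda_{\Gamma,\RR}$ itself (a matrix-coefficient argument: the diagonal coefficients of $\lambda_{\Gamma,\RR}$ form the positive part of the predual of the group von Neumann algebra, which is closed under the sums arising from collapsing the $\ell^{2}(\NN)$ factor, so for nonzero $v\in\rho_a\subseteq\ell^2(\Gamma,\RR)\otimes\ell^2(\NN)$ the cyclic subrepresentation generated by $v$ is isomorphic to a subrepresentation of $\lambda_{\Gamma,\RR}$). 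A nonzero piece of $\lambda_{\Gamma,\RR}$ fits full-dimensionally inside the permutation representations $\RR^{d_n}$, so microstates for $X_\pi$ exist in abundance at every resolution, and since the base of the corresponding Bernoulli-type factor of $X_\pi$ is a non-atomic (Gaussian) measure, with $+\infty$ Shannon entropy, one gets $h_{\Sigma,\mu_\pi}(X_\pi,\Gamma)=\infty$ and hence $h_{\Sigma,\mu_{\rho_a}}(X_{\rho_a},\Gamma)=\infty$ by the direct-sum formula again. (Alternatively, one invokes the closed-form estimate for $h_\Sigma$ of a Gaussian action proved earlier, which produces $+\infty$ exactly when the $\lambda_{\Gamma,\RR}$-absolutely-continuous part of the representation is nonzero, the $+\infty$ coming from non-atomicity of the Gaussian base.)

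The main obstacle is the upper bound $h_{\Sigma,\mu_\rho}(X_\rho,\Gamma)\le 0$ in the singular case: making rigorous, and uniform over the sofic approximation, the assertion that a representation disjoint from $\lambda_{\Gamma,\RR}$ cannot be approximately contained in the permutation representations $\RR^{d_n}$ except in a negligible number of dimensions. In the abelian case this is the Fourier-analytic fact that a measure on $\widehat\Gamma$ singular to Haar measure is not seen by the finite approximations; for general sofic $\Gamma$ one must replace this by an operator-algebraic argument on $C^{*}_{\lambda}(\Gamma)$ --- and, to handle representations not weakly contained in $\lambda_{\Gamma,\RR}$, on the full group $C^{*}$-algebra --- controlling the relevant traces through the asymptotic multiplicativity of the maps $\sigma_n$, which is precisely where the noncommutative harmonic analysis advertised in the introduction is indispensable.
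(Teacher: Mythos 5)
Your overall architecture matches the paper's: dispose of the $-\infty$ case by noting that a joint microstate for $X_{\rho_a}\times X_{\rho_s}$ has a marginal that is a microstate for $X_{\rho_s}$; prove $h\le 0$ when $\rho\perp\lambda_{\Gamma,\RR}$ by noncommutative harmonic analysis on the group ($C^{*}$-)algebra; and get $+\infty$ from the absolutely continuous part by a Gaussian volume count against the extended sofic approximation. The singular-case upper bound you sketch is essentially the content of Theorem 4.4 of the companion paper \cite{Me6}, which this paper simply cites (Corollary \ref{C:zerogaussian}): the first chaos $\overline{\Span}\{\omega(\xi):\xi\in\mathcal{H}\}$ of the Koopman representation is isomorphic to $\rho$ and generates the $\sigma$-algebra, so singularity to $\lambda_{\Gamma,\RR}$ forces $h\le 0$. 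Your sketch of that mechanism is in the right spirit, though note that microstates for the Polish model $\RR^{\Gamma}$ are not linear maps and that weak convergence of empirical measures does not control second moments; the tightness/generating-set machinery of Section \ref{S:tightness} exists precisely to bridge that.

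The genuine gap is in your third paragraph: you invoke an additivity formula $h_{\Sigma,\mu_{\sigma\oplus\tau}}(X_{\sigma\oplus\tau},\Gamma)=h_{\Sigma,\mu_{\sigma}}(X_{\sigma},\Gamma)+h_{\Sigma,\mu_{\tau}}(X_{\tau},\Gamma)$ as if ``established earlier.'' No such formula is established, and for sofic entropy the superadditivity direction you need ($h(X\times Y)\ge h(X)+h(Y)$, so that $\infty$ plus something $\ne-\infty$ gives $\infty$) is false in general: a product of two actions each admitting many microstates can fail to admit \emph{joint} microstates, and can even have entropy $-\infty$. The paper circumvents this by proving a product theorem directly (Theorem \ref{T:GaussianproductSection6}): for $\rho\ll\lambda_{\Gamma,\RR}$ and \emph{any} action $\Gamma\actson(Y,\nu)$ with $h_{\Sigma,\nu}(Y,\Gamma)\ge 0$, the product has infinite entropy. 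The point, which your argument must supply and does not, is that the Gaussian microstates are \emph{generic}: a $\nu_{i}$-random point $x$ of $p_{i}\RR^{d_{i}}$ yields, with probability tending to $1$, a microstate $\phi^{(E)}_{x}$ that is simultaneously compatible with any prescribed microstate $\psi_{i}$ for $Y$ (Lemma \ref{L:concentration} proves the concentration statement with the test functions $f\otimes 1\otimes g$ for exactly this purpose). Only then does the lower bound $\frac{1}{2}\log\frac{1-\sqrt{\varepsilon}}{\varepsilon}-\frac{1}{2}\log R-H(\sqrt{\varepsilon})$ on separated joint microstates follow, and letting $\varepsilon\to 0$ gives $\infty$ for the full product $X_{\rho_{a}}\times X_{\rho_{s}}$, not merely for $X_{\rho_{a}}$ alone. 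Your parenthetical fallback (``invoke the closed-form estimate for $h_{\Sigma}$ of a Gaussian action proved earlier'') is circular, since that estimate is the theorem being proved. To repair the proposal, replace the additivity claim with a direct construction of joint microstates along these lines.
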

Since it appears to be new, we specifically mention the amenable case.
\begin{cor} Let $\Gamma$ be a countable discrete amenable group. Let $\rho\colon \Gamma\to \mathcal{O}(\mathcal{H})$ be an orthogonal representation and $\Gamma\actson (X_{\rho},\mu_{\rho})$ the induced Gaussian action. Write $\rho=\rho_{1}\oplus \rho_{2}$ where $\rho_{2}$ is embeddable into $\Gamma\actson \ell^{2}(\Gamma,\RR)^{\oplus\infty},$ and $\Hom_{\Gamma}(\rho_{1},\lambda_{\Gamma,\RR})=\{0\}.$ Then $h_{\mu_{\rho}}(X_{\rho},\Gamma)=\infty$ if and only if $\rho_{2}\ne 0,$ and if $\rho_{2}=0,$ then $h_{\mu_{\rho}}(X_{\rho},\Gamma)=0.$
\end{cor}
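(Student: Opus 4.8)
The plan is to deduce this directly from Theorem \ref{T:GaussianActions}, using only two standard facts about entropy of amenable group actions together with a short representation‑theoretic identification of the decomposition. First I would invoke the comparison theorem of Kerr--Li: for an amenable acting group $\Gamma$ and \emph{any} sofic approximation $\Sigma$, the measure sofic entropy $h_{\Sigma,\nu}(Y,\Gamma)$ of a p.m.p.\ action agrees with its classical (Kieffer) entropy $h_{\nu}(Y,\Gamma)$ (see \cite{KLi}, \cite{Kieff}); in particular it does not depend on $\Sigma$, which is what legitimizes the notation $h_{\mu_\rho}(X_\rho,\Gamma)$ in the statement. Second, the classical entropy of a p.m.p.\ action of an amenable group is obtained as a supremum over finite partitions $\xi$ of limits $\lim_n \tfrac{1}{|F_n|}H_\nu\bigl(\bigvee_{g\in F_n} g\cdot\xi\bigr)$ of nonnegative quantities along a F\o lner sequence, hence is always $\geq 0$; in particular it is never $-\infty$. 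Applying this to $\Gamma\actson(X_{\rho_s},\mu_{\rho_s})$ shows $h_{\Sigma,\mu_{\rho_s}}(X_{\rho_s},\Gamma)\ne-\infty$, so the first case of Theorem \ref{T:GaussianActions} never arises here.

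Next I would identify the decomposition $\rho=\rho_1\oplus\rho_2$ of the corollary with the decomposition $\rho=\rho_a\oplus\rho_s$ of Theorem \ref{T:GaussianActions}. The hypothesis that $\rho_2$ embeds into $\Gamma\actson\ell^2(\Gamma,\RR)^{\oplus\infty}$ is by definition $\rho_2\ll\lambda_{\Gamma,\RR}$. For $\rho_1$: if some nonzero subrepresentation $\sigma\leq\rho_1$ embedded into $\lambda_{\Gamma,\RR}$, then composing the orthogonal projection $\rho_1\to\sigma$ (an intertwiner) with that embedding would produce a nonzero element of $\Hom_\Gamma(\rho_1,\lambda_{\Gamma,\RR})$; hence the hypothesis $\Hom_\Gamma(\rho_1,\lambda_{\Gamma,\RR})=\{0\}$ forces $\rho_1\perp\lambda_{\Gamma,\RR}$. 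Thus $\rho=\rho_1\oplus\rho_2$ is a decomposition of the type occurring in Theorem \ref{T:GaussianActions}, and by uniqueness of that decomposition $\rho_1=\rho_s$ and $\rho_2=\rho_a$. (Uniqueness follows from the vanishing $\Hom_\Gamma(\alpha,\beta)=\{0\}$ whenever $\alpha\ll\lambda_{\Gamma,\RR}$ and $\beta\perp\lambda_{\Gamma,\RR}$: the closed range of a nonzero intertwiner $\alpha\to\beta$ is a nonzero subrepresentation of $\beta$ isomorphic to a quotient of $\alpha$, so it is simultaneously singular and absolutely continuous with respect to $\lambda_{\Gamma,\RR}$, which is impossible for a nonzero representation; feeding the two projections between the two decompositions into this observation identifies them.)

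With these identifications the corollary is immediate from Theorem \ref{T:GaussianActions}: since $h_{\Sigma,\mu_{\rho_s}}(X_{\rho_s},\Gamma)\ne-\infty$, we are in the second case when $\rho_2=\rho_a=0$, giving $h_{\mu_\rho}(X_\rho,\Gamma)=0$, and in the third case when $\rho_2=\rho_a\ne0$, giving $h_{\mu_\rho}(X_\rho,\Gamma)=\infty$. Equivalently, $h_{\mu_\rho}(X_\rho,\Gamma)=\infty$ if and only if $\rho_2\ne0$, and $h_{\mu_\rho}(X_\rho,\Gamma)=0$ when $\rho_2=0$, which is the assertion.

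All the genuine mathematical content is contained in Theorem \ref{T:GaussianActions}; the present reduction only packages it. The two points requiring a little care are citing the correct form of the Kerr--Li comparison (so that $h_{\Sigma,\mu_\rho}$ is $\Sigma$‑independent, nonnegative, and equal to the classical entropy even when it is infinite) and the elementary bookkeeping identifying the two Lebesgue‑type decompositions; I do not expect either to pose a real obstacle.
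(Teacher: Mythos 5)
Your proposal is correct and is essentially the proof the paper intends: the corollary is stated without separate argument precisely because it follows from Theorem \ref{T:GaussianActions} via the Kerr--Li comparison theorem for amenable groups (which makes $h_{\Sigma,\mu}$ independent of $\Sigma$, equal to the classical entropy, and hence nonnegative, ruling out the $-\infty$ case), together with the observation that $\Hom_{\Gamma}(\rho_{1},\lambda_{\Gamma,\RR})=\{0\}$ is equivalent to $\rho_{1}\perp\lambda_{\Gamma,\RR}$ (Lemma \ref{L:realsingularity}). The only cosmetic point is the citation: the measure-entropy comparison for amenable groups is in Kerr--Li's ``Soficity, amenability, and dynamical entropy'' rather than \cite{KLi}, but this does not affect the argument.
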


In \cite{Me5} we gave a formula for entropy of a probability measure-preserving action $\Gamma\actson (X,\mu)$ when $X$ is a Polish space, $\mu$ is a Borel probability measure, and $\Gamma\actson X$ by homeomorphisms. Given an arbitrary probability measure-preserving action $\Gamma\actson (Y,\nu),$ a probability measure-preserving action $\Gamma\actson (X,\mu)$ where $X$ is Polish, $\mu$ is a Borel measure on $X$ and $\Gamma\actson X$ by homeomorphisms is called a Polish model for $\Gamma\actson (Y,\nu)$ if $\Gamma\actson (X,\mu)\cong \Gamma\actson (Y,\nu)$ as probability measure-preserving actions. Our definition of entropy in terms of a Polish model took into account the topology of $X$ in a nontrivial way like the definition for entropy in the presence of a compact model developed by Kerr-Li in \cite{KLi}. Our computation for the entropy of Gaussian actions goes through Polish models. This model is associated to a family of generators for the representation, and the measure is canonically defined in terms of the representation. Although one can write down a compact model, it is unnatural and the measure is not expressed nicely in terms of the representation. We mention that entropy in the sofic case is roughly a measure of ``how many'' finitary simulations a probability measure-preserving action has. The typical way to prove existence of these simulations is through a probabilistic argument. For the Gaussian action, our probabilistic argument uses  Gaussian measures on finite-dimensional spaces. A consequence of our methods is that, associated to any sofic approximation of a group, we have a natural way of describing the Gaussian measure for a subrepresentation of the left regular representation as a limit of finite-dimensional Gaussian measures.
	
	Let us sketch how we are able to handle the case $\rho$ is singular with respect to $\lambda_{\RR}$ when $\Gamma$ is nonabelian. In the case $\Gamma=\ZZ$, the problem of showing that $\Gamma\actson (X_{\rho},\mu_{\rho})$ has zero entropy reduces to the fact that if $\mu,\nu$ are two singular measures on $\TT=\RR/\ZZ,$ then there is a $f\in C(\TT)$ with $0\leq f\leq 1$ so that $f$ is ``close'' to $1$ in $L^{2}(\mu)$ and close to zero in $L^{2}(\nu).$ As the representation theory of a group is captured by its universal $C^{*}$-algebra it is natural to replace $C(\TT)$ with $C^{*}(\Gamma)$ in the nonabelian case. This statement about singularity of representations then becomes a statement about singularity of measures and we can prove an analogous characterization of singularity of representations in terms of elements of $C^{*}(\Gamma).$ In short, we abstract the harmonic analysis present in the abelian case to noncommutative harmonic analysis using the framework of $C^{*}$-algebras.

We mention that if $\Gamma$ is amenable and is an infinite conjugacy class group (i.e. every nontrivial conjugacy class is infinite) then there is a easy proof that if $\rho\colon \Gamma\to\mathcal{O}(\mathcal{H})$ embeds into $\lambda_{\Gamma,\RR}^{\oplus \infty},$ then $h_{\mu_{\rho}}(X_{\rho},\Gamma)=\infty.$ Namely, one can show in this case that there is an $n$ so that $\rho^{\oplus n}$ contains $\lambda_{\Gamma,\RR}$ (this is a consequence of the theory of $\textrm{II}_{1}$ factors).  In this case $(X_{\rho},\mu_{\rho})$ will factor onto  $(X_{\lambda},\mu_{\lambda})$ which is isomorphic to $\Gamma\actson (\RR,\eta)^{\Gamma}$ where $\eta$ is the Gaussian measure. Since entropy for \emph{amenable} groups decreases under factor maps, we are done. This proof fails disastrously in the non-amenable case. It is  \emph{very} far from true that sofic entropy decreases under factor maps for non-amenable groups. In fact, it can be shown that for \emph{every} nonamenable group there is a $\alpha>0$ so that if $(X,\mu)$ is a measure space with $H(X,\mu)\geq \alpha$ then $\Gamma\actson (X,\mu)^{\Gamma}$ factors onto \emph{every} nontrivial Bernoulli shift (see \cite{BowenOrn} Corollary 1.6), even a Bernoulli shift with infinite entropy.  Moreover,  if $\Gamma$ contains a free group then $\alpha$ can be taken to be any positive number \cite{BowenWeak}. In fact, even more is true: a recent result of Seward (see \cite{SewardSmallFactor}, Theorem 1.1) implies that for every nonamenable sofic group $\Gamma,$ there is an $\alpha>0$ so that any probability measure-preserving action of  $\Gamma$ is a factor of an action having sofic entropy less than $\alpha.$  Thus, there is no simple proof in the nonamenable case based on factors, and we \emph{must} use a direct proof. Even in the amenable case this argument relies on the group being an infinite conjugacy class group and one needs general methods to handle the general case.

\section{Preliminaries on Sofic Entropy}

We use $S_{n}$ for the symmetric group on $n$ letters. If $A$ is a  set, we will use $\Sym(A)$ for the set of bijections of $A.$
\begin{definition}\label{S:soficdefn}\emph{Let $\Gamma$ be a countable discrete group. A} sofic approximation of $\Gamma$ \emph{is a sequence $\Sigma=(\sigma_{i}\colon \Gamma\to S_{d_{i}})$ of functions (not assumed to be homomorphisms) so that}
\[\lim_{i\to\infty}u_{d_{i}}(\{1\leq k\leq d_{i}:\sigma_{i}(g)\sigma_{i}(h)(k)=\sigma_{i}(gh)(k)\})=1,\mbox{\emph{for all $g,h\in \Gamma$}}\]
\[\lim_{i\to\infty} u_{d_{i}}(\{1\leq k\leq d_{i}:\sigma_{i}(g)(k)\ne \sigma_{i}(h)(k)\})=1,\mbox{\emph{for all $g\ne h$ in $\Gamma.$}}\]
\emph{We will call $\Gamma$} sofic \emph{if it has a sofic approximation.}\end{definition}
 All amenable groups and residually finite groups are sofic. Also, it is known that soficity is closed under free products  with amalgamation over amenable subgroups (see \cite{ESZ1},\cite{LPaun},\cite{DKP},\cite{KerrDykemaPichot2}, \cite{PoppArg}). Additionally, residually sofic groups and locally sofic groups are sofic. Thus by Malcev's Theorem we know all linear groups are sofic.  It is known that graph products of sofic groups are sofic by \cite{CHR}. If $\Lambda$ is a subgroup of $\Gamma,$ and  $\Lambda$ is sofic and $\Gamma\actson \Gamma/\Lambda$ is amenable (in the sense of having a $\Gamma$-invariant mean) then $\Gamma$ is sofic. One can argue this by the same methods of Theorem 1 of \cite{ESZ1} (for example consider the observations after Definition 12.2.12 of \cite{BO}).

 	We recall the definition of entropy in the presence of a Polish model given in \cite{Me6}. Let $X$ be a Polish space and let $\Gamma$ be a countable, discrete group with $\Gamma\actson X$ by homeomorphisms. We say that a continuous  pseudometric $\Delta$ on $X$ is \emph{dynamically generating} if for every $x\in X$ and every neighborhood $U$ of $x$ in $X$ there exists a finite $F\subseteq \Gamma$ and a $\delta>0$ so that if $y\in X$ and
\[\max_{g\in F}\Delta(gx,gy)<\delta\]
then $y\in U.$ Notice that our definition includes the hypothesis that $\Delta$ is continuous. We use $C_{b}(X)$ for the Banach space of bounded, continuous functions on $X$ with norm
\[\|f\|=\sup_{x\in X}|f(x)|.\]

	Given a pseudometric space $(X,\Delta),$ $A,B\subseteq X$ and $\varepsilon>0,$  we say that $A$ is $\varepsilon$-contained in $B$ and write $A\subseteq_{\varepsilon}B$ if for any $a\in A,$ there is a $b\in B$ so that $\Delta(a,b)\leq\varepsilon.$ We say that $A\subseteq X$ is $\varepsilon$-dense if $X\subseteq_{\varepsilon}A.$ We use $S_{\varepsilon}(X,\Delta)$ for the minimal cardinality of an $\varepsilon$-dense subset of $X.$ We say that $A\subseteq X$ is $\varepsilon$-separated if for any $a_{1}\ne a_{2}$ in $A$ we have $\Delta(a_{1},a_{2})>\varepsilon.$ We use $N_{\varepsilon}(A,\Delta)$ for the maximal cardinality of an $\varepsilon$-separated subset of $X.$ We leave it as an exercise to show that
	\begin{equation}\label{E:separationspanning}
N_{2\varepsilon}(A,\Delta)\leq S_{\varepsilon}(A,\Delta)\leq N_{\varepsilon}(A,\Delta),
\end{equation}
and that if $\delta>0$ and $A\subseteq_{\delta}B$ then
\[S_{2(\varepsilon+\delta)}(A,\Delta)\leq S_{\varepsilon}(B,\Delta).\]
We use $\Delta_{2}$ for the metric on $X^{n}$ defined by
\[\Delta_{2}(\phi,\psi)^{2}=\frac{1}{n}\sum_{j=1}^{n}\Delta(\phi(j),\psi(j))^{2}.\]
If $X$ is a Polish space we use $\Prob(X)$ for the space of all Borel probability measures on $X.$  If $\Gamma$ is a countable, discrete group with $\Gamma\actson X$ by homeomorphisms we use $\Prob_{\Gamma}(X)$ for the space of all $\Gamma$-invariant elements of $\Prob(X).$  We are now ready to state our definition of sofic entropy from \cite{Me6}. It is defined by counting the exponential growth of maps from $\{1,\dots,d_{i}\}\to X$ which approximately preserve the measure-theoretic structure of $X$ and are approximately equivariant. We call these maps \emph{microstates} (this is a heuristic term and will not be defined rigorously).

\begin{definition}\emph{Let $\Gamma$ be a countable, discrete, sofic group with sofic approximation $\Sigma=(\sigma_{i}\colon\Gamma\to S_{d_{i}}).$ Let $X$ be Polish space with $\Gamma\actson X$ by homeomorphisms. Fix a bounded, continuous pseudometric on $X.$ For a finite $F\subseteq\Gamma$ and $\delta>0$ we let $\Map(\Delta,F,\delta,\sigma_{i})$ be all $\phi\in X^{d_{i}}$ so that}
\[\max_{g\in F}\Delta_{2}(g\phi,\phi\circ \sigma_{i}(g))<\delta.\]
\end{definition}

Notice that $\Map(\Delta,F,\delta,\sigma_{i})$ only accounts for the group action and not the measure-theoretic structure of $X.$ Recall that $C_{b}(X)$ denotes the space of bounded continuous functions on $X.$ For $\mu\in \Prob(X),$ a finite $L\subseteq C_{b}(X),$ and a $\delta>0$ we let
\[U_{L,\delta}(\mu)=\bigcap_{f\in L}\left\{\nu\in \Prob(X):\left|\int f\,d\mu-\int f\,d\nu\right|<\delta\right\}.\]
The sets $U_{L,\delta}(\mu)$ form a basis of open sets for a topology called the \emph{weak topology}. We use this topology to account for the measure-theoretic structure in our microstates.

\begin{definition}\emph{Let $\Gamma$ be a countable, discrete, sofic group with sofic approximation $\Sigma=(\sigma_{i}\colon\Gamma\to S_{d_{i}}).$ Let $X$ be Polish space with $\Gamma\actson X$ by homeomorphisms and fix $\mu\in\Prob_{\Gamma}(X).$ For finite $F\subseteq\Gamma,L\subseteq C_{b}(X)$ and $\delta>0$ we set $\Map_{\mu}(\Delta,F,\delta,L,\sigma_{i})$ to be the set of all $\phi\in \Map(\Delta,F,\delta,L,\sigma_{i})$ so that $\phi_{*}(u_{d_{i}})\in U_{L,\delta}(\mu).$}
\end{definition}

\begin{definition}\emph{Let $\Gamma$ be a countable, discrete, sofic group with sofic approximation $\Sigma=(\sigma_{i}\colon\Gamma\to S_{d_{i}}).$ Let $X$ be Polish space with $\Gamma\actson X$ by homeomorphisms and let $\mu\in \Prob_{\Gamma}(X).$ Fix a bounded, dynamically generating pseudometric on $X.$ For finite $F\subseteq\Gamma,L\subseteq C_{b}(X)$ and $\delta,\varespilon>0$ we set}
\[h_{\Sigma,\mu}(\Delta,F,\delta,L,\varepsilon)=\limsup_{i\to\infty}\frac{1}{d_{i}}\log N_{\varepsilon}(\Map_{\mu}(\Delta,F,\delta,L,\sigma_{i}),\Delta_{2}),\]
\[h_{\Sigma,\mu}(\Delta,\varepsilon)=\inf_{\substack{F\subseteq\Gamma\mbox{ finite},\\ L\subseteq C_{b}(X)\mbox{ finite},\delta>0}}h_{\Sigma,\mu}(\Delta,F,\delta,L,\varepsilon),\]
\[h_{\Sigma,\mu}(X,\Gamma)=\sup_{\varepsilon>0}h_{\Sigma,\mu}(\Delta,\varepsilon).\]
\emph{We call $h_{\Sigma,\mu}(X,\Gamma)$ the entropy of $\Gamma\actson (X,\mu)$ with respect to $\Sigma.$}
\end{definition}

It is shown in \cite{Me6} that this agrees with entropy as defined by \cite{Bow},\cite{KLi}.

\section{Generating Sets and Tightness}\label{S:tightness}
Since $C_{b}(X)$ is not separable, we would like to reduce checking the approximate measure-preserving property of our microstates from all functions in $C_{b}(X)$ to a smaller class of functions. For example $C_{b}(X)$ is separable in the topology of uniform convergence on compact sets, so if we require that we have a family of functions dense in this topology then this will give us a sufficiently small family of functions to deal with. However, for this to work we need to modify our microstates so that they have some uniform tightness. We proceed with the definitions.

\begin{definition}\label{D:generating}\emph{ Let $X$ be a Polish space. A family $\mathcal{L}\subseteq C_{b}(X)$ is said to be} generating \emph{if there is a $A>0$ so that for every $g\in C_{b}(X),$ for every compact $K\subseteq X$ and for every $\varepsilon>0$ there is a $f\in \Span(\mathcal{L})$ so that $\|f\big|_{K}-g\big|_{K}\|<\varepsilon$ and $\|f\|\leq  A\|g\|.$}
\end{definition}

We now proceed with our modified version  of sofic entropy in the case of a generating set of functions.

\begin{definition}\emph{Let $\Gamma$ be a countable discrete group and $\sigma\colon\Gamma\to S_{d}$ a function for some $d\in\NN.$ Let $X$ be a Polish space with $\Gamma\actson X$ by homeomorphisms preserving a Borel probability measure $\mu.$ Let $\Delta$ be a dynamically generating pseudometric on $X.$  For  an open subset $U$ of $X,$ for  $\eta,\delta>0$ and finite $L\subseteq C_{b}(X),F\subseteq \Gamma$ we let $\Map^{U,\eta}_{\mu}(\Delta,F,\delta,L,\sigma)$ be the set of
$\phi\in\Map_{\mu}(\Delta,F,\delta,L,\sigma)$ so that}
\[\phi_{*}(u_{d})(U)>1-\eta.\]
\emph{Suppose now that $\Gamma$ is sofic. For a dynamically generating pseudometric $\Delta,$  $\mathcal{L}\subseteq C_{b}(X)$  and a sofic approximation $\Sigma=(\sigma_{i}\colon\Gamma\to S_{d_{i}})$ set:}
\[h_{\Sigma,\mu}^{U,\eta}(\Delta,\varepsilon,F,\delta,L)=\limsup_{i\to \infty}\frac{1}{d_{i}}\log N_{\varepsilon}(\Map_{\mu}^{U,\eta}(\Delta,F,\delta,L,\sigma_{i})),\]
\[h_{\Sigma,\mu}^{K,\eta}(\Delta,\varepsilon,F,\delta,L)=\inf_{\textnormal{open $U\supseteq K$}}h_{\Sigma,\mu}^{U,\eta}(\Delta,\varepsilon,F,\delta,L),\]
\[h_{\Sigma,\mu}^{\eta}(\Delta,\varepsilon,F,\delta,L)=\sup_{K\subseteq X\textnormal{ compact}}h_{\Sigma}^{K,\eta}(\Delta,\varepsilon,F,\delta,L)\]
\[h_{\Sigma,\mu}(\Delta,\varepsilon,\mathcal{L})=\inf_{\substack{\eta,\delta>0,\\ F\subseteq \Gamma \textnormal{ finite},\\ L\subseteq \mathcal{L}\textnormal{ finite}}}h_{\Sigma,\mu}^{\eta}(\Delta,\varepsilon,F,\delta,L)\]
\[h_{\Sigma,\mu}(\Delta,\mathcal{L})=\sup_{\varepsilon>0}h_{\Sigma,\mu}(\Delta,\varepsilon,\mathcal{L}).\]

\end{definition}

In the above definition, the necessary trick is to add more quantifiers to the definition of sofic entropy. The reader may be concerned already by the number of quantifiers involved in the original definition of sofic entropy. When we compute sofic entropy of Gaussian actions in Section \ref{S:Gauss} it will be clear that this is the correct tradeoff. The difficulty involved in the computation will not be in dealing with quantifiers  but instead that we can only show the approximate measure-preserving property on a class of functions which is not norm dense. However, one can easily see that this class of functions is generating and so we will use the above definition.

\begin{theorem}\label{T:generating} Let $\Gamma$ be a countable discrete sofic group with sofic approximation $\Sigma.$ Let $X$ be a Polish space with $\Gamma\actson X$ by homeomorphisms preserving a Borel probability measure $\mu.$ For any dynamically generating pseudometric $\Delta$ on $X,$ $\mathcal{L}\subseteq C_{b}(X)$ generating, and $\varepsilon>0$
\[h_{\Sigma,\mu}(\Delta,\varepsilon)=h_{\Sigma,\mu}(\Delta,\varepsilon,\mathcal{L}).\]
In particular,
\[h_{\Sigma,\mu}(X,\Gamma)=h_{\Sigma,\mu}(\Delta,\mathcal{L}).\]
\end{theorem}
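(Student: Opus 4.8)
The plan is to prove the two inequalities $h_{\Sigma,\mu}(\Delta,\varepsilon) \leq h_{\Sigma,\mu}(\Delta,\varepsilon,\mathcal L)$ and $h_{\Sigma,\mu}(\Delta,\varepsilon,\mathcal L) \leq h_{\Sigma,\mu}(\Delta,\varepsilon)$ separately, since the ``In particular'' statement follows by taking the supremum over $\varepsilon > 0$ and unwinding definitions. The inequality $h_{\Sigma,\mu}(\Delta,\varepsilon,\mathcal L) \leq h_{\Sigma,\mu}(\Delta,\varepsilon)$ should be the easier direction: the microstate sets $\Map^{U,\eta}_{\mu}(\Delta,F,\delta,L,\sigma_i)$ are subsets of $\Map_{\mu}(\Delta,F,\delta,L,\sigma_i)$, and one takes $L$ ranging over finite subsets of $\mathcal L \subseteq C_b(X)$, so every infimum/supremum in the definition of $h_{\Sigma,\mu}(\Delta,\varepsilon,\mathcal L)$ is taken over a configuration that is at least as restrictive as (or dominated by) the corresponding one for $h_{\Sigma,\mu}(\Delta,\varepsilon)$; one just has to check the order of the $\sup_K$, $\inf_U$, $\inf_{\eta,\delta,F,L}$ quantifiers is compatible, which is routine monotonicity bookkeeping.

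The substance is in the reverse inequality. Fix $\varepsilon > 0$, and fix finite $F_0 \subseteq \Gamma$, finite $L_0 \subseteq C_b(X)$, and $\delta_0 > 0$; I need to produce finite $F \subseteq \Gamma$, finite $L \subseteq \mathcal L$, and $\eta, \delta > 0$, together with a compact $K$ and (for each open $U \supseteq K$) a bound showing $\frac{1}{d_i}\log N_\varepsilon(\Map^{U,\eta}_\mu(\Delta,F,\delta,L,\sigma_i)) \lesssim \frac{1}{d_i}\log N_{\varepsilon'}(\Map_\mu(\Delta,F_0,\delta_0,L_0,\sigma_i))$ up to an error that vanishes in the relevant limits. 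The first step is a tightness argument: since $\mu$ is a Borel probability measure on a Polish space, it is tight, so for any $\eta > 0$ there is a compact $K = K_\eta$ with $\mu(K) > 1 - \eta/2$; choosing $K$ in the definition of $h^\eta_{\Sigma,\mu}$ to be such a set is what makes the ``$\phi_*(u_d)(U) > 1-\eta$'' condition a genuine restriction that controls the mass of microstates near $K$. The second, and crucial, step is the approximation step: given $g \in L_0$, the generating property of $\mathcal L$ (Definition \ref{D:generating}) supplies, with a uniform constant $A$, an $f = f_g \in \Span(\mathcal L)$ with $\|f\restriction_K - g\restriction_K\| < \varepsilon'$ and $\|f\| \leq A\|g\|$. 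One then estimates, for a microstate $\phi$ whose empirical measure gives mass $> 1-\eta$ to a small open neighborhood $U$ of $K$,
\[
\left|\int g\, d\phi_*(u_d) - \int f\, d\phi_*(u_d)\right| \leq \varepsilon' + 2\eta \max(\|g\|, A\|g\|),
\]
the $\varepsilon'$ coming from the error on $K$ and the $2\eta\max(\cdots)$ from the small mass outside $U$ together with the norm bounds — here I would shrink $U$ slightly so the error on $U$ is still controlled by continuity. Combining this with the corresponding inequality for $\mu$ itself (using $\mu(K) > 1-\eta/2$) shows that if $\phi$ approximately preserves the finitely many functions in $\Span(L)$ (for a suitable finite $L \subseteq \mathcal L$ whose span contains all the $f_g$) to within $\delta$, then it approximately preserves each $g \in L_0$ to within $\delta_0$, provided $\varepsilon', \eta, \delta$ are chosen small enough relative to $\delta_0$ and the norms $\|g\|$, $g \in L_0$. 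Taking $F = F_0$ throughout, this gives an inclusion $\Map^{U,\eta}_\mu(\Delta,F_0,\delta,L,\sigma_i) \subseteq \Map_\mu(\Delta,F_0,\delta_0,L_0,\sigma_i)$ for all large $i$, whence $N_\varepsilon$ of the left side is at most $N_\varepsilon$ of the right side, and passing to $\limsup_i$, then $\inf_U$, then $\sup_K$, then $\inf_{\eta,\delta,F,L}$ on the left and $\inf_{F_0,L_0,\delta_0}$ on the right yields $h_{\Sigma,\mu}(\Delta,\varepsilon,\mathcal L) \geq h_{\Sigma,\mu}(\Delta,\varepsilon)$.

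The main obstacle I anticipate is not any single hard estimate but the careful coordination of the many quantifiers: one must pin down the order in which $\varepsilon'$ (the approximation error), $\eta$ (the tightness/mass parameter), $\delta$ (the microstate tolerance on $L$), and the finite set $L \subseteq \mathcal L$ are chosen, so that each choice depends only on data ($F_0, L_0, \delta_0$, and the sup-norms of elements of $L_0$, and the constant $A$) that is already fixed, and so that the inclusion of microstate sets is valid for all sufficiently large $i$ rather than just in some limit. A secondary subtlety is that $\Span(\mathcal L)$, not $\mathcal L$ itself, is what the generating property produces approximants in, so $L$ must be chosen as a finite subset of $\mathcal L$ large enough that $\Span(L)$ contains all finitely many $f_g$; since $L_0$ is finite this is harmless. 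Once the bookkeeping is set up, every individual estimate is elementary (triangle inequalities for integrals against empirical measures, tightness of Borel measures on Polish spaces, and the monotonicity properties of $N_\varepsilon$ and $S_\varepsilon$ recorded in \eqref{E:separationspanning}).
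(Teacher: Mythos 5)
Your substantive argument is essentially the paper's, but you have attached it to the wrong inequality, and the direction you dismiss as formal is the one your argument actually proves. Concretely: starting from an arbitrary finite $L_0\subseteq C_b(X)$, approximating each $g\in L_0$ by $f_g\in\Span(\mathcal{L})$ via the generating property, and deducing the inclusion $\Map^{U,\eta}_{\mu}(\Delta,F_0,\delta,L,\sigma_i)\subseteq\Map_{\mu}(\Delta,F_0,\delta_0,L_0,\sigma_i)$ gives $N_{\varepsilon}$ of the left at most $N_{\varepsilon}$ of the right, hence after the limits an \emph{upper} bound $h_{\Sigma,\mu}(\Delta,\varepsilon,\mathcal{L})\leq h_{\Sigma,\mu}(\Delta,\varepsilon,F_0,7\delta_0,L_0)$, and after $\inf_{F_0,\delta_0,L_0}$ the inequality $h_{\Sigma,\mu}(\Delta,\varepsilon,\mathcal{L})\leq h_{\Sigma,\mu}(\Delta,\varepsilon)$ --- not $\geq$ as you conclude. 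This is exactly the half of the theorem that the paper proves with the constant $A$, the tightness compact $K_0$, and the neighborhood $U_0$ of $K$; your sketch of it is fine, including the remark about $\Span(\mathcal{L})$ versus $\mathcal{L}$. But it is not the ``routine'' half. Your claim that $h_{\Sigma,\mu}(\Delta,\varepsilon,\mathcal{L})\leq h_{\Sigma,\mu}(\Delta,\varepsilon)$ follows from monotonicity is wrong on its own terms: while $\Map^{U,\eta}_{\mu}\subseteq\Map_{\mu}$ does shrink the counts, the infimum over $L$ in $h_{\Sigma,\mu}(\Delta,\varepsilon,\mathcal{L})$ runs only over finite subsets of $\mathcal{L}$, i.e.\ over a \emph{smaller} family of test sets, which makes that infimum \emph{larger}; the two effects pull in opposite directions and neither inequality is formal.

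What is genuinely missing is any proof of $h_{\Sigma,\mu}(\Delta,\varepsilon)\leq h_{\Sigma,\mu}(\Delta,\varepsilon,\mathcal{L})$, whose content is that imposing the extra constraint $\phi_{*}(u_{d_i})(U)>1-\eta$ does not decrease the exponential count. The paper's device is a Urysohn function: given $\eta>0$ pick compact $K$ with $\mu(X\setminus K)<\eta$, and for each open $U\supseteq K$ pick $f\in C_b(X)$ with $\chi_{K}\leq f\leq\chi_{U}$; then any $\phi$ whose empirical measure matches $\int f\,d\mu$ to within $\delta$ automatically satisfies $\phi_{*}(u_{d_i})(U)\geq\int f\,d\phi_{*}(u_{d_i})>\mu(K)-\delta>1-\eta-\delta$, so $\Map_{\mu}(\Delta,F,\delta,L\cup\{f\},\sigma_i)\subseteq\Map^{U,\eta+\delta}_{\mu}(\Delta,F,\delta,L,\sigma_i)$. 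Since $h_{\Sigma,\mu}(\Delta,\varepsilon)$ is an infimum over all finite subsets of $C_b(X)$, adding $f$ (which need not lie in $\mathcal{L}$) to the test family is legitimate, and the required lower bound on $N_{\varepsilon}(\Map^{U,\eta+\delta}_{\mu})$ follows for every $U\supseteq K$, hence survives the $\inf_{U}$ and $\sup_{K}$. Without some such argument your proposal establishes only one of the two inequalities.
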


\begin{proof} The ``in particular'' part follows from Theorem 3.12 of \cite{Me6}. We first show that
\[h_{\Sigma,\mu}(\Delta,\varepsilon)\leq h_{\Sigma,\mu}(\Delta,\varepsilon,\mathcal{L}).\]
Let $\eta>0.$ Since $X$ is Polish, we may find a compact $K\subseteq X$ with
\[\mu(X\setminus K)<\eta.\]
Fix an open subset $U$ of $X$ containing $K.$
 By Urysohn's Lemma we may find a $f\in C_{b}(X)$ with
\[\chi_{K}\leq f\leq \chi_{U}.\]
Note that if $\nu\in\Prob(X)$ and $\delta>0$ with
\[\left|\int_{X}f\,d\mu-\int_{X}f\,d\nu\right|<\delta,\]
we have
\[\nu(U)>1-\eta-\delta.\]
We now see that for all finite $L\subseteq\mathcal{L},F\subseteq\Gamma,$ $0<\delta<\eta$ we have
\[\Map^{U,\eta+\delta}(\Delta,F,\delta,L,\sigma_{i})\supseteq \Map(\Delta,F,\delta,L\cup\{f\},\sigma_{i}).\]
Thus for all $\delta<\eta,$ and all  finite $F'\supseteq F,L'\supseteq L\cup\{f\}<$ and $\delta'<\delta,$
\[ h_{\Sigma,\mu}(\Delta,\varepsilon,F',\delta',L')\leq h_{\Sigma,\mu}^{U,2\eta}(\Delta,\varepsilon,F,\delta,L).\]
Taking the infimum over $F',\delta',L'$ we find
\[h_{\Sigma,\mu}(\Delta,\varepsilon)\leq h_{\Sigma,\mu}^{U,2\eta}(\Delta,\varepsilon,F,\delta,L).\]
Taking the infimum over all $U\supseteq K$ we find that
\[h_{\Sigma,\mu}(\Delta,\varepsilon)\leq h_{\Sigma,\mu}^{K,2\eta}(\Delta,\varepsilon,F,\delta,L).\]
Now take the supremum over all $K$ and then the infimum over all $\eta,\delta,F,L\subseteq \mathcal{L}$ to find that
\[h_{\Sigma,\mu}(\Delta,\varepsilon)\leq h_{\Sigma,\mu}(\Delta,\varepsilon,\mathcal{L}) .\]

We now show that
\[h_{\Sigma,\mu}(\Delta,\varepsilon,\mathcal{L})\leq h_{\Sigma,\mu}(\Delta,\varepsilon).\]
Let $A>0$ be as in the definition of generating. Let $F\subseteq \Gamma$ and $L\subseteq C_{b}(X)$ be given finite sets and $\delta>0.$ Let $\kappa>0$ be sufficiently small depending upon $F,L,\delta$ in a manner to be determined later. Since $X$ is Polish, we may find a compact $K_{0}\subseteq X$ so that
\[\mu(K_{0})\geq 1-\kappa.\]
 Let $K\subseteq X$ compact be given with $K\supseteq K_{0}.$   Let $L=\{f_{1},\dots,f_{l}\}.$ For $1\leq j\leq l$ we can find a $g_{j}\in \Span(\mathcal{L})$ so that
\[\|g_{j}\big|_{K}-f_{j}\big|_{K}\|<\delta\]
and
\[\|g_{j}\|\leq A\|f_{j}\|.\]
We may find an open neighborhood $U_{0}$ of $K$ so that
\[\|g_{j}\big|_{U_{0}}-f_{j}\big|_{U_{0}}\|_{C_{b}(U_{0})}<\delta, \mbox{ for $j=1,\dots,l$.}\]
Let $\delta'>0$ and $F'\subseteq\Gamma$ and $L'\subseteq C_{b}(X)$ be finite sets which will depend upon $F,\delta,L$ in a manner to be determined shortly. Let $U$ be an open neighborhood of $K$ contained in $U_{0}.$ For $\phi\in\Map_{\mu}^{U,\eta}(\Delta,F',\delta',L',\sigma_{i})$ we have
\begin{align*}
\left|\int_{X}f_{j}\,d\mu-\int_{X}f_{j}\,d\phi_{*}(u_{d_{i}})\right|&\leq \left|\int_{X}f_{j}\,d\phi_{*}(u_{d_{i}})-\int_{X}g_{j}\,d\phi_{*}(u_{d_{i}})\right|+\left|\int_{X}g_{j}\,d\mu-\int_{X}g_{j}\,d\phi_{*}(u_{d_{i}})\right|\\
&+\left|\int_{X}g_{j}\,d\mu-\int_{X}f_{j}\,d\mu\right|\\
&\leq \eta(1+A)\|f_{j}\|+\kappa(1+A)\|f_{j}\|+\left|\int_{U}f_{j}\,d\phi_{*}(u_{d_{i}})-\int_{U}g_{j}\,d\phi_{*}(u_{d_{i}})\right|\\
&+\left|\int_{X}g_{j}\,d\mu-\int_{X}g_{j}\,d\phi_{*}(u_{d_{i}})\right|+\left|\int_{U}g_{j}\,d\mu-\int_{U}f_{j}\,d\mu\right|\\
&\leq \eta(1+A)\|f_{j}\|+\kappa(1+A)\|f_{j}\|+2\delta+\left|\int_{X}g_{j}\,d\mu-\int_{X}g_{j}\,d\phi_{*}(u_{d_{i}})\right|\\
\end{align*}

If we choose $\delta'>0$ sufficiently small, and $F',L'$ sufficiently large we may force
\[\left|\int_{X}g_{j}\,d\mu-\int_{X}g_{j}\,d\phi_{*}(u_{d_{i}})\right|<\delta.\]
If we choose
\[\max_{j}2\eta(1+A)\|f_{j}\|<\delta\]
\[\max_{j}\kappa(1+A)\|f_{j}\|<\delta\]
then we have
\[\Map^{U,\eta}(\Delta,F',\delta',L',\sigma_{i})\subseteq \Map(\Delta,F,7\delta,L,\sigma_{i}).\]
Thus
\[h_{\Sigma,\mu}^{K,\eta}(\Delta,\varepsilon,F',\delta',L')\leq h_{\Sigma,\mu}^{U,\eta}(\Delta,\varepsilon,F',\delta',L')\leq h_{\Sigma,\mu}(\Delta,\varepsilon,F,7\delta,L).\]
Since $K\supseteq K_{0}$ was arbitrary, we may take the supremum over all $K$ to see that
\[h_{\Sigma,\mu}(\Delta,\varepsilon,\mathcal{L})\leq h_{\Sigma,\mu}^{\eta}(\Delta,\varepsilon,F',\delta',L')\leq h_{\Sigma,\mu}(\Delta,\varepsilon,F,7\delta,L).\]
Now taking the infimum over all $F',\delta',L'$ proves that
\[h_{\Sigma,\mu}(\Delta,\varepsilon,\mathcal{L})\leq h_{\Sigma,\mu}(\Delta,\varepsilon).\]

\end{proof}

\section{Entropy of Gaussian Actions}

Gaussian actions are a natural class of actions induced by orthogonal representations of a group. When the representation is the left regular representation, the Gaussian action is simply the Bernoulli action on $(\RR,\nu)^{\Gamma}$ where $\nu$ is the Gaussian measure. In \cite{Me6}, we prove some results which give structural results of  the Koopman representation $\Gamma\actson L^{2}(X,\mu)$ of a probability measure-preserving action $\Gamma\actson (X,\mu)$ of a sofic group $\Gamma,$ under the assumption that this action has positive entropy. For example, we could show that if $\Gamma\actson (X,\mu)$ has positive entropy then $\Gamma\actson L^{2}(X,\mu)$ must contain a ``piece" of the left regular representation (for more precise statements see \cite{Me6} Theorem 1.1 and Corollary 1.2). We will exploit the connections to representation theory to apply our spectral consequences of positive entropy in \cite{Me6} to Gaussian actions. We will also exploit the similarity to Bernoulli shifts to compute the entropy of Gaussian actions.

Let $\Gamma$ be a countable discrete group. Recall that an orthogonal representation of $\Gamma$ on a real Hilbert space $\mathcal{H}$ is a homomorphism $\rho\colon\Gamma\to \mathcal{O}(\mathcal{H})$ where $\mathcal{O}(\mathcal{H})$ is the group of orthogonal transformations of $\mathcal{H}$ (i.e. the set of $O\in B(\mathcal{H})$ so that $\ip{O\xi,O\eta}=\ip{\xi,\eta}$ for $\xi,\eta\in\mathcal{H}.$) We let $\mathcal{H}_{\CC}=\mathcal{H}\otimes_{\RR}\CC=\mathcal{H}+i\mathcal{H}$ be the complexification of $\mathcal{H}$ equipped with the unique sesquilinear inner product extending the one on $\mathcal{H}.$ We let $\rho_{\CC}\colon\Gamma\to \mathcal{U}(\mathcal{H}+i\mathcal{H})$ be the complexification of $\rho,$ i.e. $\rho_{\CC}(g)$ for $g\in\Gamma$ is the unique unitary transformation so that $\rho_{\CC}(g)(\xi)=\rho(g)\xi$ for $\xi\in\mathcal{H}.$

\subsection{Definition of Gaussian Actions}\label{S:Gauss}

The most natural way to define Gaussian actions is by von Neumann algebras.
\begin{definition}\emph{Let $\mathcal{H}$ be a complex Hilbert space. A von Neumann algebra on $\mathcal{H}$ is a $*$-subalgebra of $B(\mathcal{H})$ containing the identity and closed in the weak operator topology. We say that a vector $\xi\in \mathcal{H}$ is} cyclic \emph{ for $M$ if $\overline{M\xi}=\mathcal{H}.$}\end{definition}
Suppose that $(X,\mu)$ is a standard probability space. For $f\in L^{\infty}(X,\mu),$ let $M_{f}\in B(L^{2}(X,\mu))$ be given by
\[(M_{f}\xi)(x)=f(x)\xi(x).\]
The map $f\mapsto M_{f}$ allows us to view $L^{\infty}(X,\mu)$ as a von Neumann algebra. It turns out (see \cite{ConwayOT} Theorem 14.5)  that if $M\subseteq B(\mathcal{H})$ is a commutative von Neumann algebra with cyclic vector $\xi$ with $\|\xi\|_{2}=1$ and $\mathcal{H}$ is separable, then there is a standard probability space $(X,\mu)$ and a unitary
\[U\colon \mathcal{H}\to L^{2}(X,\mu)\]
so that
\[U(\xi)=1\]
\[UMU^{*}=\{M_{f}:f\in L^{\infty}(X,\mu)\}.\]
Additionally, if $\phi\colon M\to \CC$ is a linear functional so that
\[\phi\big|_{\{T\in M:\|T\|\leq 1\}}\]
is weak operator topology continuous, then there is a complex measure $\nu\ll\mu$ so that if $UTU^{*}=M_{f}$ then
\[\phi(T)=\int_{X}f\,d\nu.\]
We leave it to reader the verify that if $UTU^{*}=M_{f}$ then
\[\ip{T\xi,\xi}=\int_{X}f\,d\mu.\]

\begin{definition}\emph{Let $\mathcal{H}$ be a real Hilbert space. The Gaussian algebra associated to $\mathcal{H},$ denoted $\mathcal{A}(\mathcal{H})$ is a commutative von Neumann algebra with cyclic vector $\Omega$ with $\|\Omega\|=1,$ which is generated by unitaries $\{u(\xi):\xi\in\mathcal{H}\}$  satisfying}
\[u(\xi_{1}+\xi_{2})=u(\xi_{1})u(\xi_{2})\mbox{\emph{ for $\xi\in\mathcal{H}$}}\]
\[\ip{u(\xi)\Omega,\Omega}=\exp(-\pi\|\xi\|^{2}).\]
\emph{ For $a\in\mathcal{A}(\mathcal{H}),$ we let}
\[\phi(a)=\ip{a\Omega,\Omega}.\]
\emph{Suppose that $\Gamma$ is a countable discrete group and $\rho\colon\Gamma\to \mathcal{O}(\mathcal{H})$ is a representation. Then there is a $\phi$-preserving action $\alpha$ on $\mathcal{A}(\mathcal{H})$ determined uniquely by}
\[\alpha(g)(u(\xi))=u(\rho(g)\xi),\mbox{ for $\xi\in\mathcal{H}$.}\]
\emph{This action is called the Gaussian action.}\end{definition}
By \cite{PetersonSinclair} the Gaussian algebra exists and is unique up to $\phi$-preserving isomorphism. See also \cite{PetersonSinclair} for the existence and uniqueness of the Gaussian action.

	Let us sketch an alternate construction of the Gaussian algebra. First consider the case that $\mathcal{H}$ is finite-dimensional. Say $\mathcal{H}=\RR^{n}.$ In this case we can simply take $\mathcal{A}(\mathcal{H})=L^{\infty}(\RR^{n},\nu)$ where $d\nu=e^{-\pi\|x\|_{\ell^{2}(n)}^{2}}\,dx,$ and
\[\phi(a)=\int_{\RR^{n}}a(x)\,d\nu(x),\]
\[u(\xi)(x)=\exp(2\pi i \ip{\xi,x}).\]
 In general, consider the universal $*$-algebra $\widetilde{\mathcal{A}}(\mathcal{H})$ generated by unitaries $\widetilde{u}(\xi),\xi\in \mathcal{H}$ satisfying the relation $\widetilde{u}(\xi_{1}+\xi_{2})=\widetilde{u}(\xi_{1})\widetilde{u}(\xi_{2})$ for $\xi_{1},\xi_{2}\in\mathcal{H}.$ Here one can make sense of a unitary in a $*$-algebra by saying that it is an element $u$ so that $u^{*}u=uu^{*}=1.$ One then has to check that there is a well-defined linear function $\widetilde{\phi}\colon \widetilde{\mathcal{A}}(\mathcal{H})\to \CC$ defined by
\[\widetilde{\phi}(\widetilde{u}(\xi))=\exp(-\pi\|\xi\|^{2}).\]
Checking that $\widetilde{\phi}$ is well-defined reduces to showing that if we take $\xi_{1},\dots,\xi_{n}$,$\zeta_{1},\dots,\zeta_{m}\in \mathcal{H}$ and $\lambda_{1},\dots,\lambda_{n}$, $c_{1},\dots,c_{m}\in \CC$ with
\[\sum_{j=1}^{n}\lambda_{j}\widetilde{u}(\xi_{j})=\sum_{i=1}^{m}c_{j}\widetilde{u}(\zeta_{j}),\]
then
\[\sum_{j=1}^{n}\lambda_{j}\exp(-\pi\|\xi_{j}\|^{2})=\sum_{j=1}^{m}c_{j}\exp(-\pi\|\zeta_{j}\|^{2}).\]
 Replacing $\mathcal{H}$ with the span of $\{\xi_{1},\cdots,\xi_{n},\zeta_{1},\cdots,\zeta_{m}\}$ reduces the verification that $\widetilde{\phi}$ is well-defined to the case that $\mathcal{H}$ is finite-dimensional, where we have already shown that $\widetilde{\phi}$ is well-defined (by explicitly exhibiting it as integration with respect to the Gaussian measure as explained above). By similar reasoning, one checks that $\widetilde{\phi}(a^{*}a)\geq 0$ for $a\in \widetilde{\mathcal{A}}(\mathcal{H}).$ One now runs the GNS construction. That is, we let $\mathcal{K}$ be the completion of $\widetidle{\mathcal{A}}(\mathcal{H})$ under the inner product
\[\ip{a,b}=\widetilde{\phi}(b^{*}a).\]
We then have an induced homomorphism $\rho\colon\widetilde{\mathcal{A}}(\mathcal{H})\to B(\mathcal{K})$ given by
\[(\rho(a))(b)=ab\mbox{ for $b\in \widetilde{\mathcal{A}}(\mathcal{H})$}.\]
We then let $\mathcal{A}(\mathcal{H})=\overline{\rho(\widetilde{\mathcal{A}}(\mathcal{H}))}^{SOT}$, and  $\Omega=\widetilde{u}(0),u(\xi)=\rho(\widetilde{u}(\xi)).$ It is then straightforward to verify that $\mathcal{A}(\mathcal{H}),\Omega$ have the desired properties.

	By our remarks before this definition if $\rho,\Gamma$ are as in the definition then there is a standard probability space $(X_{\rho},\mu_{\rho})$ and a measure-preserving action $\Gamma\actson (X_{\rho},\mu_{\rho})$ so that
\[\Gamma\actson L^{\infty}(X_{\rho},\mu_{\rho})\cong \Gamma\actson (\mathcal{A}(\mathcal{H}),\phi).\]
Furthermore, this action is uniquely determined up to isomorphism.
 Note that for two orthogonal representations
\[\rho_{j}\colon\Gamma\to\mathcal{O}(\mathcal{H}_{j}),j=1,2\]
we have
\[\Gamma\actson (X_{\rho_{1}\oplus \rho_{2}},\mu_{\rho_{1}\oplus \rho_{2}})\cong (X_{\rho_{1}}\times X_{\rho_{2}},\mu_{\rho_{1}}\otimes \mu_{\rho_{2}}).\]
The definition via von Neumann algebras may be abstract, so let us mention a simple version of the definition in the case of a cyclic representation. Recall that if $t\in c_{c}(\Gamma,\RR)$ and $x\in \RR^{\Gamma}$ we use
\[t\cdot x=\sum_{g\in\Gamma}t(g)x(g).\]

\begin{proposition}\label{P:cyclic} Let $\Gamma$ be a countable discrete group and $\rho\colon \Gamma\to \mathcal{O}(\mathcal{H})$ an orthogonal representation. Suppose that there is a vector $\xi\in\mathcal{H}$ so that $\mathcal{H}=\overline{\Span\{\rho(g)\xi:g\in\Gamma\}}.$ Then the Gaussian action is isomorphic to the shift action on $\RR^{\Gamma}$ with measure determined by
\[\int_{\RR^{\Gamma}}\exp(2\pi i t\cdot x)\,d\mu_{\xi}(x)=\exp\left(-\pi\left\|\sum_{g\in\Gamma}t(g)\rho(g)\xi\right\|^{2}\right)\mbox{ for all $t\in c_{c}(\Gamma,\RR)$}.\]
\end{proposition}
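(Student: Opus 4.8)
The plan is to reduce the claim to the universal property of the Gaussian algebra $\mathcal{A}(\mathcal{H})$ together with the identification of commutative von Neumann algebras with $L^\infty$ of a standard probability space recalled just before the definition. Since $\xi$ is a cyclic vector for $\rho$, the map $g\mapsto \rho(g)\xi$ generates $\mathcal H$, and consequently the unitaries $\{u(\rho(g)\xi):g\in\Gamma\}$ generate $\mathcal A(\mathcal H)$ as a von Neumann algebra (this needs a short argument: the von Neumann algebra generated by $\{u(\eta):\eta\in\mathcal H\}$ coincides with the one generated by $\{u(\eta):\eta\in V\}$ for any dense subspace $V$, because $\eta\mapsto u(\eta)$ is continuous from $\mathcal H$ with the norm topology to $\mathcal A(\mathcal H)$ with, say, the strong operator topology — this continuity follows from $\|u(\eta_1)\Omega-u(\eta_2)\Omega\|^2 = 2-2\exp(-\pi\|\eta_1-\eta_2\|^2)\to 0$ and the fact that $\Omega$ is cyclic and separating). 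Hence $\Omega$ is cyclic for the abelian von Neumann algebra generated by $\{u(\rho(g)\xi):g\in\Gamma\}$.

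Next I would build the isomorphism with the shift on $\mathbb R^\Gamma$. First one checks that the measure $\mu_\xi$ on $\mathbb R^\Gamma$ specified by the stated Fourier-transform formula actually exists: the prescribed function $t\mapsto \exp(-\pi\|\sum_g t(g)\rho(g)\xi\|^2)$ on $c_c(\Gamma,\mathbb R)$ is positive-definite (it is a Gaussian form in the finitely many variables $t(g)$, $g\in \supp t$, with covariance matrix $(\langle\rho(g)\xi,\rho(h)\xi\rangle)_{g,h}$, which is positive semidefinite), normalized to $1$ at $t=0$, and continuous on each finite-dimensional subspace; so Bochner/Kolmogorov (or Minlos, but finite-dimensional Bochner on each coordinate subspace plus Kolmogorov extension suffices here since $\mathbb R^\Gamma$ carries the product topology) produces a unique Borel probability measure $\mu_\xi$ on $\mathbb R^\Gamma$ with that characteristic function. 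The shift action of $\Gamma$ on $\mathbb R^\Gamma$, $(g\cdot x)(h)=x(g^{-1}h)$, preserves $\mu_\xi$: substituting into the characteristic function turns $t$ into the translate $t'$ with $t'(h)=t(g^{-1}h)$, and $\sum_h t'(h)\rho(h)\xi = \rho(g)\sum_h t(h)\rho(h)\xi$ has the same norm by orthogonality of $\rho(g)$.

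Then I would define the map on generators. Inside $L^\infty(\mathbb R^\Gamma,\mu_\xi)$ sit the unitaries $v(g):x\mapsto \exp(2\pi i\, x(g))$, and more generally $v_t := \exp(2\pi i\, t\cdot x)$ for $t\in c_c(\Gamma,\mathbb R)$; these generate $L^\infty(\mathbb R^\Gamma,\mu_\xi)$ as a von Neumann algebra (the coordinate functions generate the Borel $\sigma$-algebra of $\mathbb R^\Gamma$, hence their bounded Borel functions, hence in particular the exponentials, are weak-$*$ dense). The assignment $u\big(\sum_g t(g)\rho(g)\xi\big)\mapsto v_t$ is the candidate isomorphism: it is well defined and extends to a $\phi$-preserving $*$-isomorphism of the ambient commutative von Neumann algebras precisely because both sides are the $L^2$-closures of $*$-algebras generated by unitaries indexed by the dense subspace $\Span\{\rho(g)\xi\}$, with the \emph{same} inner products — indeed $\langle u(\eta_1),u(\eta_2)\rangle_{\phi} = \exp(-\pi\|\eta_1-\eta_2\|^2)$ matches $\int \exp(2\pi i(t_1-t_2)\cdot x)\,d\mu_\xi = \exp(-\pi\|\sum_g(t_1-t_2)(g)\rho(g)\xi\|^2)$ whenever $\eta_j=\sum_g t_j(g)\rho(g)\xi$. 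So the GNS spaces are canonically unitarily equivalent and the equivalence intertwines the two sets of generating unitaries, hence the two von Neumann algebras and the two states. Finally, this isomorphism intertwines the actions: $\alpha(g)$ sends $u(\sum_h t(h)\rho(h)\xi) = u(\sum_h t(h)\rho(gh)\xi\cdot\text{(reindex)})$ — more cleanly, $\alpha(g)u(\eta)=u(\rho(g)\eta)$ and $\rho(g)\sum_h t(h)\rho(h)\xi = \sum_h t(h)\rho(gh)\xi$ which corresponds to the shifted $t$, exactly matching the shift on $\mathbb R^\Gamma$ computed above.

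The main obstacle is not any single deep step but the careful bookkeeping that the naive map on generators is genuinely well defined and isometric at the von Neumann algebra level: one must argue that linear dependence relations among the $u(\eta)$'s correspond exactly to linear dependence relations among the $v_t$'s, which is what the matching of $\phi$-inner products gives, and then that passing to weak-operator closures is legitimate — i.e., that $\{v_t : t\in c_c(\Gamma,\mathbb R)\}''= L^\infty(\mathbb R^\Gamma,\mu_\xi)$ and that $\Omega=\widetilde u(0)$ corresponds to the constant function $1$. These are the points I would write out in detail; everything else (existence of $\mu_\xi$ via Bochner–Kolmogorov, shift-invariance, equivariance) is a direct computation with the characteristic function.
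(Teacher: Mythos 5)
Your proposal is correct, but it takes a genuinely different route from the paper's proof of Proposition \ref{P:cyclic}. The paper starts from the abstract realization $\Gamma\actson L^{\infty}(X_{\rho},\mu_{\rho})\cong\Gamma\actson(\mathcal{A}(\mathcal{H}),\phi)$ and applies Stone's theorem to the strongly continuous one-parameter groups $s\mapsto u(s\zeta)$ to extract real-valued measurable functions $\omega(\zeta)$ on $X_{\rho}$; these are assembled into an explicit equivariant map $\Phi\colon X_{\rho}\to\RR^{\Gamma}$, $\Phi(x)(g)=\omega(\xi)(g^{-1}x)$, and $\mu_{\xi}$ is obtained as the pushforward $\Phi_{*}\mu_{\rho}$, with the characteristic function computed only at the very end. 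You instead construct $\mu_{\xi}$ first, directly on $\RR^{\Gamma}$, via Bochner--Kolmogorov (positive semidefiniteness of the Gram matrix $(\ip{\rho(g)\xi,\rho(h)\xi})_{g,h}$ on each finite coordinate subspace), verify shift-invariance from the characteristic function, and then identify the two actions at the von Neumann algebra level by matching the moments of the generating unitaries $u\left(\sum_{g}t(g)\rho(g)\xi\right)$ and $v_{t}=\exp(2\pi i\,t\cdot x)$ and invoking GNS uniqueness; cyclicity of $\xi$ enters in both arguments in the same way, to see that these unitaries generate everything. Both routes are sound: your well-definedness worry is settled by the $L^{2}$-computation $\|v_{t}-v_{s}\|_{L^{2}(\mu_{\xi})}^{2}=2-2\exp\left(-\pi\left\|\sum_{g}(t-s)(g)\rho(g)\xi\right\|^{2}\right)$, which vanishes precisely when the two vectors coincide. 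The paper's approach buys the explicit coordinate functions $\omega(\zeta)$, which are reused later (in the proof of Corollary \ref{C:zerogaussian}); yours avoids Stone's theorem and the identification of unbounded self-adjoint operators with measurable functions, at the price of the density and well-definedness bookkeeping you flag, and it exhibits $\mu_{\xi}$ directly as a projective limit of finite-dimensional Gaussians, which is closer in spirit to the finite-dimensional approximations of Lemma \ref{L:concentration}. One point to make explicit when writing it up: your state-preserving, equivariant $*$-isomorphism of commutative von Neumann algebras must still be converted into a point isomorphism of probability measure-preserving actions; for standard probability spaces this is exactly the content of the discussion preceding the definition of the Gaussian algebra, so it is available, but it should be cited rather than left implicit.
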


\begin{proof}
Choose a realization $\Gamma\actson (\mathcal{A}(\mathcal{H}),\phi)\cong \Gamma\actson L^{\infty}(X_{\rho},\mu_{\rho}).$
By \cite{Taka} Proposition 5.3,  a sequence $a_{n}\in \mathcal{A}(\mathcal{H})$ converges in the strong operator topology to $a\in \mathcal{A}(\mathcal{H})$ if and only if
\[\phi((a-a_{n})^{*}(a-a_{n}))\to 0.\]
 Thus, $t\mapsto u(t\xi)$ is a strongly continuous one-parameter group in $\mathcal{A}(\mathcal{H}).$ Fix $\zeta\in \mathcal{H}.$  By Stone's Theorem there is a closed, densely-defined self-adjoint operator $\omega(\zeta)$ on $L^{2}(X_{\rho},\mu_{\rho})$
so that for all $s\in \RR$
\[\exp(2\pi i s\omega(\zeta))=u(s\zeta),\]
in the sense of functional calculus. Recall that $L^{\infty}(X_{\rho},\mu_{\rho})$ can be viewed as a von Neumann algebra (by multiplication operators). It is not hard to see that the elements $u\in L^{\infty}(X_{\rho},\mu_{\rho})$ whose multiplication operators are unitaries are almost everywhere equal to measurable functions
\[u\colon X\to \{z\in \CC:|z|=1\}.\]
 Thus we can identify $u(s\zeta)$ as a measurable function $X_{\rho}\to \{z\in \CC:|z|=1\}.$ For similar reasons, we can identify $\omega(\zeta)$ with a measurable function $X_{\rho}\to \RR.$ Thus for every $s\in\RR,$ it is true that for almost every $x\in X_{\rho}$ we have
\[\exp(2\pi i s\omega(\zeta)(x))=u(s\zeta)(x).\]
Define
\[\Phi\colon X_{\rho}\to \RR^{\Gamma}\]
by
\[\Phi(x)(g)=\omega(\xi)(g^{-1}x).\]
Then it is not hard to see that for every $s\in\RR$ and for all $g\in\Gamma$ we have
\[\exp(2\pi i s \Phi(x)(g))=u(s\rho(g)\xi)(x)\]
for almost every $x\in X_{\rho}.$ Define $\mu_{\xi}=\Phi_{*}\mu_{\rho}.$ Since the unitaries of the form
\[u(s\rho(g)\xi),s\in\RR,g\in\Gamma\]
generate all of $\mathcal{A}(\mathcal{H})$ we see that $\Phi$ gives an $\Gamma$-equivariant isomorphism
\[(\RR^{\Gamma},\mu_{\xi})\cong (X_{\rho},\mu_{\rho}).\]
Additionally for all $t\in c_{c}(\Gamma,\RR)$
\begin{align*}
\int_{\RR^{\Gamma}}\exp(2\pi i t\cdot x)\,d\mu_{\xi}(x)&=\int_{X}\exp\left(2\pi i \sum_{g\in\Gamma}t(g)\Phi(x)(g)\right)\,d\mu_{\rho}(g)\\
&=\phi\left(\omega\left(\sum_{g\in\Gamma}t(g)\rho(g)\xi\right)\right)\\
&=\exp\left(-\pi\left\|\sum_{g\in\Gamma}t(g)\rho(g)\right\|^{2}\right).
\end{align*}
\end{proof}

\subsection{Preliminaries on the Group von Neumann Algebra and Embedding Sequences}

For our purposes we will need to ``linearize'' a sofic approximation to approximations of algebras associated to $\Gamma.$ Let $\CC(\Gamma)$ be the ring of finite formal linear combinations of elements of $\Gamma$ with addition defined naturally and multiplication defined by
\[\left(\sum_{g\in\Gamma}a_{g}g\right)\left(\sum_{h\in\Gamma}b_{h}h\right)=\sum_{g\in\Gamma}\left(\sum_{h\in\Gamma}a_{h}b_{h^{-1}g}\right)g.\]
We will also define a conjugate-linear involution on $\CC(\Gamma)$ by
\[\left(\sum_{g\in\Gamma}a_{g}g\right)^{*}=\sum_{g\in\Gamma}\overline{a_{g^{-1}}}g.\]

Given a sofic approximation $\Sigma=(\sigma_{i}\colon\Gamma\to S_{d_{i}})$ and $\alpha=\sum_{g\in\Gamma}\alpha_{g}g\in \CC(\Gamma)$ we define $\sigma_{i}(\alpha)\in M_{d_{i}}(\CC)$ by
\[\sigma_{i}(\alpha)=\sum_{g\in\Gamma}\alpha_{g}\sigma_{i}(g).\]
In order to talk about the asymptotic properties of this extended sofic approximation, we will need a more analytic object associated to $\Gamma.$

Let $\lambda\colon\Gamma\to \mathcal{U}(\ell^{2}(\Gamma))$ be  the left regular representation defined by $(\lambda(g)\xi)(h)=\xi(g^{-1}h).$ We will continue to use $\lambda$ for the linear extension to $\CC(\Gamma)\to B(\ell^{2}(\Gamma)).$ The \emph{group von Neumann algebra} of $\Gamma$ is defined by
\[\overline{\lambda(\CC(\Gamma))}^{WOT}\]
where WOT denotes the weak operator topology. We will use $L(\Gamma)$ to denote the group von Neumann algebra. Define $\tau\colon L(\Gamma)\to \CC$ by
\[\tau(x)=\ip{x\delta_{e},\delta_{e}}.\]
We leave it as an exercise to the reader to verify that $\tau$ has the following properties.

\begin{list}{ \arabic{pcounter}:~}{\usecounter{pcounter}}
\item $\tau(1)=1,$\\
\item $\tau(x^{*}x)\geq 0,$  with equality if and only if $x=0$,\\
\item $\tau(xy)=\tau(yx),$  for all $x,y\in M,$
\item $\tau$ is weak operator topology continuous.
\end{list}
We call the third property the tracial property. We will typically view $\CC(\Gamma)$ as a subset of $L(\Gamma).$ In particular, we will use $\tau$ as well for the functional on $\CC(\Gamma)$ which is just the restriction of $\tau$ on $L(\Gamma).$

 In order to state our linearization of a sofic approximation properly, we give a general definition. By definition, $*$-algebra is  a complex algebra equipped with an involution $*$ which is conjugate linear and antimultiplicative.

\begin{definition}\emph{ A} tracial $*$-algebra \emph{ is a pair $(A,\tau)$ where $A$ is a $*$-algebra equipped with a linear functional $\tau\colon A\to \CC$ so that}
\begin{list}{ \arabic{pcounter}:~}{\usecounter{pcounter}}
\item $\tau(1)=1,$\\
\item $\tau(x^{*}x)\geq 0,$  \emph{with equality if and only if $x=0$},\\
\item $\tau(xy)=\tau(yx),$  \emph{for all $x,y\in M,$}
\item \emph{For all $a\in A,$ there is a $C_{a}>0$ so that for all $x\in A,$ $|\tau(x^{*}a^{*}ax)|\leq C_{a}\tau(x^{*}x).$}
\end{list}
\emph{ For $a,b\in A$ we let $\ip{a,b}=\tau(b^{*}a)$ and we let $\|a\|_{2}=\tau(a^{*}a)^{1/2}.$ Define $L^{2}(A,\tau)$ to be the Hilbert space completion of $A$ in this inner product. By condition $4$ of the definition, we have a representation $\lambda\colon A\to B(L^{2}(A,\tau))$ defined densely by $\lambda(a)x=ax$ for $x\in A.$ We let $\|a\|_{\infty}=\|\lambda(a)\|.$}
\end{definition}

We make $M_{n}(\CC)$ into a tracial $*$-algebra using $\tr=\frac{1}{n}\Tr$ where $\Tr$ is the usual trace. In particular, we use $\|A\|_{2}=\tr(A^{*}A)^{1/2}$ and $\|A\|_{\infty}$ will denote the operator norm.

We let $\CC[X_{1},\dots,X_{n}]$ be the free $*$-algebra on $n$-generators $X_{1},\dots,X_{n}.$  We will call elements of $\CC[X_{1},\dots,X_{n}]$ $*$-polynomials in $n$ indeterminates. For a $*$-algebra $A,$ for elements $a_{1},\dots,a_{n}\in A,$ there is a unique $*$-homomorphism $\CC[X_{1},\dots,X_{n}]\to A$ sending $X_{j}$ to $a_{j}$. If $P\in \CC[X_{1},\dots,X_{n}]$ we use $P(a_{1},\dots,a_{n})$ for the image of $P$ under this $*$-homomorphism.

\begin{definition}\emph{ Let $(A,\tau)$ be a tracial $*$-algebra. An} embedding sequence \emph{is a sequence $\Sigma=(\sigma_{i}\colon A\to M_{d_{i}}(\CC))$ such that}
\[\sup_{i}\|\sigma_{i}(a)\|_{\infty}<\infty,\mbox{ \emph{for all $a\in A$}}\]
\[\|P(\sigma_{i}(a_{1}),\dots,\sigma_{i}(a_{n}))-\sigma_{i}(P(a_{1},\dots,a_{n}))\|_{2}\to 0, \mbox{\emph{ for all $a_{1},\dots,a_{n}\in A$ and all $P\in \CC[X_{1},\dots,X_{n}]$}}\]
\[\tr(\sigma_{i}(a))\to \tau(a)\mbox{\emph{ for all $a\in A$}.}\]
\end{definition}

We will frequently use the following fact: if $x_{1},\dots,x_{n}\in A$ and $P\in \CC[X_{1},\dots,X_{n}]$ then
\begin{equation}\label{E:L2convergence}
\|P(\sigma_{i}(x_{1}),\dots,\sigma_{i}(x_{n}))\|_{2}\to \|P(x_{1},\dots,x_{n})\|_{2}.
\end{equation}
To see this, we notice that since
\[\|P(\sigma_{i}(x_{1}),\dots,\sigma_{i}(x_{n}))-\sigma_{i}(P(x_{1},\dots,x_{n}))\|_{2}\to 0,\]
it suffices to handle the case $n=1$  and $P(X)=X.$ Then,
\[\|\sigma_{i}(x)\|_{2}^{2}=\tr(\sigma_{i}(x)^{*}\sigma_{i}(x))\]
and since $\|\sigma_{i}(x)^{*}\sigma_{i}(x)-\sigma_{i}(x^{*}x)\|_{2}\to 0$ we have
\[|\tr(\sigma_{i}(x)^{*}\sigma_{i}(x))-\tr(\sigma_{i}(x^{*}x))|\to 0.\]
As
\[\tr(\sigma_{i}(x^{*}x))\to \|x\|_{2}^{2},\]
we have proved $(\ref{E:L2convergence}).$

The proof of the next two propositions will be left to the reader.
\begin{proposition}\label{P:soficextension} Let $\Gamma$ be a countable discrete sofic group with sofic approximation $\Sigma=(\sigma_{i}\colon \Gamma\to S_{d_{i}}).$ Extend $\Sigma$ to maps $\sigma_{i}:\CC(\Gamma)\to M_{d_{i}}(\CC)$ linearly. Then $\Sigma$ is an embedding sequence of $(\CC(\Gamma),\tau).$
\end{proposition}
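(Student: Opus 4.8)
The plan is to verify directly the three defining conditions of an embedding sequence for the linearly extended maps $\sigma_i\colon\CC(\Gamma)\to M_{d_i}(\CC)$, taking advantage of the fact that each $\sigma_i(g)$ is an honest permutation matrix. The two ``non-multiplicative'' conditions are immediate. Since $\|\sigma_i(g)\|_\infty=1$ for every $g$, the triangle inequality gives $\|\sigma_i(a)\|_\infty\le\sum_{g\in\Gamma}|a_g|$ for $a=\sum_g a_g g\in\CC(\Gamma)$, a bound independent of $i$. For the trace condition, $\tr(\sigma_i(g))=u_{d_i}(\{k:\sigma_i(g)(k)=k\})$ is the normalized number of fixed points of $\sigma_i(g)$. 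Applying the first soficity axiom with $g=h=e$ gives $\sigma_i(e)^2(k)=\sigma_i(e)(k)$, hence (as $\sigma_i(e)$ is a bijection) $\sigma_i(e)(k)=k$, for a fraction $1-o(1)$ of $k$; combining this with the second axiom applied to the pair $(g,e)$ shows that $\sigma_i(g)$ fixes a fraction $1-o(1)$ of the points when $g=e$ and a fraction $o(1)$ when $g\ne e$. Thus $\tr(\sigma_i(g))\to\tau(g)$, and linearity gives $\tr(\sigma_i(a))\to\tau(a)$ for all $a\in\CC(\Gamma)$. The same computation records the useful fact that $\|\sigma_i(e)-I\|_2\to 0$.

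Next I would show that $\sigma_i$ is asymptotically a $*$-homomorphism. For permutation matrices $P,Q$ one has the elementary identity $\|P-Q\|_2^2=2\,u_{d_i}(\{k:P(k)\ne Q(k)\})$, so the two soficity axioms translate at once into $\|\sigma_i(g)\sigma_i(h)-\sigma_i(gh)\|_2\to 0$ for all $g,h\in\Gamma$, and — taking $h=g^{-1}$, using $\|\sigma_i(e)-I\|_2\to 0$, and the fact that the adjoint of a permutation matrix is its inverse — $\|\sigma_i(g)^*-\sigma_i(g^{-1})\|_2\to 0$. Expanding $a,b\in\CC(\Gamma)$ over their finite supports and summing, these upgrade to $\|\sigma_i(a)\sigma_i(b)-\sigma_i(ab)\|_2\to 0$ and $\|\sigma_i(a)^*-\sigma_i(a^*)\|_2\to 0$.

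Finally I would prove the remaining condition, namely $\|P(\sigma_i(a_1),\dots,\sigma_i(a_n))-\sigma_i(P(a_1,\dots,a_n))\|_2\to 0$, by structural induction on the $*$-polynomial $P$. The cases $P=X_j$ and $P$ a constant $c$ are immediate (for the latter via $\|cI-c\,\sigma_i(e)\|_2\to 0$), and the assertion is additive in $P$. For a product $P=P_1P_2$ (and similarly for an adjoint), put $A_i=P_1(\sigma_i(a_1),\dots,\sigma_i(a_n))$ and $B_i=P_2(\sigma_i(a_1),\dots,\sigma_i(a_n))$; by induction $\|A_i-\sigma_i(P_1(a_1,\dots,a_n))\|_2\to 0$ and likewise for $B_i$, while $\sup_i\|A_i\|_\infty$ and $\sup_i\|B_i\|_\infty$ are finite, being dominated by the values of $P_1,P_2$ with coefficients replaced by their moduli and evaluated at the numbers $\|\sigma_i(a_j)\|_\infty\le\sum_g|a_{j,g}|$. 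Using $\|XY\|_2\le\|X\|_\infty\|Y\|_2$ and $\|XY\|_2\le\|X\|_2\|Y\|_\infty$ one gets $\|A_iB_i-\sigma_i(P_1(\bar a))\sigma_i(P_2(\bar a))\|_2\to 0$, and the previous paragraph supplies $\|\sigma_i(P_1(\bar a))\sigma_i(P_2(\bar a))-\sigma_i(P_1(\bar a)P_2(\bar a))\|_2\to 0$; adding these completes the step. The argument is entirely elementary, and the only point requiring genuine care is the bookkeeping in this last induction — maintaining uniform $\|\cdot\|_\infty$ control on the intermediate evaluations and remembering that $\sigma_i$ is only asymptotically, not exactly, unital and multiplicative — but none of it is deep.
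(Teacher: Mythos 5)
Your proof is correct: the verification of the operator-norm bound and the trace condition, the reduction of the asymptotic $*$-homomorphism property to group elements via the identity $\|P-Q\|_{2}^{2}=2\,u_{d_{i}}(\{k:P(k)\ne Q(k)\})$, and the structural induction on $*$-polynomials with uniform $\|\cdot\|_{\infty}$ control are all sound. The paper explicitly leaves this proposition to the reader, and your argument is exactly the routine verification that is intended.
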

\begin{proposition}\label{P:perturbation} Let $(A,\tau)$ be a tracial $*$-algebra and $\Sigma=(\sigma_{i}\colon A\to M_{d_{i}}(\CC))$ be an embedding sequence. If $\Sigma'=(\sigma_{i}'\colon A\to M_{d_{i}}(\CC))$ is another sequence of functions so that
\[\sup_{i}\|\sigma_{i}'(a)\|_{\infty}<\infty,\mbox{ for all $a\in A,$}\]
\[\|\sigma_{i}(a)-\sigma_{i}'(a)\|_{2}\to 0,\mbox{ for all $a\in A,$}\]
then $\Sigma'$ is an embedding sequence.
\end{proposition}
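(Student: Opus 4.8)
The plan is to verify each of the three defining conditions for $\Sigma'$ to be an embedding sequence of $(A,\tau)$, using the fact that $\Sigma$ already satisfies them together with the two hypotheses relating $\Sigma'$ to $\Sigma$. The first condition, uniform boundedness of $\|\sigma_i'(a)\|_\infty$ in $i$, is exactly one of the hypotheses, so there is nothing to prove. For the trace condition, recall that on $(M_{d_i}(\CC),\tr)$ the Cauchy--Schwarz inequality gives $|\tr(B)| = |\ip{B,I}| \le \|B\|_2\|I\|_2 = \|B\|_2$ for every $B$; applying this to $B = \sigma_i(a) - \sigma_i'(a)$ and using $\|\sigma_i(a) - \sigma_i'(a)\|_2 \to 0$ shows $|\tr(\sigma_i(a)) - \tr(\sigma_i'(a))| \to 0$, whence $\tr(\sigma_i'(a)) \to \tau(a)$ since $\tr(\sigma_i(a)) \to \tau(a)$.

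The main step is the approximate multiplicativity condition. Fix $a_1,\dots,a_n \in A$, write $\vec a = (a_1,\dots,a_n)$, and let $P \in \CC[X_1,\dots,X_n]$. By the triangle inequality,
\[
\|P(\sigma_i'(\vec a)) - \sigma_i'(P(\vec a))\|_2 \le \|P(\sigma_i'(\vec a)) - P(\sigma_i(\vec a))\|_2 + \|P(\sigma_i(\vec a)) - \sigma_i(P(\vec a))\|_2 + \|\sigma_i(P(\vec a)) - \sigma_i'(P(\vec a))\|_2,
\]
where the second term tends to $0$ because $\Sigma$ is an embedding sequence, and the third tends to $0$ by the hypothesis $\|\sigma_i(b) - \sigma_i'(b)\|_2 \to 0$ applied to $b = P(\vec a) \in A$. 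So everything reduces to showing $\|P(\sigma_i'(\vec a)) - P(\sigma_i(\vec a))\|_2 \to 0$.

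For this I would expand $P$ into its finitely many $*$-monomials; by linearity it suffices to treat a single monomial $W = Y_1 \cdots Y_k$, each $Y_\ell$ being some $X_j$ or $X_j^{*}$. Writing $B_\ell$ for the corresponding factor formed from $\sigma_i'$ and $C_\ell$ for the one formed from $\sigma_i$, a standard telescoping estimate gives
\[
\|B_1\cdots B_k - C_1\cdots C_k\|_2 \le \sum_{\ell=1}^{k} \Big(\prod_{p<\ell}\|B_p\|_\infty\Big)\,\|B_\ell - C_\ell\|_2\,\Big(\prod_{p>\ell}\|C_p\|_\infty\Big),
\]
using $\|XY\|_2 \le \|X\|_\infty\|Y\|_2$ and $\|XY\|_2 \le \|X\|_2\|Y\|_\infty$. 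Here every $\|B_p\|_\infty$ and $\|C_p\|_\infty$ is bounded uniformly in $i$ (by the hypothesis on $\Sigma'$ and by $\Sigma$ being an embedding sequence, respectively), while $\|B_\ell - C_\ell\|_2$ equals $\|\sigma_i'(a_j) - \sigma_i(a_j)\|_2$ for the relevant $j$ --- one uses that the normalized Hilbert--Schmidt norm is unchanged by taking adjoints, so the case $Y_\ell = X_j^{*}$ is no different --- and this tends to $0$ by hypothesis. Summing over $\ell$ and then over the finitely many monomials of $P$ completes the verification.

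I do not anticipate a genuine obstacle; the one place to be careful is the telescoping estimate, where it is essential to invoke uniform operator-norm bounds for \emph{both} $\sigma_i$ and $\sigma_i'$ (so the products of $\|\cdot\|_\infty$'s stay bounded along $i$) and to note that adjoints do not affect the $\|\cdot\|_2$-norm, so that the $L^2$-smallness of $\sigma_i'(a) - \sigma_i(a)$ transfers to factors of the form $\sigma_i'(a)^{*} - \sigma_i(a)^{*}$.
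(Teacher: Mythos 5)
Your proof is correct and is precisely the standard argument intended here; the paper leaves this proposition to the reader, so there is no alternative proof to compare against. All three conditions are handled properly, and you correctly identify the two points that need care in the telescoping step (uniform $\|\cdot\|_{\infty}$ bounds for both sequences, and adjoint-invariance of the normalized Hilbert--Schmidt norm).
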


We will in fact need to extend our sofic approximation to the group von Neumann algebra. For this, we use the following.
\begin{lemma}[Lemma 5.5 in \cite{Me}] \label{L:soficextension}Let $\Gamma$ be a countable discrete group. Then any embedding sequence for $\CC(\Gamma)$ extends to one for $L(\Gamma).$
\end{lemma}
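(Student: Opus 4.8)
The plan is to transfer the embedding sequence to $L(\Gamma)$ in two moves — first to a countable weakly dense $*$-subalgebra by a diagonalization over the sofic index, then to all of $L(\Gamma)$ by a slow-approximation argument — and finally to adjust it on $\CC(\Gamma)$ so that it genuinely restricts to $\Sigma$. Three tools are used. (1) Kaplansky density: the operator-norm unit ball of $\CC(\Gamma)$ is $\|\cdot\|_{2}$-dense in the operator-norm unit ball of $L(\Gamma)$, so every $x\in L(\Gamma)$ is a $\|\cdot\|_{2}$-limit of $x_{n}\in\CC(\Gamma)$ with $\|x_{n}\|_{\infty}\le\|x\|_{\infty}$. (2) Separability: $L^{2}(L(\Gamma),\tau)=\ell^{2}(\Gamma)$ is separable and each operator-norm ball of $L(\Gamma)$ is $\|\cdot\|_{2}$-closed in it, so $L(\Gamma)$ is $\|\cdot\|_{2}$-separable; fix once and for all a countable $*$-subalgebra $\mathcal D\subseteq L(\Gamma)$ over $\QQ+i\QQ$ that contains the Gaussian-rational group ring and is $\|\cdot\|_{2}$-dense in every operator-norm ball of rational radius. (3) A matrix truncation device: if $y\in M_{d}(\CC)$ and the eigenvalue distribution $\nu$ of $y^{*}y$ (with respect to $\tr$) satisfies $\int t^{m}\,d\nu(t)\le c^{2m}$, then, with $p$ the spectral projection of $y^{*}y$ for $[0,c^{2}(1+\delta)^{2}]$, one has $\|yp\|_{\infty}\le c(1+\delta)$ and $\|y-yp\|_{2}^{2}=\int_{(c^{2}(1+\delta)^{2},\infty)}t\,d\nu(t)\le c^{2}(1+\delta)^{-2(m-1)}$, which is as small as desired once $m$ is large. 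One applies this to $y=\sigma_{i}(a)$ for $a\in\CC(\Gamma)$ with $\|a\|_{\infty}\le c$: since $\tr((\sigma_{i}(a)^{*}\sigma_{i}(a))^{m})\to\tau((a^{*}a)^{m})\le\|a\|_{\infty}^{2m}\le c^{2m}$ by the axioms of an embedding sequence together with $(\ref{E:L2convergence})$, for $i$ large the hypothesis holds and $\sigma_{i}(a)$ can be replaced by a matrix of operator norm $\le c(1+\delta)$ that is $\|\cdot\|_{2}$-close to it.

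First move: the extension on $\mathcal D$. Enumerate $\mathcal D=\{d_{1},d_{2},\dots\}$. At stage $k$, use Kaplansky to pick, for each of the finitely many $*$-polynomial expressions of complexity $\le k$ in $d_{1},\dots,d_{k}$, an approximant in $\CC(\Gamma)$ within $1/k$ in $\|\cdot\|_{2}$ of the corresponding element of $\mathcal D$ and of operator norm exceeding that element's by at most $1/k$; then truncate the $\sigma_{i}$-image of each such approximant at the level prescribed by tool (3). Using that $\Sigma$ is an embedding sequence of $\CC(\Gamma)$ and $(\ref{E:L2convergence})$, choose thresholds $i_{1}<i_{2}<\cdots$ so that for $i_{k}\le i<i_{k+1}$ all these truncated matrices are $1/k$-accurate with respect to the $*$-algebra relations, the traces, and the $\|\cdot\|_{2}$-norms among the stage-$k$ elements, and are within $1/k$ in $\|\cdot\|_{2}$ of the $\sigma_{i}$-images of stage-$k$ elements of the Gaussian-rational group ring; define $\tilde\sigma_{i}$ accordingly on those elements and $0$ elsewhere. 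The resulting $\tilde\Sigma=(\tilde\sigma_{i}\colon\mathcal D\to M_{d_{i}}(\CC))$ is an embedding sequence of $(\mathcal D,\tau)$ with $\sup_{i}\|\tilde\sigma_{i}(d)\|_{\infty}\le\|d\|_{\infty}+1$ for all $d\in\mathcal D$ and with $\|\tilde\sigma_{i}(d)-\sigma_{i}(d)\|_{2}\to0$ for $d$ in the Gaussian-rational group ring.

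Second move: the extension to $L(\Gamma)$. For $x\in L(\Gamma)$ fix, using Kaplansky and the density of $\mathcal D$ in operator-norm balls, elements $d^{(x)}_{k}\in\mathcal D$ with $\|d^{(x)}_{k}-x\|_{2}<1/k$ and $\|d^{(x)}_{k}\|_{\infty}\le\|x\|_{\infty}+1/k$, and choose $k(x,i)\to\infty$ growing slowly enough as $i\to\infty$ (depending on $x$) that whenever $i$ lies in the stage-$k(x,i)$ range from the first move, the element $d^{(x)}_{k(x,i)}$ and the polynomial expressions it enters are among that stage's controlled elements; set $\sigma'_{i}(x):=\tilde\sigma_{i}(d^{(x)}_{k(x,i)})$. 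Then $\sup_{i}\|\sigma'_{i}(x)\|_{\infty}\le\|x\|_{\infty}+2$, and $\tr(\sigma'_{i}(x))\to\tau(x)$ because $|\tr(\tilde\sigma_{i}(d^{(x)}_{k}))-\tau(d^{(x)}_{k})|\to0$ for each fixed $k$ while $|\tau(d^{(x)}_{k})-\tau(x)|\le\|d^{(x)}_{k}-x\|_{2}\to0$. For a relation $P(x_{1},\dots,x_{n})=y$ in $L(\Gamma)$, after a harmless reduction to $P$ with Gaussian-rational coefficients, one compares $P(\sigma'_{i}(x_{1}),\dots,\sigma'_{i}(x_{n}))$ to $\sigma'_{i}(y)$ by inserting the intermediate term $\tilde\sigma_{i}(P(d^{(x_{1})}_{k(x_{1},i)},\dots,d^{(x_{n})}_{k(x_{n},i)}))$: the first difference tends to $0$ in $\|\cdot\|_{2}$ because, by the slow-growth choices and the first-move bookkeeping, the relevant elements are eventually all in the active, controlled stage — the crucial point being that the maximum over the finite index set $j=1,\dots,n$ of the corresponding stage-indices is attained at a single $j$, so the independently made per-$x_{j}$ choices are mutually consistent — and the second difference is $o(1)$ plus $\|P(d^{(x_{1})}_{k(x_{1},i)},\dots)-d^{(y)}_{k(y,i)}\|_{2}$, which tends to $0$ since $*$-polynomials are $\|\cdot\|_{2}$-continuous on operator-norm-bounded sets and $d^{(x_{j})}_{k(x_{j},i)}\to x_{j}$, $d^{(y)}_{k(y,i)}\to y$ in $\|\cdot\|_{2}$. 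Thus $\Sigma'=(\sigma'_{i})$ is an embedding sequence of $(L(\Gamma),\tau)$. Finally redefine $\sigma'_{i}$ on $\CC(\Gamma)$ to equal $\sigma_{i}$: since for $a=\sum a_{g}g\in\CC(\Gamma)$ one has $\|\sigma_{i}(a)\|_{\infty}\le\sum|a_{g}|$ and $\|\sigma_{i}(a)-\sigma_{i}(b)\|_{2}\le\sum|a_{g}-b_{g}|$, approximating the coefficients of $a$ by Gaussian rationals shows that this redefinition moves $\sigma'_{i}(a)$ by $o(1)$ in $\|\cdot\|_{2}$ on each fixed $a\in\CC(\Gamma)$ and keeps operator norms bounded, so by Proposition \ref{P:perturbation} the result is still an embedding sequence of $L(\Gamma)$; it now restricts to $\Sigma$ on $\CC(\Gamma)$.

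The main obstacle — the only place anything beyond bookkeeping is needed — is enforcing $\sup_{i}\|\sigma'_{i}(x)\|_{\infty}<\infty$ simultaneously with $\|\cdot\|_{2}$-convergence: the naive candidate $\sigma'_{i}(x)=\sigma_{i}(\text{a }\|\cdot\|_{2}\text{-approximant of }x)$ has operator norm controlled only by the $\ell^{1}$-norm of the approximant's coefficients, which necessarily blows up as the approximation is refined, and the Kaplansky bound $\|x_{n}\|_{\infty}\le\|x\|_{\infty}$ is not transported by $\sigma_{i}$. Tool (3), fed by the moment estimate from $(\ref{E:L2convergence})$, is precisely what repairs this; everything else is a routine, if lengthy, diagonalization and the standard fact that $*$-polynomials are $\|\cdot\|_{2}$-continuous on bounded sets.
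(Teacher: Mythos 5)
The paper does not reprove this lemma here — it simply cites Lemma 5.5 of \cite{Me} — and your argument is correct and follows exactly the route one expects that proof to take: Kaplansky density to produce operator-norm-controlled approximants in $\CC(\Gamma)$, a spectral truncation justified by the moment convergence $\tr\bigl((\sigma_{i}(a)^{*}\sigma_{i}(a))^{m}\bigr)\to\tau((a^{*}a)^{m})\leq\|a\|_{\infty}^{2m}$ to repair the fact that $\sigma_{i}$ does not transport the Kaplansky bound on $\|\cdot\|_{\infty}$, and a diagonalization through a countable $\|\cdot\|_{2}$-dense $*$-subalgebra followed by an appeal to Proposition \ref{P:perturbation} to restore the original values on $\CC(\Gamma)$. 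The only blemish is in that last step, where you invoke $\|\sigma_{i}(a)\|_{\infty}\leq\sum_{g}|a_{g}|$ and $\|\sigma_{i}(a)-\sigma_{i}(b)\|_{2}\leq\sum_{g}|a_{g}-b_{g}|$: a general embedding sequence of $\CC(\Gamma)$ need not be linear (this holds for the linear extension of a sofic approximation, but the lemma is stated for arbitrary embedding sequences), so these estimates should be replaced by the axioms themselves, namely $\sup_{i}\|\sigma_{i}(a)\|_{\infty}<\infty$ and $\|\sigma_{i}(a)-\sigma_{i}(b)\|_{2}\to\|a-b\|_{2}$ from $(\ref{E:L2convergence})$, which yield the same conclusion.
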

We will  use the preceding lemma when $\Gamma$ is sofic, in combination with Proposition \ref{P:soficextension}.

\subsection{Preliminaries on Real Subspaces of the Left Regular Representation}

We define the \emph{Fourier algebra} of $\Gamma$ as all functions $\phi\colon \Gamma\to \CC$ so that there is a linear functional $\Phi\colon L(\Gamma)\to\CC$ with $\Phi\big|_{\{x\in L(\Gamma):\|x\|_{\infty}\leq 1\}}$ being weak operator topology continuous so that
\[\Phi(\lambda(g))=\phi(g).\]
We will call $\Phi$ the continuous extension of $\phi$ (note that by continuity and Kaplansky's density theorem, $\Phi$ as above must be unique). We let $A(\Gamma)_{+}$ consist of all such $\phi$ where the continuous extension of $\phi$ is a positive linear functional. We let $\|\phi\|_{A(\Gamma)}=\|\Phi\|.$ By \cite{Taka} Theorem II.2.6 $\|\cdot\|_{A(\Gamma)}$ is a norm on $A(\Gamma)$ which makes $A(\Gamma)$ a Banach space. By \cite{Taka} Theorem V.3.15 we have that $A(\Gamma)$ consists of all functions of the form $g\mapsto \ip{\lambda(g)\xi,\eta}$ where $\xi,\eta\in\ell^{2}(\Gamma).$ Moreover
\[\|\phi\|=\inf\|\xi\|\|\eta\|\]
where infimum is over all $\xi,\eta$ so that $\phi(g)=\ip{\lambda(g)\xi,\eta}.$

For intuition, we leave it to the reader to verify that when $\Gamma$ is abelian, then $A(\Gamma)$ consists of all $\widehat{f}$ where $f\in L^{1}(\widehat{\Gamma}),$ and that $A(\Gamma)_{+}$ consists of all $\widehat{f}$ where $f\in L^{1}(\widehat{\Gamma})$ and $f\geq 0.$ We state a few basic (and well-known) properties of $A(\Gamma)$ in the following proposition. Lastly, if we are given $x\in L(\Gamma)$ we set
\[\widehat{x}=x\delta_{e}\in\ell^{2}(\Gamma).\]
\begin{proposition}\label{P:basicfacts} Let $\Gamma$ be a countable discrete group.
\begin{enumerate}
\item If $\rho\colon\Gamma\to \mathcal{U}(\mathcal{H})$ is a unitary representation with $\rho \ll \lambda_{\Gamma},$ and $\xi\in\mathcal{H},$ then $\phi(g)=\ip{\rho(g)\xi,\xi}$ is in $A(\Gamma)_{+}.$ In particular there is a $\zeta\in \ell^{2}(\Gamma)$ with $\ip{\rho(g)\xi,\xi}=\ip{\lambda(g)\zeta,\zeta}.$
\item If $\phi\in A(\Gamma),$ then $\overline{\phi}\in A(\Gamma)$ and $\|\overline{\phi}\|=\|\phi\|.$ If $\phi\in A(\Gamma)_{+},$ then so is $\overline{\phi}.$
\item For $x\in L(\Gamma)$ we define $\phi_{x}\colon \Gamma\to \CC$ by
\[\phi_{x}(g)=\tau(x\lambda(g)),\]
then $\phi_{x}\in A(\Gamma),$ and $\phi_{x}\in A(\Gamma)_{+}$ if and only if $x\in L(\Gamma)_{+}.$ Additionally
\[\|\phi_{x}\|=\tau(|x|),\]
and $\{\phi_{x}:x\in L(\Gamma)_{+}\}$ is dense in $A(\Gamma)_{+}.$
\item If $x,y\in L(\Gamma)$ and $\phi(g)=\ip{\lambda(g)\widehat{x},\widehat{x}},\psi(g)=\ip{\lambda(g)\widehat{y},\widehat{y}}$ then
\[\|\widehat{x}-\widehat{y}\|_{2}^{2}\leq \|\phi-\psi\|_{A(\Gamma)}.\]
\end{enumerate}
\end{proposition}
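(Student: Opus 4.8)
The plan is to handle the four items more or less in order, since the later ones lean on the earlier ones. For item (1), since $\rho\ll\lambda_\Gamma$ we may assume $\mathcal H\subseteq\ell^2(\Gamma)^{\oplus\infty}$ with $\rho$ the restriction of $\lambda_\Gamma^{\oplus\infty}$. Writing $\xi=(\xi_n)_n$, the function $\phi(g)=\ip{\rho(g)\xi,\xi}=\sum_n\ip{\lambda(g)\xi_n,\xi_n}$ is a pointwise-absolutely-convergent sum of functions of positive type, so its continuous extension $\Phi$ to $L(\Gamma)$ is the (norm-convergent, by $\sum_n\|\xi_n\|^2<\infty$) sum $\sum_n\Phi_n$ of the positive functionals $\Phi_n(x)=\ip{x\xi_n,\xi_n}$; each $\Phi_n$ is weak-operator continuous on the unit ball, and the uniform bound makes $\Phi$ so as well. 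Hence $\phi\in A(\Gamma)_+$. The ``in particular'' clause is then immediate from the cited Theorem V.3.15 of \cite{Taka}: a positive element of $A(\Gamma)$ is of the form $g\mapsto\ip{\lambda(g)\zeta,\zeta}$ for some $\zeta\in\ell^2(\Gamma)$ (apply the general representation $\phi(g)=\ip{\lambda(g)\zeta,\eta}$ and use positivity together with the polarization/GNS argument to arrange $\eta=\zeta$).

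For item (2): if $\Phi$ is the continuous extension of $\phi$, define $\overline\Phi(x)=\overline{\Phi(x^*)}$. This is linear, agrees with $\overline\phi$ on the $\lambda(g)$ (using $\lambda(g)^*=\lambda(g^{-1})$ and $\overline{\phi(g^{-1})}=\overline\phi(g)$ once one checks the convention matches), is weak-operator continuous on the unit ball because $x\mapsto x^*$ is a weak-operator homeomorphism of the ball and conjugation is continuous, and is positive whenever $\Phi$ is since $\overline\Phi(x^*x)=\overline{\Phi(x^*x)}\ge 0$. Norm equality holds because $x\mapsto x^*$ is isometric. For item (3), the map $x\mapsto\phi_x$ where $\phi_x(g)=\tau(x\lambda(g))$ has continuous extension $\Phi_x(y)=\tau(xy)$, which is weak-operator continuous on the unit ball (by property (4) of $\tau$) and positive exactly when $x\ge0$; so $\phi_x\in A(\Gamma)$ with $\phi_x\in A(\Gamma)_+$ iff $x\in L(\Gamma)_+$. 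For the norm, polar-decompose $x=v|x|$; then $\|\Phi_x\|\le\tau(|x|)$ by Cauchy--Schwarz in $\tau$, and testing against $y$ close to $v^*$ in the strong topology gives the reverse inequality, so $\|\phi_x\|=\tau(|x|)$. Density of $\{\phi_x:x\in L(\Gamma)_+\}$ in $A(\Gamma)_+$ follows from the identification of $A(\Gamma)$ with the predual $L(\Gamma)_*$: every normal state is a $\tau$-density limit (e.g.\ truncate a positive $L^1$ element, or use that $L(\Gamma)\cap L^1(L(\Gamma),\tau)$ is dense in the predual) of functionals of the form $\tau(x\,\cdot\,)$ with $x\ge0$.

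Item (4) is the computational heart and the step I expect to require the most care. For $x\in L(\Gamma)$ write $\phi_x(g)$ as in (3); I would first observe $\ip{\lambda(g)\widehat x,\widehat x}=\ip{\lambda(g)x\delta_e,x\delta_e}=\ip{x^*\lambda(g)x\,\delta_e,\delta_e}=\tau(x^*\lambda(g)x)=\tau(xx^*\lambda(g))=\phi_{xx^*}(g)$, using the trace property; similarly the function built from $\widehat y$ equals $\phi_{yy^*}$. Then by (3), $\|\phi-\psi\|_{A(\Gamma)}=\|\phi_{xx^*-yy^*}\|=\tau(|xx^*-yy^*|)$. On the other hand $\|\widehat x-\widehat y\|_2^2=\|x-y\|_2^2=\tau((x-y)^*(x-y))=\tau(x^*x)+\tau(y^*y)-2\Rea\tau(y^*x)$, while $\tau(xx^*-yy^*)$ (the untwisted trace of the difference, which is dominated in absolute value by $\tau(|xx^*-yy^*|)$) equals $\tau(x^*x)-\tau(y^*y)$. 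The point is then the elementary inequality $\tau(x^*x)+\tau(y^*y)-2\Rea\tau(y^*x)\le |\tau(x^*x)-\tau(y^*y)|+2|\Rea\tau(y^*x)-\tau(y^*y)|$, combined with $|\tau(y^*(x-y))|\le\|y\|_2\|x-y\|_2$ — but this does not obviously close without an extra bound. The cleaner route, which I would ultimately follow, is to note that $\|\widehat x-\widehat y\|_2^2 = \ip{(\,\cdot\,)1,1}$ evaluated against $\lambda(e)$ applied to a difference of positive vectors, so one reduces to the general fact that for $\zeta,\omega\in\ell^2(\Gamma)$ the functions $a(g)=\ip{\lambda(g)\zeta,\zeta}$, $b(g)=\ip{\lambda(g)\omega,\omega}$ satisfy $\|\zeta-\omega\|_2^2\le\|a-b\|_{A(\Gamma)}$; this last is the standard estimate comparing a vector to the square root of its associated positive-definite function (e.g.\ $\|\zeta-\omega\|^2\le\|\,|\zeta\rangle\langle\zeta| - |\omega\rangle\langle\omega|\,\|_1$ in trace class, pulled back through the identification $A(\Gamma)\cong$ trace-class operators modulo the $\lambda$-action), and I would cite or reprove it directly. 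The main obstacle is thus pinning down the correct duality identification so that the trace-class norm estimate transfers verbatim to $\|\cdot\|_{A(\Gamma)}$; everything else is bookkeeping with the trace property.
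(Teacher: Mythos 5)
Your treatments of (1), (3) and (4) are essentially sound and end up where the paper does: in (1) you decompose $\xi$ into a norm-summable family of vector states instead of extending $\rho$ to a normal $*$-homomorphism of $L(\Gamma)$ (both work); in (3) the positivity argument, the identity $\|\tau(x\,\cdot\,)\|=\tau(|x|)$ via polar decomposition, and the predual-density argument are exactly what the paper cites from Takesaki; and in (4), after your inconclusive direct computation, you land on citing the Powers--St\o rmer inequality, which is all the paper does (your observation that $\phi=\phi_{xx^{*}}$ and hence $\|\phi-\psi\|_{A(\Gamma)}=\tau(|xx^{*}-yy^{*}|)$ by (3) is in fact a clean way to make that citation precise).

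The genuine problem is item (2). The functional $\overline{\Phi}(x):=\overline{\Phi(x^{*})}$ does \emph{not} extend $\overline{\phi}$: on group elements it gives $\overline{\Phi}(\lambda(g))=\overline{\Phi(\lambda(g^{-1}))}=\overline{\phi(g^{-1})}$, and the identity $\overline{\phi(g^{-1})}=\overline{\phi(g)}$ that you invoke is false in general. Indeed, if $\phi$ is of positive type then $\phi(g^{-1})=\overline{\phi(g)}$, so your construction simply returns $\phi$ itself and says nothing about $\overline{\phi}$; for a general $\phi(g)=\ip{\lambda(g)\xi,\eta}$ it returns $g\mapsto\ip{\lambda(g)\eta,\xi}$, i.e.\ the ``adjoint'' function $\phi^{*}(g)=\overline{\phi(g^{-1})}$, which is a true but different statement. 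The correct route --- the one the paper takes --- is to write $\phi(g)=\ip{\lambda(g)\xi,\eta}$ and note that $\overline{\phi}(g)=\ip{\lambda(g)\overline{\xi},\overline{\eta}}$, where the bars denote entrywise complex conjugation on $\ell^{2}(\Gamma)$; this works because $\lambda(g)$ has real matrix entries in the standard basis and therefore commutes with entrywise conjugation. Membership in $A(\Gamma)$, the norm equality (via the infimum formula $\|\phi\|=\inf\|\xi\|\,\|\eta\|$ and $\|\overline{\xi}\|=\|\xi\|$), and preservation of positivity (take $\eta=\xi$) all follow at once. This is not a cosmetic point: the correct form of (2) is used in Lemma \ref{L:realembedding}, where one symmetrizes $x_{n}$ with the conjugation operator $C$ and needs $\|\overline{\phi}-\overline{\phi_{x_{n}}}\|=\|\phi-\phi_{x_{n}}\|$ for the genuine complex conjugate, not for $\phi^{*}$.
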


\begin{proof}

(1): The assumption $\rho \ll \lambda_{\Gamma}$ implies that we may extend $\rho$ to a $*$-homomorphism $\rho\colon L(\Gamma)\to B(\mathcal{H})$ so that
\[\rho\big|_{\{x\in L(\Gamma):\|x\|_{\infty}\leq 1\}}\]
is weak operator topology continuous. Thus the continuous extension $\Phi$ of $\phi$ is given by
\[\Phi(x)=\ip{\rho(x)\xi,\xi}.\]
The ``in particular'' part follows from the discussion preceding the proposition.

(2): Write
\[\phi(x)=\ip{\lambda(x)\xi,\eta},\]
with $\xi,\eta\in \ell^{2}(\Gamma).$ Then
\[\overline{\phi(x)}=\ip{\lambda(x)\overline{\xi},\overline{\eta}}\]
the conclusion follows easily from this equality.

(3): If $x\in L(\Gamma)_{+},$ then the continuous extension $\Phi$ of $\phi_{x}$ is given by
\[\Phi(y)=\tau(xy).\]
For $y\geq 0,$
\[\Phi(y)=\tau(xy)=\tau(y^{1/2}xy^{1/2})\geq 0\]
as $x\geq 0.$

	Conversely, if $\phi_{x}\in A(\Gamma)_{+}$ let $\Phi$ be the continuous extension of $\phi_{x}$ to $L(\Gamma).$  Then for all $y\in L(\Gamma)$
\[\ip{xy\delta_{e},y\delta_{e}}=\tau(y^{*}xy)=\tau(xyy^{*})=\Phi(yy^{*})\geq 0.\]
Since $\overline{L(\Gamma)\delta_{e}}=\ell^{2}(\Gamma),$ we see that $x\in L(\Gamma)_{+}.$ The norm equality and density statement are contained in \cite{TakesakiII} Lemma IX.2.12.

(4): This is the Powers-St\o rmer inequality (see \cite{BO} Proposition 6.2.4 for a proof which generalizes to our situation).

\end{proof}

In order to compute the entropy of Gaussian actions, we will need to discuss real subspaces of $\ell^{2}(\Gamma),$ and for this we need a real version of $L(\Gamma).$  We let $L_{\RR}(\Gamma)$ be all $x\in L(\Gamma)$ so that $\widehat{x}\in \ell^{2}(\Gamma,\RR).$  Recall that the convolution between $f\in c_{c}(\Gamma),g\in \CC^{\Gamma}$ is defined by
\[f*g(x)=\sum_{h\in\Gamma}f(h)g(h^{-1}x).\]
We similarly define $f*g$ if $f\in \CC^{\Gamma}$ and $g\in c_{c}(\Gamma).$
If $f\in c_{c}(\Gamma),$ we use
\[\check{f}=\sum_{g\in\Gamma}f(g)g\in \CC(\Gamma).\]
Note that for all $f\in c_{c}(\Gamma),x\in L(\Gamma)$ we have $xf=\widehat{x}*f.$ Thus
\[\|x\|_{\infty}=\sup_{f\in c_{c}(\Gamma),\|f\|_{2}=1}\|\widehat{x}*f\|_{2}.\]
Define $J\colon \ell^{2}(\Gamma)\to \ell^{2}(\Gamma)$ by $(Jf)(x)=\overline{f(x^{-1})},$ and observe that $J\widehat{x}=\widehat{x^{*}}.$ Thus for all $f\in c_{c}(\Gamma),$
\[\|f*\widehat{x}\|_{2}=\|J\widehat{x}*Jf\|_{2}=\|\widehat{x^{*}}*f\|_{2}\leq \|x^{*}\|_{\infty}\|f\|_{2}=\|x\|_{\infty}\|f\|_{2}.\]
Replacing $x$ with $x^{*}$ we see that
\[\|x\|_{\infty}=\sup_{f\in c_{c}(\Gamma),\|f\|_{2}=1}\|f*\widehat{x}\|_{2}.\]
Thus there is a unique bounded operator on $\ell^{2}(\Gamma)$ extending $f\mapsto f*\widehat{x}$ of norm equal to that of $x.$ We write the image of $\xi\in\ell^{2}(\Gamma)$ under this operator as $\xi x.$ We use
\[\ell^{2}(\Gamma)x=\{\xi x:\xi\in\ell^{2}(\Gamma)\}.\]

	By  Theorem 43.11 and Proposition 43.10 of \cite{ConwayOT}, we have
\[\{\widehat{x}:x\in L(\Gamma)\}=\left\{\xi\in\ell^{2}(\Gamma):\sup_{f\in c_{c}(\Gamma),\|f\|\leq 1}\|\xi*f\|_{2}<\infty\right\}.\]
For $\xi\in \ell^{2}(\Gamma),f\in c_{c}(\Gamma)$ we have
\[\|\xi*f\|_{2}=\|\overline{\xi}*\overline{f}\|_{2}\]
where bar denotes complex conjugation. It follows that there is a weak operator topology continuous, norm-one operator
\[C\colon L(\Gamma)\to L(\Gamma)\]
so that
\[\widehat{Cx}=\overline{\widehat{x}}.\]

\begin{lemma}\label{L:realembedding} Let $\Gamma$ be a countable discrete group and $\rho\colon\Gamma\to \mathcal{O}(\mathcal{H})$ an orthogonal representation on a real separable Hilbert space $\mathcal{H}$ with $\rho_{\CC}\ll\lambda_{\Gamma}.$ Suppose that $\rho$ has a cyclic vector. Then there is an orthogonal projection $p\in L_{\RR}(\Gamma)$ so that
\[\Gamma\actson \mathcal{H}\cong \Gamma\actson \ell^{2}(\Gamma,\RR)p.\]

\end{lemma}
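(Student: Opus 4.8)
The plan is to realize $\mathcal{H}$ as the real GNS space of the positive-definite function attached to a cyclic vector, to identify that space with $\ell^{2}(\Gamma,\RR)p$ where $p$ is the support projection of the density of the associated functional, and throughout to track the real structure by means of the conjugation $C$ introduced above.

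First I would set up the data. Fix a cyclic vector $\xi$ for $\rho$ and put $\phi(g)=\ip{\rho(g)\xi,\xi}$. Since $\rho$ is orthogonal, $\phi$ is real valued, and since $\rho_{\CC}\ll\lambda_{\Gamma}$, Proposition~\ref{P:basicfacts}(1) shows $\phi\in A(\Gamma)_{+}$; hence its continuous extension $\Phi$ is a normal positive functional on $L(\Gamma)$ with $\Phi(1)=\|\xi\|^{2}$, so $\Phi=\tau(h\,\cdot\,)$ for a unique $h\in L^{1}(L(\Gamma),\tau)_{+}$ (this can be extracted from Proposition~\ref{P:basicfacts}(3) by writing $\phi$ as an $A(\Gamma)$-limit, hence $\|\cdot\|_{1}$-Cauchy limit, of $\phi_{x}$ with $x\in L(\Gamma)_{+}$). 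Then $h^{1/2}\in L^{2}(L(\Gamma),\tau)=\ell^{2}(\Gamma)$ with $\|h^{1/2}\|_{2}^{2}=\tau(h)=\|\xi\|^{2}$. Because $\phi$ is real, a short computation with the normality of $\Phi$ and the identity $\tau(Ca\cdot Cb)=\overline{\tau(ab)}$ gives $Ch=h$; since $C$ is a conjugate-linear $*$-automorphism preserving positivity and commuting with the continuous functional calculus of positive elements, $C(h^{1/2})=h^{1/2}$, i.e.\ $h^{1/2}\in\ell^{2}(\Gamma,\RR)$, and the support projection $p=\chi_{(0,\infty)}(h)\in L(\Gamma)$ likewise satisfies $Cp=p$, so $p\in L_{\RR}(\Gamma)$.

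Second I would identify the relevant subspace. Right multiplication by $p$ is an orthogonal projection of $\ell^{2}(\Gamma)$ with range $\ell^{2}(\Gamma)p$, and I claim $\overline{\lambda(L(\Gamma))h^{1/2}}=\ell^{2}(\Gamma)p$: the inclusion $\subseteq$ follows from $h^{1/2}p=h^{1/2}$, and $\supseteq$ because $f_{n}(h)h^{1/2}=\chi_{(1/n,\infty)}(h)\to\widehat{p}$ in $\|\cdot\|_{2}$ for a suitable sequence $f_{n}$, so $\widehat{p}\in\overline{\lambda(L(\Gamma))h^{1/2}}$ and hence $\ell^{2}(\Gamma)p=\overline{\lambda(L(\Gamma))\widehat{p}}\subseteq\overline{\lambda(L(\Gamma))h^{1/2}}$. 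Since $C$ commutes with each $\lambda(g)$ and fixes $h^{1/2}$, each $\lambda(g)h^{1/2}$ is fixed by $C$, so $\overline{\lambda(L(\Gamma))h^{1/2}}=\ell^{2}(\Gamma)p$ is the complexification of the closed real $\lambda_{\RR}$-invariant subspace $\mathcal{K}:=\overline{\Span_{\RR}\{\lambda(g)h^{1/2}:g\in\Gamma\}}$, and intersecting with $\ell^{2}(\Gamma,\RR)$ yields $\mathcal{K}=\ell^{2}(\Gamma)p\cap\ell^{2}(\Gamma,\RR)=\ell^{2}(\Gamma,\RR)p$.

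Finally, $h^{1/2}$ is a cyclic vector for the orthogonal representation $\lambda_{\RR}$ on $\mathcal{K}$, and $\ip{\lambda(g)h^{1/2},h^{1/2}}_{2}=\tau(\lambda(g)h)=\phi(g)=\ip{\rho(g)\xi,\xi}$, while $\xi$ is cyclic for $\rho$; so $(\mathcal{H},\rho,\xi)$ and $(\mathcal{K},\lambda_{\RR},h^{1/2})$ implement the same real positive-definite function and are therefore isomorphic by uniqueness of the GNS construction. Combining with $\mathcal{K}=\ell^{2}(\Gamma,\RR)p$ gives $\Gamma\actson\mathcal{H}\cong\Gamma\actson\ell^{2}(\Gamma,\RR)p$. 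The main obstacle is the bookkeeping of the real structure in the middle step: one must genuinely verify that the a priori unbounded density $h$, its square root, and its support projection are all fixed by $C$, and that moving between the complex span $\ell^{2}(\Gamma)p$ and its real part $\ell^{2}(\Gamma,\RR)p$ respects the $\Gamma$-action --- this is precisely where both hypotheses, $\rho$ orthogonal (so that $\phi$ is real) and $\rho_{\CC}\ll\lambda_{\Gamma}$ (so that $\Phi$ is normal and $h$ exists), are used; the remaining ingredients are routine functional calculus and the standard uniqueness of the GNS construction.
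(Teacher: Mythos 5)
Your proof is correct, and it shares the paper's overall strategy --- produce a real vector $\zeta\in\ell^{2}(\Gamma,\RR)$ with $\ip{\lambda(g)\zeta,\zeta}=\ip{\rho(g)\xi,\xi}$ and conclude by uniqueness of the (real) GNS construction --- but the two middle steps are carried out by genuinely different means. The paper never introduces the $L^{1}$-density $h$: it approximates $\phi$ in the $A(\Gamma)$-norm by $\phi_{y_{n}}$ with $y_{n}=\tfrac{1}{2}(x_{n}+Cx_{n})\in L(\Gamma)_{+}$ \emph{bounded}, sets $\zeta_{n}=\widehat{y_{n}^{1/2}}\in\ell^{2}(\Gamma,\RR)$, and invokes the Powers--St\o rmer inequality (Proposition \ref{P:basicfacts}(4)) to show that $(\zeta_{n})$ is Cauchy; the projection $p$ is then produced abstractly by applying the commutant theorem to the orthogonal projection onto $\overline{\Span\{\lambda(g)\zeta:g\in\Gamma\}}$. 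You instead pass to the limit at the level of densities, obtaining $h\in L^{1}(L(\Gamma),\tau)_{+}$ with $\Phi=\tau(h\,\cdot\,)$, take $\zeta=h^{1/2}$, and identify $p$ explicitly as the support projection of $h$. What your route buys is concreteness: $p$ is named, and the identification $\overline{\lambda(L(\Gamma))h^{1/2}}=\ell^{2}(\Gamma)p$ is essentially immediate from $h^{1/2}p=h^{1/2}$ and $\chi_{(1/n,\infty)}(h)\to p$. What it costs is a layer of noncommutative integration theory that the paper deliberately avoids: $h$ is only an unbounded positive operator affiliated with $L(\Gamma)$, so you must justify its measurable functional calculus, check that $C$ is a conjugate-linear $*$-automorphism (the paper only ever uses that it is a norm-one WOT-continuous map with $\widehat{Cx}=\overline{\widehat{x}}$), and verify that $C$ intertwines that functional calculus so that $Ch=h$ forces $C(h^{1/2})=h^{1/2}$ and $Cp=p$. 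These verifications are routine but not free, whereas the Powers--St\o rmer route keeps every operator bounded and every limit inside $\ell^{2}(\Gamma,\RR)$. Both arguments are valid.
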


\begin{proof}

Let $\xi$ be a cyclic vector for $\mathcal{H},$ and define $\phi\colon\Gamma\to \RR$ by
\[\phi(g)=\ip{\rho(g)\xi,\xi},\]
then $\phi(g)\in A(\Gamma)_{+}.$ By (3) of the preceding Proposition, we may find $x_{n}\in L(\Gamma)_{+}$ so that
\[\|\phi-\phi_{x_{n}}\|\to 0.\]
Letting $y_{n}=\frac{x_{n}+Cx_{n}}{2}$ and using that $\overline{\phi}=\phi$ we find that
\[\|\phi-\phi_{y_{n}}\|\to 0.\]
Let $\zeta_{n}=\widehat{y_{n}^{1/2}}.$ Approximating the square root function by polynomials, we see that $\zeta_{n}\in \ell^{2}(\Gamma,\RR).$ Note that
\[\phi_{y_{n}}(g)=\tau(\lambda(g)y_{n}^{1/2}y_{n}^{1/2})=\tau(y_{n}^{1/2}\lambda(g)y_{n}^{1/2})=\ip{\lambda(g)\zeta_{n},\zeta_{n}}.\]
 By the Powers-St\o rmer inequality (i.e. (4) of the preceding Proposition),
\[\|\zeta_{n}-\zeta_{m}\|\leq \|\phi_{y_{n}}-\phi_{y_{m}}\|^{1/2}\]
so that $\zeta_{n}$ is a Cauchy sequence. Hence $\zeta_{n}$ converges to a $\zeta\in\ell^{2}(\Gamma,\RR).$ Also,
\[\phi(g)=\lim_{n\to \infty}\phi_{y_{n}}(g)=\lim_{n\to \infty}\ip{\lambda(g)\zeta_{n},\zeta_{n}}=\lim_{n\to\infty}\ip{\lambda(g)\zeta,\zeta}.\]
Thus
\[\Gamma\actson \mathcal{H}\cong \Gamma\actson \overline{\Span\{\lambda(g)\zeta:g\in\Gamma\}}.\]
Let $P$ be the projection from $\ell^{2}(\Gamma,\RR)$ onto $\overline{\Span\{\lambda(g)\zeta:g\in\Gamma\}}$ and let $P_{\CC}$ denote its complexification as an operator on $\ell^{2}(\Gamma).$  As $P_{\CC}$ commutes with $\lambda(g),$  Theorem 43.11 of \cite{ConwayOT} shows that  there is a unique orthogonal projection $p\in L(\Gamma)$ so that  for all $\xi\in \ell^{2}(\Gamma)$ we have $P_{\CC}\xi=\xi p.$ Moreover,
\[\widehat{p}(g)=\ip{\widehat{p},\delta_{g}}=\ip{P(\delta_{e}),\delta_{g}}\in \RR\]
so $p\in L_{\RR}(\Gamma).$ Thus
\[\overline{\left\{\sum_{g\in\Gamma} f(g)\lambda(g)\zeta:f\in c_{c}(\Gamma,\RR)\right\}}=P(\ell^{2}(\Gamma,\RR))=\ell^{2}(\Gamma,\RR)p.\]

\end{proof}

We will need to extend a sofic approximation to an embedding sequence of $L(\Gamma)$ as in Lemma 5.5 in \cite{Me}, however we will also want $\sigma_{i}(L_{\RR}(\Gamma))\subseteq M_{d_{i}}(\RR).$

\begin{proposition}\label{P:realextension} Let $\Gamma$ be a countable discrete group and $\Sigma=(\sigma_{i}\colon \Gamma\to S_{d_{i}})$ a sofic approximation.

(i): There exists an embedding sequence $\Sigma=(\sigma_{i}\colon L(\Gamma)\to M_{d_{i}}(\CC))$ such that
\[\sigma_{i}(\alpha)=\sum_{g\in\Gamma}\widehat{\alpha}(g)\sigma_{i}(g)\]
and $\sigma_{i}(L_{\RR}(\Gamma))\subseteq M_{d_{i}}(\RR).$

(ii) If $\Sigma=(\sigma_{i}\colon L(\Gamma)\to M_{d_{i}}(\CC))$ is as in $(i),$ and $p\in L_{\RR}(\Gamma)$  is an orthogonal projection, then there are orthogonal projections $p_{i}\in M_{d_{i}}(\RR)$ so that
\[\|\sigma_{i}(p)-p_{i}\|_{2}\to 0.\]

\end{proposition}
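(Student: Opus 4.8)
\medskip

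For part (i), the plan is to invoke Lemma \ref{L:soficextension} to get some embedding sequence $(\widetilde\sigma_i\colon L(\Gamma)\to M_{d_i}(\CC))$ extending the given linearized sofic approximation, and then to repair it so that the two extra requirements hold. The requirement $\sigma_i(\alpha)=\sum_g \widehat\alpha(g)\sigma_i(g)$ for $\alpha\in\CC(\Gamma)$ is already automatic on $\CC(\Gamma)$ by Proposition \ref{P:soficextension}, so the only issue is whether Lemma \ref{L:soficextension} preserves it; reviewing the construction (or, more robustly, running the following averaging argument) handles this. To force $\sigma_i(L_\RR(\Gamma))\subseteq M_{d_i}(\RR)$, I would use the conjugate-linear operator $C\colon L(\Gamma)\to L(\Gamma)$ with $\widehat{Cx}=\overline{\widehat x}$ constructed just before Lemma \ref{L:realembedding}, together with complex conjugation $A\mapsto \overline A$ on $M_{d_i}(\CC)$ (which, note, is an entrywise operation preserving $\|\cdot\|_2$ and $\|\cdot\|_\infty$ and commuting with the trace). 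Since each permutation matrix $\sigma_i(g)$ is real, one checks $\overline{\widetilde\sigma_i(\alpha)}=\widetilde\sigma_i(\alpha)$ for $\alpha\in\CC(\Gamma)$, so the sequence $\sigma_i'(x):=\tfrac12(\widetilde\sigma_i(x)+\overline{\widetilde\sigma_i(Cx)})$ still agrees with the linearization on $\CC(\Gamma)$, still has uniformly bounded operator norms, and satisfies $\|\sigma_i'(x)-\widetilde\sigma_i(x)\|_2\to 0$ for every $x\in L(\Gamma)$ — this last point because $C$ is WOT-continuous and norm one, the map $x\mapsto \widehat{Cx}$ is the restriction of $J$-type conjugation, and the averaged sequence and original sequence agree in $\|\cdot\|_2$ in the limit by approximating $x$ in $\|\cdot\|_2$ by elements of $\CC(\Gamma)$ together with a Kaplansky-density/uniform-boundedness argument. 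Then Proposition \ref{P:perturbation} shows $\Sigma'=(\sigma_i')$ is again an embedding sequence. Finally, if $x\in L_\RR(\Gamma)$ then $Cx=x$, so $\sigma_i'(x)=\tfrac12(\widetilde\sigma_i(x)+\overline{\widetilde\sigma_i(x)})=\Rea\,\widetilde\sigma_i(x)\in M_{d_i}(\RR)$, as desired.

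\medskip

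For part (ii), I start from $\sigma_i(p)\in M_{d_i}(\RR)$, which is a real symmetric matrix (it is self-adjoint up to a $\|\cdot\|_2$-small error, and one may symmetrize by replacing $\sigma_i(p)$ with $\tfrac12(\sigma_i(p)+\sigma_i(p)^{*})$, a real symmetric matrix still $\|\cdot\|_2$-close to $\sigma_i(p)$ since $\|\sigma_i(p^{*})-\sigma_i(p)\|_2=\|\sigma_i(p)-\sigma_i(p)\|_2=0$ as $p=p^*$, and $\|\sigma_i(p)^{*}-\sigma_i(p^{*})\|_2\to 0$). Since $p=p^2$, the embedding-sequence property gives $\|\sigma_i(p)^2-\sigma_i(p)\|_2\to 0$, and combined with $\tr(\sigma_i(p))\to\tau(p)$ and uniform boundedness of $\|\sigma_i(p)\|_\infty$ this forces the spectral measure of the symmetric matrix $\sigma_i(p)$ to concentrate near $\{0,1\}$: explicitly, writing $\sigma_i(p)=\sum_k t_k^{(i)}E_k^{(i)}$ in its spectral decomposition, $\|\sigma_i(p)^2-\sigma_i(p)\|_2^2 = \tr\big((\sigma_i(p)^2-\sigma_i(p))^2\big)$ is, up to the vanishing error, $\frac{1}{d_i}\sum$ of eigenvalue contributions weighted by $(t^2-t)^2$, so the proportion of eigenvalues outside a neighborhood of $\{0,1\}$ tends to $0$. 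Let $p_i$ be the spectral projection of $\sigma_i(p)$ onto $[\tfrac12,\infty)$; this is a real symmetric projection (functional calculus of a real symmetric matrix by the indicator of an interval stays in $M_{d_i}(\RR)$). Then $\|\sigma_i(p)-p_i\|_2^2=\frac{1}{d_i}\sum_k \#\{\text{eigenvalues}=t_k^{(i)}\}\,\cdot|t_k^{(i)}-\mathbf 1_{[1/2,\infty)}(t_k^{(i)})|^2\le \frac{1}{d_i}\cdot(\text{mass near }\{0,1\})\cdot(\text{small})+(\text{mass away from }\{0,1\})\cdot(\text{bounded})\to 0$, using $\sup_i\|\sigma_i(p)\|_\infty<\infty$ to bound the contribution from the vanishing-proportion set. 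Hence $\|\sigma_i(p)-p_i\|_2\to 0$.

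\medskip

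The main obstacle I anticipate is part (i): not the averaging trick itself, which is routine, but verifying carefully that Lemma \ref{L:soficextension}'s extension can be taken (or modified) to still satisfy $\sigma_i(\alpha)=\sum_g\widehat\alpha(g)\sigma_i(g)$ on $\CC(\Gamma)$ — one must make sure the $\RR$-symmetrization does not destroy this, which is why it is important that permutation matrices are real and the linearization on $\CC(\Gamma)$ is visibly fixed by $A\mapsto\overline A$. Part (ii) is essentially a standard "approximate projections are close to genuine projections in $\|\cdot\|_2$" lemma for real symmetric matrices and should go through with only the routine spectral-measure estimate sketched above.
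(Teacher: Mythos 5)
Your proposal is correct and follows essentially the same route as the paper: part (i) is the same averaging with the entrywise conjugate via $C$, justified by showing $\|\sigma_{i}(Cx)-\overline{\sigma_{i}(x)}\|_{2}\to 0$ through approximation from $\CC(\Gamma)$ together with Proposition \ref{P:perturbation}, and part (ii) is the paper's functional-calculus argument (it takes $p_{i}=\chi_{[1/2,3/2]}(|\sigma_{i}(p)|^{2})$ rather than your symmetrization followed by the spectral projection onto $[1/2,\infty)$, a cosmetic difference). The only slip is that for general $\alpha\in\CC(\Gamma)$ the relevant identity is $\overline{\widetilde{\sigma}_{i}(C\alpha)}=\widetilde{\sigma}_{i}(\alpha)$ rather than $\overline{\widetilde{\sigma}_{i}(\alpha)}=\widetilde{\sigma}_{i}(\alpha)$ (the latter holds only for real coefficients), and since the former is exactly what your averaging uses, nothing breaks.
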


\begin{proof}

(i): By Lemma \ref{L:soficextension} we may extend to some embedding sequence $\Sigma=(\sigma_{i}\colon L(\Gamma)\to M_{d_{i}}(\CC))$ so that
\[\sigma_{i}(\alpha)=\sum_{g\in\Gamma}\widehat{\alpha}(g)\sigma_{i}(g).\]
By Proposition \ref{P:perturbation}, it suffices to show that for all $x\in L_{\RR}(\Gamma)\setminus \RR(\Gamma),$ there are $x_{i}\in M_{d_{i}}(\RR)$ so that
\[\sup_{i}\|x_{i}\|_{\infty}<\infty\]
and
\[\|x_{i}-\sigma_{i}(x)\|_{2}\to 0.\]
 For $A\in M_{n}(\CC)$ define $\overline{A}\in M_{n}(\CC)$ by $\overline{A}_{ij}=\overline{A_{ij}},$ then
\[\|\overline{A}\|_{\infty}=\|A\|_{\infty}\]
\[\|\overline{A}\|_{2}=\|A\|_{2}.\]
It suffices to show that
\[\|\sigma_{i}(Cx)-\overline{\sigma_{i}(x)}\|_{2}\to 0\]
for all $x\in L(\Gamma).$ Indeed, assuming we have the above convergence we may then redefine $\sigma_{i}(x)$ for $x\in L_{\RR}(\Gamma)$ by $\frac{1}{2}(\sigma_{i}(x)+\overline{\sigma_{i}(x)}).$ So let $x\in L(\Gamma),$ let $\varepsilon>0,$ and let $\alpha\in \CC(\Gamma)$ be such that
\[\|x\delta_{e}-\lambda(\alpha)\delta_{e}\|_{2}<\varepsilon.\]
Since $\sigma_{i}(C\lambda(\alpha))=\overline{\sigma_{i}(\alpha)},$
\begin{align*}
\|\sigma_{i}(Cx)-\overline{\sigma_{i}(x)}\|_{2}&\leq \|\sigma_{i}(Cx)-\sigma_{i}(C\lambda(\alpha))\|_{2}+\|\overline{\sigma_{i}(x)}-\overline{\sigma_{i}(\lambda(\alpha))}\|_{2}\\
&= \|\sigma_{i}(Cx)-\sigma_{i}(C\lambda(\alpha))\|_{2}+\|\sigma_{i}(x)-\sigma_{i}(\lambda(\alpha))\|_{2}.
\end{align*}
Letting $i\to \infty$ and using that $\sigma_{i}$ is a sofic approximation we find that
\[\limsup_{i\to \infty}\|\sigma_{i}(Cx)-\overline{\sigma_{i}(x)}\|_{2}\leq \|Cx\delta_{e}-C\lambda(\alpha)\delta_{e}\|+\|x\delta_{e}-\lambda(\alpha)\delta_{e}\|_{2}<2\varepsilon.\]
Letting $\varepsilon\to 0$ proves $(i).$

(ii): Since $\sigma_{i}$ is an embedding sequence we have
\[\||\sigma_{i}(p)|^{2}-\sigma_{i}(p)\|_{2}\to 0,\]
\[\||\sigma_{i}(p)|^{2}-|\sigma_{i}(p)|^{4}\|_{2}\to 0.\]
By functional calculus,
\begin{align*}
\|\chi_{[1/2,3/2]}(|\sigma_{i}(p)|^{2})-|\sigma_{i}(p)|^{2})\|_{2}^{2}&=\|\chi_{\{t:|t-1|>1/2\}}(|\sigma_{i}(p)|^{2})\|_{2}^{2}\\
&=\tr(\chi_{\{t:|t-1|>1/2\}}(|\sigma_{i}(p)|^{2}))\\
&\leq16\tr(||\sigma_{i}(p)|^{2}-|\sigma_{i}(p)|^{4}|^{2})\\
&=16\||\sigma_{i}(p)|^{2}-|\sigma_{i}(p)|^{4}\|_{2}^{2}\\
&\to 0.
\end{align*}
Thus setting $p_{i}=\chi_{[1/2,3/2]}(|\sigma_{i}(p)|^{2})$ completes the proof.

\end{proof}

Lastly, we will need an analogous definition of singularity, as in the unitary case. If $\rho_{j}\colon \Gamma\to \mathcal{O}(\mathcal{H}_{j}),j=1,2$ are two orthogonal representations, we use $\Hom_{\Gamma}(\rho_{1},\rho_{2})$ for the space of \emph{real}, linear, bounded, $\Gamma$-equivariant maps from $\mathcal{H}_{1}\to \mathcal{H}_{2}.$  We say that $\rho_{1},\rho_{2}$ are mutually singular, written $\rho_{1}\perp \rho_{2},$  if whenever $\rho_{j}'$ is a nonzero subrepresentation of $\rho_{j}$ for $j=1,2$ we have that $\rho_{1}',\rho_{2}'$ are not isomorphic. Similarly, we say that $\rho_{1}\ll \rho_{2}$ if $\rho_{1}$ is embeddable into $\rho_{2}^{\oplus \infty}.$

\begin{lemma}\label{L:realsingularity} Let $\Gamma$ be a countable discrete group, and $\rho_{j}\colon\Gamma\to \mathcal{O}(\mathcal{H}_{j})$ two orthogonal representations. The following are equivalent.

(i): $\rho_{1}\perp\rho_{2}$

(ii): $\Hom_{\Gamma}(\rho_{1},\rho_{2})=\{0\},$

(iii): $\Hom_{\Gamma}(\rho_{2},\rho_{1})=\{0\}.$

\end{lemma}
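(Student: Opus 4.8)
The plan is to first establish the symmetry of conditions (ii) and (iii), and then to show that failure of singularity (i.e., the existence of isomorphic nonzero subrepresentations) is equivalent to the existence of a nonzero intertwiner. The cleanest route is to use the complexification functor together with polar decomposition, since those tools are already available in the orthogonal setting via the operators and von Neumann algebra machinery set up above.

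\emph{Step 1: (ii) $\Leftrightarrow$ (iii).} Given $T\in\Hom_\Gamma(\rho_1,\rho_2)$, the adjoint $T^*\colon\mathcal H_2\to\mathcal H_1$ is again real, bounded, and $\Gamma$-equivariant, since $\rho_j(g)$ is orthogonal and hence $\rho_j(g)^*=\rho_j(g)^{-1}=\rho_j(g^{-1})$; thus $T\mapsto T^*$ gives a bijection between $\Hom_\Gamma(\rho_1,\rho_2)$ and $\Hom_\Gamma(\rho_2,\rho_1)$, and in particular one space is $\{0\}$ iff the other is. This disposes of the equivalence of (ii) and (iii).

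\emph{Step 2: $\neg$(i) $\Rightarrow$ $\neg$(ii).} If $\rho_1\not\perp\rho_2$, there are nonzero subrepresentations $\rho_1'\leq\rho_1$ and $\rho_2'\leq\rho_2$ with an isomorphism (a $\Gamma$-equivariant orthogonal bijection) $U\colon\mathcal H_1'\to\mathcal H_2'$. Composing with the inclusion $\mathcal H_2'\hookrightarrow\mathcal H_2$ and precomposing with the orthogonal projection $\mathcal H_1\twoheadrightarrow\mathcal H_1'$ (which is $\Gamma$-equivariant since $\mathcal H_1'$ is invariant) yields a nonzero element of $\Hom_\Gamma(\rho_1,\rho_2)$.

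\emph{Step 3: $\neg$(ii) $\Rightarrow$ $\neg$(i), the main point.} Let $0\neq T\in\Hom_\Gamma(\rho_1,\rho_2)$. Consider the positive operator $|T|=(T^*T)^{1/2}\in B(\mathcal H_1)$; by equivariance of $T$ and $T^*$, the operator $T^*T$ commutes with $\rho_1(g)$ for all $g$, hence so does $|T|$ (approximate the square root by polynomials, exactly as in the proof of Lemma \ref{L:realembedding}; note all these operators are real since $T$, $T^*$ are). Let $T=V|T|$ be the polar decomposition, with $V$ a partial isometry from $\overline{\ran|T|}=(\ker T)^\perp$ onto $\overline{\ran T}$; standard uniqueness of the polar decomposition shows $V$ intertwines $\rho_1$ and $\rho_2$ as well, and $V$ is real because $T$ and $|T|$ are. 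Since $T\neq0$, the spaces $\mathcal H_1'=(\ker T)^\perp$ and $\mathcal H_2'=\overline{\ran T}$ are nonzero, $\Gamma$-invariant (the former because $|T|$ commutes with the representation; the latter because $T$ intertwines), and $V|_{\mathcal H_1'}\colon\mathcal H_1'\to\mathcal H_2'$ is an orthogonal $\Gamma$-equivariant isomorphism. Hence $\rho_1'\cong\rho_2'$ as subrepresentations, so $\rho_1\not\perp\rho_2$.

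The one step that requires a little care — and that I would single out as the main (minor) obstacle — is verifying that polar decomposition and the continuous functional calculus behave well in the \emph{real} category: that $|T|$, $V$, and the projections onto $\ker T$ and $\overline{\ran T}$ all have real matrix entries / preserve $\mathcal H_j$ rather than only $(\mathcal H_j)_\CC$. This is handled by the same polynomial-approximation argument used in Lemma \ref{L:realembedding} (real polynomials in the real operator $T^*T$ stay real, and $V=\lim_n T(T^*T+1/n)^{-1/2}$ is a limit of real operators), together with the observation that the spectral projection $\chi_{\{0\}}(T^*T)$ onto $\ker T$ is likewise a strong limit of real polynomials in $T^*T$. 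Everything else is the routine translation of the well-known unitary-representation dichotomy into the orthogonal setting.
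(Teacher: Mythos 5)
Your proposal is correct and follows essentially the same route as the paper: the equivalence of (ii) and (iii) and the implication from (ii) to (i) are routine, and the substantive point in both arguments is establishing a polar decomposition in the real category, done by approximating the square root by polynomials and realizing the partial isometry as a strong limit $V=\lim_{\varepsilon\to 0}T(|T|+\varepsilon)^{-1}$ of operators preserving the real subspaces. The paper phrases this by complexifying $T$ and checking that $|T_{\CC}|$ and $U$ map $\mathcal{H}_{1}$ into $\mathcal{H}_{1}$ and $\mathcal{H}_{2}$ respectively, which is the same verification you carry out directly.
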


\begin{proof} The proof of $(ii)$ equivalent to $(iii)$ and $(ii)$ implies $(i)$ is the same as in Proposition 4.2 of \cite{Me6}. We can copy the proof of $(i)$ implies $(ii)$ in Proposition 4.2 of \cite{Me6}  provided we prove an analogue of the Polar decomposition.

So let $T\colon \mathcal{H}_{1}\to \mathcal{H}_{2}$ be a bounded operator. Let
\[T_{\CC}\colon \mathcal{H}_{1,\CC}\to \mathcal{H}_{2,\CC}\]
be the complexification of $T.$ Using $T^{t}$ for the real Banach space transpose of $T,$
\[T_{\CC}^{*}\big|_{\mathcal{H}_{2}}=T^{t}.\]
Thus
\[T_{\CC}^{*}T_{\CC}(\mathcal{H}_{1})\subseteq \mathcal{H}_{1},\]
approximating the square root function by polynomials, we see that
\[|T_{\CC}|(\mathcal{H}_{1})\subseteq \mathcal{H}_{1}.\]
Let
\[T_{\CC}=U|T_{\CC}|\]
be the polar decomposition. As
\[U=SOT-\lim_{\varepsilon\to 0}T_{\CC}(|T_{\CC}|+\varepsilon)^{-1},\]
we find that $U(\mathcal{H}_{1})\subseteq \mathcal{H}_{2}.$ The rest is as in Proposition 4.2 of \cite{Me6}.

\end{proof}

We now show that in the case of the left regular representation, that the concepts of singularity and absolute continuity in the real case are related to the complex case.

\begin{lemma}\label{L:realproblemsman} Let $\Gamma$ be a countable discrete group. Let $\rho\colon \Gamma\to \mathcal{O}(\mathcal{H})$ be an orthogonal representation. Then $\rho_{\CC}\ll \lambda_{\Gamma}$ (in the unitary sense) if and only if $\rho\ll \lambda_{\Gamma,\RR}.$ Similarly $\rho_{\CC}\perp \lambda_{\Gamma}$ if and only if $\rho \perp \lambda_{\Gamma,\RR}.$
\end{lemma}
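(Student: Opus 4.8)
The plan is to reduce the statement to the corresponding facts about unitary representations by carefully tracking how the complexification $\rho \mapsto \rho_{\CC}$ interacts with subrepresentations, mutual singularity, and absolute continuity, using the conjugation operator $J$ on $\ell^2(\Gamma)$ together with the operator $C$ on $L(\Gamma)$ introduced above. Two observations drive everything. First, $\lambda_{\Gamma}$ (the complex left regular representation) is the complexification of $\lambda_{\Gamma,\RR}$, and it is \emph{self-conjugate}: the map $J\colon \ell^2(\Gamma)\to\ell^2(\Gamma)$, $(Jf)(x)=\overline{f(x^{-1})}$, is a conjugate-linear involution commuting with the $\Gamma$-action up to the unitary structure, exhibiting $\overline{\lambda_{\Gamma}}\cong\lambda_{\Gamma}$. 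Second, for any orthogonal representation $\rho$, the complexification $\rho_{\CC}$ carries a canonical conjugation $J_{\rho}$ (complex conjugation with respect to the real subspace $\mathcal{H}$) with $\overline{\rho_{\CC}}\cong \rho_{\CC}$ as well, and $\rho$ is recovered as the $+1$-fixed points of $J_{\rho}$.

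First I would handle the absolute-continuity equivalence. If $\rho\ll\lambda_{\Gamma,\RR}$, then applying complexification to the real isometric embedding $\mathcal{H}\hookrightarrow \ell^2(\Gamma,\RR)^{\oplus\infty}$ gives a (complex-linear, $\Gamma$-equivariant) isometric embedding $\mathcal{H}_{\CC}\hookrightarrow\ell^2(\Gamma)^{\oplus\infty}$, so $\rho_{\CC}\ll\lambda_{\Gamma}$. Conversely, suppose $\rho_{\CC}\ll\lambda_{\Gamma}$. Then $\rho_{\CC}$ extends to a normal $*$-representation of $L(\Gamma)$, and I want to produce from an intertwiner $T_{\CC}\colon\mathcal{H}_{\CC}\to \ell^2(\Gamma)^{\oplus\infty}$ one that respects the real structures. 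The key is that both source and target carry conjugations ($J_{\rho}$ and the conjugation built from $J$ on each summand), both intertwining the $\Gamma$-actions; averaging $T_{\CC}$ with $(\text{conj})\circ T_{\CC}\circ J_{\rho}$ produces an equivariant map sending $\mathcal{H}$ into $\ell^2(\Gamma,\RR)^{\oplus\infty}$. To upgrade this to an \emph{isometric} embedding of $\rho$ I would take the polar-decomposition/partial-isometry step exactly as in Lemma \ref{L:realsingularity}: the absolutely continuous assumption on $\rho_{\CC}$ means the relevant spectral projections of $T_{\CC}^*T_{\CC}$ lie in the $\Gamma$-equivariant von Neumann algebra, hence (by the fixed-point-of-$J_{\rho}$ characterization) restrict to real operators on $\mathcal{H}$, so the real polar part gives an isometric $\Gamma$-embedding $\mathcal{H}\hookrightarrow\ell^2(\Gamma,\RR)^{\oplus\infty}$, i.e. $\rho\ll\lambda_{\Gamma,\RR}$. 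Here I can freely invoke Lemma \ref{L:realembedding} to pass to the cyclic case if convenient.

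Next, the singularity equivalence. By Lemma \ref{L:realsingularity}, $\rho\perp\lambda_{\Gamma,\RR}$ is equivalent to $\Hom_{\Gamma}(\rho,\lambda_{\Gamma,\RR})=\{0\}$, and likewise in the unitary setting $\rho_{\CC}\perp\lambda_{\Gamma}$ is equivalent to $\Hom_{\Gamma}^{\CC}(\rho_{\CC},\lambda_{\Gamma})=\{0\}$ (the complex analogue, from \cite{Me6} Proposition 4.2 or its unitary source). So it suffices to show: there is a nonzero bounded $\Gamma$-equivariant real-linear map $\mathcal{H}\to\ell^2(\Gamma,\RR)$ if and only if there is a nonzero bounded $\Gamma$-equivariant complex-linear map $\mathcal{H}_{\CC}\to\ell^2(\Gamma)$. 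The forward direction is trivial: complexify. For the converse, given a nonzero complex intertwiner $S\colon\mathcal{H}_{\CC}\to\ell^2(\Gamma)$, form $S':=\tfrac12\big(S + (\text{conj})\circ S\circ J_{\rho}\big)$ and $S'':=\tfrac{1}{2i}\big(S - (\text{conj})\circ S\circ J_{\rho}\big)$, where $\text{conj}$ on the target is $J$; both are $\Gamma$-equivariant, both map $\mathcal{H}$ into $\ell^2(\Gamma,\RR)$, and $S = S' + iS''$, so at least one of $S',S''$ is nonzero, yielding a nonzero element of $\Hom_{\Gamma}(\rho,\lambda_{\Gamma,\RR})$. (One must check $J\circ S\circ J_{\rho}$ is again complex-linear and equivariant: conjugate-linearity composed twice gives complex-linearity, and $J$, $J_{\rho}$ each intertwine the respective group actions since $\lambda_{\Gamma}$ and $\rho_{\CC}$ are self-conjugate.)

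I expect the main obstacle to be the absolute-continuity direction ``$\rho_{\CC}\ll\lambda_{\Gamma}\implies\rho\ll\lambda_{\Gamma,\RR}$'': the averaging trick alone only gives a real equivariant operator, not an \emph{embedding}, so one genuinely needs the polar-decomposition argument together with the fact that spectral projections of real positive operators are real and that absolute continuity guarantees these projections remain in the correct (normal) equivariant algebra — this is where the self-conjugacy of $\lambda_{\Gamma}$ and the closedness of $L_{\RR}(\Gamma)$ under functional calculus (approximating $\sqrt{\cdot}$ by polynomials, as used repeatedly above) must be combined carefully. The singularity direction, by contrast, is purely formal once one has Lemma \ref{L:realsingularity} in hand.
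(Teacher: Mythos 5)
Your overall strategy---transporting statements about intertwiners across the complexification by means of a conjugation on each side---is a genuinely different route from the paper's. The paper proves the absolute-continuity direction by writing $\rho$ as a direct sum of cyclic representations (Zorn's Lemma) and invoking Lemma \ref{L:realembedding} for each summand, which manufactures the real embedding from the Fourier algebra (positive definite functions, the symmetrization $x\mapsto \frac{1}{2}(x+Cx)$, and Powers--St{\o}rmer); it proves the singularity direction by simply restricting $T\in\Hom_{\Gamma}(\rho_{\CC},\lambda_{\Gamma})$ to $\mathcal{H}$ and noting that $\ell^{2}(\Gamma)\cong\ell^{2}(\Gamma,\RR)^{\oplus 2}$ as a real representation, so that Lemma \ref{L:realsingularity} forces $T\big|_{\mathcal{H}}=0$. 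Your averaging argument is a legitimate and more self-contained alternative, but two points in the execution are wrong as written. First, the conjugation you put on the target is the wrong operator: the paper's $J$, $(Jf)(x)=\overline{f(x^{-1})}$, does \emph{not} commute with $\lambda(g)$ --- one computes that $J\lambda(g)J$ is right translation by $g$, so $J$ intertwines the left regular representation with the right regular representation, and the fixed-point set of $J$ is not $\ell^{2}(\Gamma,\RR)$. What you need is plain entrywise conjugation $\xi\mapsto\overline{\xi}$ (the operator underlying $C$ in the paper), which does commute with every $\lambda(g)$ and whose fixed points are exactly $\ell^{2}(\Gamma,\RR)$. With your $J$, the map $J\circ S\circ J_{\rho}$ is not $\lambda$-equivariant and $S'$ does not land in $\ell^{2}(\Gamma,\RR)$.

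Second, and more substantively, in the absolute-continuity direction the single average $S'$ can have a large kernel --- it can even vanish identically: if $V$ is the complexification of a real equivariant isometry and $T_{\CC}=iV$, then the average of $T_{\CC}$ with its conjugate is $0$ while $S''=V$. So the claim that ``the real polar part of $S'$ gives an isometric embedding of $\mathcal{H}$'' fails. The repair is exactly the device you already use in the singularity part: keep \emph{both} $S'$ and $S''$. For $x\in\mathcal{H}$ one has $T_{\CC}x=S'x+iS''x$ with $S'x,S''x\in\ell^{2}(\Gamma,\RR)^{\oplus\infty}$, so $S'\oplus S''$ is injective on $\mathcal{H}$ whenever $T_{\CC}$ is, and the polar part of $S'\oplus S''$ --- equivariant because $|T|$ and $U=\textnormal{SOT-}\lim_{\varepsilon\to 0}T(|T|+\varepsilon)^{-1}$ are, exactly as in the proof of Lemma \ref{L:realsingularity} --- is an honest equivariant isometry of $\mathcal{H}$ into $\ell^{2}(\Gamma,\RR)^{\oplus\infty}$. (Your worry that absolute continuity is needed to keep the spectral projections in the equivariant algebra is a red herring: equivariance of $|T|$ and of the polar part holds for any bounded equivariant $T$.) With these two corrections your argument goes through and bypasses Lemma \ref{L:realembedding} entirely; the singularity half is correct in substance once the conjugation is fixed, though the paper's restriction-to-$\mathcal{H}$ argument reaches the same conclusion with less machinery.
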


\begin{proof}

Suppose $\rho_{\CC}\ll \lambda_{\Gamma}.$ Applying Zorn's Lemma to write $\rho$ as a direct sum of cyclic representations and applying Lemma \ref{L:realembedding} we see that $\rho\ll \lambda_{\Gamma,\RR}.$ The converse is even easier.

Suppose $\rho_{\CC}\perp\lambda_{\Gamma}.$ If $\mathcal{K}\subseteq \mathcal{H}$ is a closed, linear, $\Gamma$-invariant subspace so that $\rho\big|_{\mathcal{K}}$ embeds into $\lambda_{\Gamma,\RR},$ then by complexification we have that $\rho_{\CC}\big|_{\mathcal{K_{\CC}}}$ embeds into $\lambda_{\Gamma}.$ Conversely, suppose $T\in \Hom_{\Gamma}(\rho_{\CC},\lambda_{\Gamma}).$ Then $T\big|_{\mathcal{H}}$ is a $\Gamma$-equivariant, bounded, real-linear map
\[\mathcal{H}\to \ell^{2}(\Gamma).\]
As $\ell^{2}(\Gamma)\cong \ell^{2}(\Gamma,\RR)^{\oplus 2}$ as a real representation we see that $T\big|_{\mathcal{H}}=0,$ by the previous Lemma. Since $\mathcal{H}$ spans $\mathcal{H}_{\CC}$ (as a complex vector space) we see that  $T=0.$

\end{proof}

The following is proved in the same way as Proposition 4.3 of \cite{Me6}.

\begin{proposition}\label{P:LebesgueDecompositionOR} Let $\Gamma$ be a countable discrete group and $\rho_{j}\colon\Gamma\to \mathcal{O}(\mathcal{H}_{j}),j=1,2$ be two orthogonal representations. Then
\[\rho_{1}=\rho_{1,s}\oplus \rho_{1,c}\]
where $\rho_{1,s}\perp \rho_{2},$ and $\rho_{1,c}\ll\rho_{2}.$

\end{proposition}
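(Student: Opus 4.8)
The plan is to run the proof of the Lebesgue decomposition for unitary representations (Proposition 4.3 of \cite{Me6}) essentially word for word by means of a Zorn's lemma argument; the only ingredient that deserves comment is that absolute continuity with respect to a fixed representation is preserved under orthogonal direct sums. Concretely, I would first record the following: if $(\sigma_i\colon\Gamma\to\mathcal{O}(\mathcal{K}_i))_{i\in I}$ is a family of orthogonal representations with $\sigma_i\ll\rho_2$ for each $i$ and $1\le|I|\le\aleph_0$, then $\bigoplus_{i\in I}\sigma_i\ll\rho_2$. This is immediate from the definition: each $\sigma_i$ embeds into $\rho_2^{\oplus\infty}$, hence $\bigoplus_{i\in I}\sigma_i$ embeds into $\bigoplus_{i\in I}\rho_2^{\oplus\infty}\cong\rho_2^{\oplus\infty}$, using $|I|\cdot\aleph_0=\aleph_0$. (When $\mathcal{H}_1$ is separable all index sets appearing below are countable; in general one runs the same argument with $\rho_2^{\oplus\kappa}$ in place of $\rho_2^{\oplus\infty}$ for $\kappa$ the density character of $\mathcal{H}_1$, which does not change what it means for a subrepresentation of $\rho_1$ to be absolutely continuous with respect to $\rho_2$.)

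Next I would let $\mathcal{S}$ be the collection of nonzero closed $\Gamma$-invariant subspaces $\mathcal{K}\subseteq\mathcal{H}_1$ with $\rho_1|_{\mathcal{K}}\ll\rho_2$, and let $\mathcal{P}$ be the set of all subfamilies $\mathcal{F}\subseteq\mathcal{S}$ consisting of pairwise orthogonal subspaces, ordered by inclusion of families. The union of a chain in $\mathcal{P}$ is again a pairwise orthogonal subfamily of $\mathcal{S}$, so Zorn's lemma produces a maximal $\mathcal{F}_0=\{\mathcal{K}_i\}_{i\in I}\in\mathcal{P}$. I would then set $\mathcal{H}_{1,c}=\bigoplus_{i\in I}\mathcal{K}_i$ and $\mathcal{H}_{1,s}=\mathcal{H}_1\ominus\mathcal{H}_{1,c}$; both are closed and $\Gamma$-invariant since $\rho_1$ is orthogonal, and I write $\rho_{1,c}=\rho_1|_{\mathcal{H}_{1,c}}$, $\rho_{1,s}=\rho_1|_{\mathcal{H}_{1,s}}$, so that $\rho_1=\rho_{1,s}\oplus\rho_{1,c}$.

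Finally I would verify the two required properties. By the direct-sum fact above, $\rho_{1,c}=\bigoplus_{i\in I}\rho_1|_{\mathcal{K}_i}\ll\rho_2$. For singularity, suppose $\rho_{1,s}\not\perp\rho_2$; then by the definition of $\perp$ there are a nonzero subrepresentation of $\rho_{1,s}$ carried by a closed $\Gamma$-invariant subspace $\mathcal{L}\subseteq\mathcal{H}_{1,s}$ and a nonzero subrepresentation of $\rho_2$ isomorphic to it. Embedding $\rho_2$ as the first coordinate of $\rho_2^{\oplus\infty}$ shows $\rho_1|_{\mathcal{L}}\ll\rho_2$, so $\mathcal{L}\in\mathcal{S}$; and $\mathcal{L}$ is orthogonal to $\mathcal{H}_{1,c}$, hence to every $\mathcal{K}_i$, and is nonzero, so $\mathcal{F}_0\cup\{\mathcal{L}\}$ lies in $\mathcal{P}$ and strictly contains $\mathcal{F}_0$, contradicting maximality. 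Therefore $\rho_{1,s}\perp\rho_2$, as wanted.

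I do not expect a real obstacle here: the content of the first step is trivial, the Zorn argument is routine, and the only genuine input peculiar to the orthogonal (as opposed to unitary) setting is the availability of a real-linear polar decomposition, which is already established in the proof of Lemma \ref{L:realsingularity} and could be used to manufacture the subspace $\mathcal{L}$ from a nonzero element of $\Hom_\Gamma(\rho_2,\rho_{1,s})$ — though, as the argument above shows, working directly with the subrepresentation formulation of $\perp$ sidesteps even that.
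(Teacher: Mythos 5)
Your argument is correct and is exactly the route the paper intends: the paper simply asserts that Proposition \ref{P:LebesgueDecompositionOR} ``is proved in the same way as Proposition 4.3 of \cite{Me6},'' namely the Zorn's lemma argument on maximal families of pairwise orthogonal invariant subspaces whose restrictions are absolutely continuous with respect to $\rho_{2}$, which is what you carry out. Your remarks on closure of $\ll$ under countable direct sums and on the (irrelevant here) nonseparable case are accurate, and the maximality argument for $\rho_{1,s}\perp\rho_{2}$ is exactly right.
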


\subsection{Sofic Entropy of Gaussian Actions}\label{S:Gauss2}

In this section we compute  the entropy of Gaussian actions. Let us first start with a very simple corollary of Theorem 4.4 of \cite{Me6}.

\begin{cor}\label{C:zerogaussian}

Let $\Gamma$ be a countable discrete sofic group with sofic approximation $\Sigma.$ Let $\rho\colon\Gamma\to\mathcal{O}(\mathcal{H})$ be an orthogonal representation on a separable Hilbert space $\mathcal{H}.$ Suppose that $\rho\perp \lambda_{\Gamma,\RR}.$ Then if $\Gamma\actson (X_{\rho},\mu_{\rho})$ is the corresponding Gaussian action we have
\[h_{\Sigma,\mu_{\rho}}(X_{\rho},\Gamma)\leq 0.\]

\end{cor}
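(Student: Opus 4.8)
The plan is to invoke Theorem 4.4 of \cite{Me6}, which gives a spectral obstruction to positive sofic entropy: roughly, if a probability measure-preserving action $\Gamma \actson (X,\mu)$ of a sofic group has $h_{\Sigma,\mu}(X,\Gamma) > 0$, then the Koopman representation of $\Gamma$ on $L^2_0(X,\mu) = L^2(X,\mu)\ominus \CC 1$ (or more precisely some piece of it) must contain a copy of, or be non-singular with respect to, the left regular representation $\lambda_\Gamma$. The strategy is therefore to show that the hypothesis $\rho \perp \lambda_{\Gamma,\RR}$ forces the Koopman representation of the Gaussian action $\Gamma \actson (X_\rho,\mu_\rho)$ on $L^2_0(X_\rho,\mu_\rho)$ to be singular with respect to $\lambda_\Gamma$, and then conclude $h_{\Sigma,\mu_\rho}(X_\rho,\Gamma) \leq 0$ by contraposition.

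\textbf{Key steps.} First I would recall the standard Fock space description of the Koopman representation of a Gaussian action: for an orthogonal representation $\rho\colon\Gamma\to\mathcal{O}(\mathcal{H})$, one has a unitary isomorphism
\[
L^2(X_\rho,\mu_\rho) \cong \bigoplus_{n=0}^{\infty} \mathcal{H}_\CC^{\odot n},
\]
the symmetric Fock space over the complexification $\mathcal{H}_\CC$, under which the Koopman representation is $\bigoplus_n (\rho_\CC)^{\odot n}$, the $n$-th symmetric tensor power, and $L^2_0$ corresponds to dropping the $n=0$ summand. Second, I would use the general fact from the representation theory of $L(\Gamma)$ that if $\pi \perp \lambda_\Gamma$ then $\pi \otimes \sigma \perp \lambda_\Gamma$ for \emph{any} unitary representation $\sigma$ of $\Gamma$ (the regular representation is an ideal for the tensor/Fell-absorbing structure — more precisely, $\lambda_\Gamma$ absorbs everything, and its complement in the sense of singularity is closed under tensoring); hence each symmetric power $(\rho_\CC)^{\odot n}$, being a subrepresentation of $\rho_\CC^{\otimes n}$, is singular with respect to $\lambda_\Gamma$ for $n \geq 1$ once $\rho_\CC \perp \lambda_\Gamma$. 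Here I would invoke Lemma \ref{L:realproblemsman} to translate the hypothesis $\rho \perp \lambda_{\Gamma,\RR}$ into $\rho_\CC \perp \lambda_\Gamma$. Third, a countable direct sum of representations each singular with respect to $\lambda_\Gamma$ is again singular with respect to $\lambda_\Gamma$, so the Koopman representation on $L^2_0(X_\rho,\mu_\rho)$ is singular with respect to $\lambda_\Gamma$. Finally, I would apply Theorem 4.4 of \cite{Me6}: positivity of sofic entropy would force a non-singular piece of $\lambda_\Gamma$ inside the Koopman representation, contradicting what we have just shown; therefore $h_{\Sigma,\mu_\rho}(X_\rho,\Gamma) \leq 0$.

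\textbf{Main obstacle.} The delicate point is the tensor-absorption claim: that $\pi\perp\lambda_\Gamma$ implies $\pi\otimes\sigma\perp\lambda_\Gamma$. For a single tensor factor against $\lambda_\Gamma$ itself this is the classical Fell absorption principle ($\sigma\otimes\lambda_\Gamma\cong(\dim\sigma)\cdot\lambda_\Gamma$), but here one needs the ``singular'' half of the dichotomy, phrased in terms of the measure-theoretic/von Neumann algebraic decomposition of representations relative to $L(\Gamma)$; one should argue via the fact that a representation singular to $\lambda_\Gamma$ is exactly one that does not weakly contain (or embed into) $\lambda_\Gamma^{\oplus\infty}$, combined with Fell absorption applied to the $\lambda_\Gamma$-piece. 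I expect this step, together with carefully checking that the symmetric power decomposition is compatible with the singular/absolutely continuous decomposition, to be the technical heart; everything else — the Fock space identification, the countable-sum stability of singularity, and the final contraposition against Theorem 4.4 of \cite{Me6} — is routine. One should also double-check that Theorem 4.4 of \cite{Me6} applies to the Polish model at hand rather than only to compact models, but since the statement is about the Koopman representation, which is model-independent, this should be immediate.
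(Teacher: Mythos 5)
Your overall architecture --- reduce to a spectral statement and quote Theorem 4.4 of \cite{Me6} --- is the right instinct, and it is how the paper proceeds, but the way you feed the Koopman representation into that theorem contains a fatal error. The claim that $\pi\perp\lambda_{\Gamma}$ implies $\pi\otimes\sigma\perp\lambda_{\Gamma}$ for every unitary representation $\sigma$ is false, and in particular the symmetric tensor powers $(\rho_{\CC})^{\odot n}$ need not remain singular with respect to $\lambda_{\Gamma}$ when $\rho_{\CC}$ is. Already for $\Gamma=\ZZ$ there are singular probability measures $\mu$ on $\TT$ whose convolution square $\mu*\mu$ is absolutely continuous with respect to Lebesgue measure (Wiener--Wintner, Saeki); the corresponding representation of $\ZZ$ on $L^{2}(\TT,\mu)$ is singular with respect to $\lambda_{\ZZ}$, yet its tensor square has maximal spectral type $\mu*\mu$ and hence contains a nonzero subrepresentation embeddable into $\lambda_{\ZZ}$. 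Fell absorption only tells you that $\lambda_{\Gamma}\otimes\sigma$ is a multiple of $\lambda_{\Gamma}$; it says nothing about the complementary, singular class being a tensor ideal, and it is not one. This is precisely why the zero-entropy half of the Gaussian computation is delicate even for $\Gamma=\ZZ$: the Koopman representation of a Gaussian action with singular spectral measure can perfectly well acquire regular (Lebesgue) spectrum in the higher chaoses. So your third step --- that all of $L^{2}_{0}(X_{\rho},\mu_{\rho})$ is singular with respect to $\lambda_{\Gamma}$ --- does not follow, and the contraposition cannot be run on the full Koopman representation.

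The fix, which is what the paper does, is to use only the first Wiener chaos. For each $\xi\in\mathcal{H}$ one produces the real-valued function $\omega(\xi)$ on $X_{\rho}$ determined by $u(t\xi)=\exp(2\pi i t\,\omega(\xi))$; by \cite{PetersonSinclair} these lie in $L^{2}(X_{\rho},\mu_{\rho})$ and $\overline{\Span\{\omega(\xi):\xi\in\mathcal{H}\}}$ is $\Gamma$-equivariantly isomorphic to $\mathcal{H}$ itself, hence (after Lemma \ref{L:realproblemsman}) singular with respect to $\lambda_{\Gamma}$. Crucially, the sets $\omega(\xi)^{-1}(E)$ generate the full sigma-algebra of $X_{\rho}$ up to null sets, because the unitaries $u(t\rho(g)\xi)=\exp(2\pi i t\,\omega(\rho(g)\xi))$ generate $L^{\infty}(X_{\rho},\mu_{\rho})$ as a von Neumann algebra. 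Theorem 4.4 of \cite{Me6} is formulated so as to need only this: a family of functions that generates the sigma-algebra and whose $\Gamma$-invariant closed span is singular with respect to the left regular representation. That formulation is exactly what lets one bypass the (false) tensor-singularity statement, since one never has to control the higher symmetric powers at all. If you replace your Fock-space argument by this first-chaos argument, the rest of your proposal --- the translation via Lemma \ref{L:realproblemsman} and the final appeal to Theorem 4.4 of \cite{Me6} --- goes through.
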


\begin{proof} As in proposition \ref{P:cyclic}, for any $\xi\in\mathcal{H}$ we can find a unique
\[\omega(\xi)\colon X_{\rho}\to \RR\]
so that for all $t\in \RR$ we have
\[\exp(2\pi i t \omega(\xi)(x))=u(t\xi)(x)\]
for almost every $x\in X_{\rho}.$  By uniqueness, we have that
\[\omega(\xi+\eta)=\omega(\xi)+\omega(\eta)\]
almost everywhere. By \cite{PetersonSinclair} we have that $\omega(\xi)\in L^{2}(X_{\rho},\mu_{\rho})$ for all $\xi\in \mathcal{H}$ and that in fact
\[\Gamma\actson \overline\Span\{\omega(\xi):\xi\in \mathcal{H}\}\cong \Gamma\actson\mathcal{H}\perp \lambda_{\Gamma}.\]
We have that $L^{\infty}(X_{\rho},\mu_{\rho})$ is generated as a von Neumann algebra by
\[u(t\rho(g)\xi)=\exp(2\pi it \omega(\rho(g)\xi)) \mbox{ for $t\in \RR,\xi\in\mathcal{H},g\in\Gamma$}.\]
From this, it is not hard to argue that the sigma-algebra generated by
\[\{\omega(\xi)^{-1}(E):E\subseteq \CC\mbox{ is Borel},\xi\in\mathcal{H}\}\]
is all measurable sets (up to measure zero).  Thus by Theorem 4.4 of \cite{Me6} and Lemma \ref{L:realproblemsman} we know that
\[h_{\Sigma,\mu_{\rho}}(X_{\rho},\Gamma)\leq 0.\]

\end{proof}

We turn to the computation of sofic entropy of Gaussian actions in the case that $\rho_{\CC}\ll\lambda_{\Gamma}.$ We will need the following general Lemmas.

\begin{lemma}\label{L:generalmicrostates} Let $\Gamma$ be  a countable discrete sofic group with sofic approximation $\Sigma=(\sigma_{i}\colon\Gamma\to S_{d_{i}}).$ Let $X$ be a Polish space with a bounded, compatible (not necessarily complete) metric $\Delta',$ and fix $x_{0}\in X.$ Let $\Gamma\actson X^{\Gamma}$ be the Bernoulli action, and give  $X^{\Gamma}$ the dynamically generating pseudometric
\[\Delta(x,y)=\Delta'(x(e),y(e)).\]
For a finite $E\subseteq\Gamma$ and $x\in X^{d_{i}}$ define $\phi^{(E)}_{x}\colon\{1,\dots,d_{i}\}\to X^{\Gamma}$ by
\[\phi^{(E)}_{x}(j)(g)=\begin{cases}
x(\sigma_{i}(g)^{-1}(j)),& \textnormal{ if $g\in E$}\\
x_{0},& \textnormal{ if $g\notin E$.}
\end{cases}\]
For any finite $F\subseteq\Gamma,$ there is a finite $E_{0}\subseteq\Gamma$ so that if $E\subseteq \Gamma$ is a finite set containing $E_{0},$ then for all $\delta>0$ and for all large $i$ we have
\[\phi^{(E)}_{x}\in\Map(\Delta,F,\delta,\sigma_{i}).\]
\end{lemma}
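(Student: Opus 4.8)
The plan is to estimate $\Delta_2(g\phi^{(E)}_x, \phi^{(E)}_x \circ \sigma_i(g))$ directly for a fixed $g \in F$, using the explicit formula for $\phi^{(E)}_x$, and to show that the only contributions to this quantity come from a small set of indices $j$ on which the sofic approximation fails to be multiplicative or on which $\phi^{(E)}_x(j)$ is ``truncated'' by the $x_0$ default. First I would compute, for $j \in \{1,\dots,d_i\}$ and $h \in \Gamma$,
\[
(g\phi^{(E)}_x)(j)(h) = \phi^{(E)}_x(j)(g^{-1}h), \qquad (\phi^{(E)}_x \circ \sigma_i(g))(j)(h) = \phi^{(E)}_x(\sigma_i(g)^{-1}(j))(h).
\]
Since $\Delta(x,y) = \Delta'(x(e),y(e))$ only reads the $e$-coordinate, what matters is $h = e$: the first expression at coordinate $e$ is $\phi^{(E)}_x(j)(g^{-1})$, which equals $x(\sigma_i(g^{-1})^{-1}(j))$ if $g^{-1} \in E$ and $x_0$ otherwise; the second at coordinate $e$ is $\phi^{(E)}_x(\sigma_i(g)^{-1}(j))(e) = x(\sigma_i(g)^{-1}(j))$ if $e \in E$ and $x_0$ otherwise.

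So I would take $E_0 = \{e\} \cup \{g^{-1} : g \in F\}$ (and insist $E \supseteq E_0$, so both $e$ and each $g^{-1}$ lie in $E$); then for each $g \in F$ the two quantities above are $x(\sigma_i(g^{-1})^{-1}(j))$ and $x(\sigma_i(g)^{-1}(j))$ respectively. These agree whenever $\sigma_i(g^{-1})^{-1}(j) = \sigma_i(g)^{-1}(j)$, equivalently $\sigma_i(g)\sigma_i(g^{-1})^{-1}(j) = j$; this holds for all $j$ outside a set $B_{i,g}$ with $u_{d_i}(B_{i,g}) \to 0$ as $i \to \infty$, by the near-multiplicativity condition in the definition of a sofic approximation (applied with the pair $g, g^{-1}$, together with the fact that $\sigma_i(e)$ is close to the identity, or more carefully: $\sigma_i(g)\sigma_i(g^{-1})$ agrees with $\sigma_i(gg^{-1}) = \sigma_i(e)$ on a set of density $\to 1$, and $\sigma_i(e)$ agrees with the identity on a set of density $\to 1$). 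On $B_{i,g}$ we bound $\Delta(\cdot,\cdot)^2 \le (\diam_{\Delta'} X)^2 =: C^2 < \infty$ since $\Delta'$ is bounded. Hence
\[
\Delta_2(g\phi^{(E)}_x, \phi^{(E)}_x \circ \sigma_i(g))^2 = \frac{1}{d_i}\sum_{j=1}^{d_i} \Delta(\cdots)^2 \le C^2 \, u_{d_i}(B_{i,g}) \xrightarrow{i\to\infty} 0,
\]
uniformly in $x \in X^{d_i}$ and in $E \supseteq E_0$. Taking the max over the finite set $F$, for any $\delta > 0$ there is $i_0$ so that for all $i \ge i_0$, all finite $E \supseteq E_0$, and all $x \in X^{d_i}$ we have $\phi^{(E)}_x \in \Map(\Delta, F, \delta, \sigma_i)$, which is the claim.

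I do not expect a serious obstacle here; the argument is a routine unwinding of definitions. The one point that needs a little care is the bookkeeping with $g$ versus $g^{-1}$ and ensuring that both $e$ and all $g^{-1}$, $g \in F$, are placed in $E_0$ so that neither coordinate gets replaced by the default value $x_0$ — once that is arranged, the whole discrepancy is controlled by the density of the ``bad'' index set where near-multiplicativity fails, which tends to $0$ by soficity, and the bounded diameter of $\Delta'$ makes the contribution of that set negligible. The uniformity in $E$ is automatic because $E_0$ depends only on $F$, and enlarging $E$ beyond $E_0$ never reintroduces $x_0$ into the relevant coordinates.
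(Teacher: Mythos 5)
Your overall strategy --- choose $E_{0}$ so that the relevant coordinates are never replaced by $x_{0}$, reduce to comparing two values of $x$, and bound the discrepancy by $(\diam_{\Delta'}X)^{2}$ times the density of a bad index set controlled by soficity --- is exactly the paper's. But there is a concrete error in your first display: in the definition of $\Map(\Delta,F,\delta,\sigma_{i})$ the expression $\phi\circ\sigma_{i}(g)$ is the literal composition, $(\phi\circ\sigma_{i}(g))(j)=\phi(\sigma_{i}(g)(j))$, not $\phi(\sigma_{i}(g)^{-1}(j))$ as you wrote. (This is the convention forced by approximate equivariance: $(h\cdot\phi^{(E)}_{x}(j))(g)=x(\sigma_{i}(h^{-1}g)^{-1}(j))\approx x(\sigma_{i}(g)^{-1}\sigma_{i}(h)(j))=\phi^{(E)}_{x}(\sigma_{i}(h)(j))(g)$, so the microstate at $j$ should be matched against the microstate at $\sigma_{i}(h)(j)$.) With your reading, the set you must show is small is $\{j:\sigma_{i}(g^{-1})^{-1}(j)\ne\sigma_{i}(g)^{-1}(j)\}$, i.e.\ the set where $\sigma_{i}(g)\sigma_{i}(g^{-1})^{-1}$ fails to fix $j$. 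Soficity does not control this: near-multiplicativity applied to the pair $(g,g^{-1})$ gives $\sigma_{i}(g)\sigma_{i}(g^{-1})\approx\sigma_{i}(e)\approx\id$, i.e.\ $\sigma_{i}(g^{-1})\approx\sigma_{i}(g)^{-1}$, whereas your condition requires $\sigma_{i}(g^{-1})\approx\sigma_{i}(g)$, which fails whenever $g^{2}\ne e$. For instance, take $\Gamma=\ZZ$, $\sigma_{i}$ the genuine homomorphism through $\ZZ/d_{i}\ZZ$ acting by translation, and $g=1$: then $\sigma_{i}(g^{-1})^{-1}(j)=j+1$ while $\sigma_{i}(g)^{-1}(j)=j-1$, so your bad set is everything. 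Hence the sentence claiming $u_{d_{i}}(B_{i,g})\to 0$ ``by near-multiplicativity applied with the pair $g,g^{-1}$'' is false for the set you actually defined (you have silently replaced $\sigma_{i}(g)\sigma_{i}(g^{-1})$ by $\sigma_{i}(g)\sigma_{i}(g^{-1})^{-1}$).

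The fix is immediate and lands you on the paper's computation: with the correct composition, and $E\supseteq E_{0}=\{e\}\cup F^{-1}$, the two values being compared at coordinate $e$ are $x(\sigma_{i}(g^{-1})^{-1}(j))$ and $x(\sigma_{i}(g)(j))$ (up to the negligible set where $\sigma_{i}(e)$ is not the identity), and these agree off $\{j:\sigma_{i}(g^{-1})^{-1}(j)\ne\sigma_{i}(g)(j)\}$, whose density does tend to $0$ precisely because $\sigma_{i}(g^{-1})\sigma_{i}(g)\approx\sigma_{i}(e)\approx\id$. The remaining steps --- the diameter bound, the uniformity in $x$ and in $E\supseteq E_{0}$, and the maximum over the finite set $F$ --- are correct as you wrote them and match the paper.
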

\begin{proof}
For $h\in F,$ we have
\[\Delta_{2}(h\phi^{(E)}_{x},\phi^{(E)}_{x}\circ\sigma_{i}(h))^{2}=\frac{1}{d_{i}}\sum_{j=1}^{d_{i}}\Delta'(\phi^{(E)}_{x}(j)(h^{-1}),\phi^{(E)}_{x}(\sigma_{i}(h)(j))(e))^{2}.\]
If $E\supseteq F\cup F^{-1}\cup\{e\},$ then we have
\[\Delta_{2}(h\phi^{(E)}_{x},\phi^{(E)}_{x}\circ\sigma_{i}(h))^{2}=\frac{1}{d_{i}}\sum_{j:\sigma_{i}(h^{-1})^{-1}(j)\ne \sigma_{i}(h)(j)}\Delta'(x(\sigma_{i}(h^{-1})^{-1}(j)),x(\sigma_{i}(h)(j)))^{2}.\]
Let $M$ be the diameter of $(X,\Delta')$ then we have
\[\Delta_{2}(h\phi^{(E)}_{x}\circ \sigma_{i}(h),h\phi^{(E)}_{x})^{2}\leq Mu_{d_{i}}(\{1\leq j\leq d_{i}:\sigma_{i}(h^{-1})^{-1}(j)\ne \sigma_{i}(h)(j)\})\to 0\]
by soficity. Setting $E_{0}=F\cup F^{-1}\cup\{e\}$ completes the proof.

\end{proof}

\begin{lemma} Let $\Gamma$ be a countable discrete sofic group with sofic approximation $\Sigma=(\sigma_{i}\colon\Gamma\to S_{d_{i}}).$ By Lemma \ref{L:soficextension} extend $\Sigma$ to an approximation sequence $\Sigma=(\sigma_{i}\colon L(\Gamma)\to M_{d_{i}}(\CC))$ so that for $\alpha\in\CC(\Gamma)$
\[\sigma_{i}(\alpha)=\sum_{g\in\Gamma}\widehat{\alpha}(g)\sigma_{i}(g).\]
Fix a finite $F\subseteq\Gamma$ and a  $x\in L(\Gamma),$ and let $x_{i}\in M_{d_{i}}(\CC)$ be such that
\[\sup_{i}\|x_{i}\|_{\infty}<\infty,\]
\[\|\sigma_{i}(x)-x_{i}\|_{2}\to 0.\]
Then the following statements hold.

(i): There is a sequence $C_{i}\subseteq\{1,\dots,d_{i}\}$ so that
\[u_{d_{i}}(C_{i})\to 1\]
with
\[\lim_{i\to \infty}\sup_{\substack{t\in \RR^{F},s\in\RR^{F},\\ j\in C_{i}}}\frac{\left|\|\sigma_{i}(\check{t})x_{i}\sigma_{i}(\check{s})e_{j}\|_{\ell^{2}(d_{i})}^{2}-\|\lambda(\check{t})x\lambda(\check{s})\delta_{e}\|_{2}^{2}\right|}{\|t\|_{\ell^{2}(F)}^{2}\|s\|_{\ell^{2}(F)}^{2}}=0.\]

(ii): There is a sequence $A_{i}\subseteq \{1,\dots,d_{i}\}^{2}$ so that
\[u_{d_{i}}\otimes u_{d_{i}}(A_{i})\to 1,\]
and
\[\lim_{i\to \infty}\sup_{\substack{t\in \RR^{F},s\in\RR^{F},\\ (j,k)\in A_{i}}}\frac{\left|\|\sigma_{i}(\check{t})^{*}x_{i}e_{j}-\sigma_{i}(\check{s})^{*}x_{i}e_{k}\|_{\ell^{2}(d_{i})}^{2}-\|\lambda(\check{t})^{*}x\delta_{e}\|_{2}^{2}-\|\lambda(\check{s})x\delta_{e}\|_{2}^{2}\right|}{(\|t\|_{\ell^{2}(F)}+\|s\|_{\ell^{2}(F)})^{2}}=0.\]

\end{lemma}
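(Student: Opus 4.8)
The plan is to expand each squared norm occurring in the statement bilinearly into a finite sum, indexed by tuples of elements of $F$, whose summands are individual matrix entries of products of the permutation matrices $\sigma_{i}(g)$ and the $x_{i}$, and whose coefficients are monomials in the entries of $t$ and $s$ of absolute value at most the relevant normalizing factor. This makes the supremum over $t,s$ harmless, and reduces everything to two entrywise facts about $\sigma_{i}(w)$ for the finitely many $w\in L(\Gamma)$ built from $x$ and $F$. \emph{(a) Diagonal concentration:} for $w\in L(\Gamma)$, $\tfrac{1}{d_{i}}\sum_{j}|(\sigma_{i}(w))_{jj}-\tau(w)|^{2}\to 0$. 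For $w=\sum_{g}w_{g}g\in\CC(\Gamma)$ this is read off from the two soficity axioms: $\sigma_{i}(e)(j)=j$ and $\sigma_{i}(g)(j)\ne j$ ($g\ne e$) for $u_{d_{i}}$-a.e.\ $j$, so $(\sigma_{i}(w))_{jj}=w_{e}=\tau(w)$ off a set of measure $o(1)$; for general $w$ one picks $w_{0}\in\CC(\Gamma)$ with $\|w-w_{0}\|_{2}$ small and uses $\tfrac{1}{d_{i}}\sum_{j}|(\sigma_{i}(w))_{jj}-(\sigma_{i}(w_{0}))_{jj}|^{2}\le\|\sigma_{i}(w)-\sigma_{i}(w_{0})\|_{2}^{2}\to\|w-w_{0}\|_{2}^{2}$ together with $|\tau(w)-\tau(w_{0})|\le\|w-w_{0}\|_{2}$. \emph{(b) Off-diagonal smallness:} if $\sup_{i}\|N_{i}\|_{2}<\infty$ then $\tfrac{1}{d_{i}^{2}}\sum_{j,k}|(N_{i})_{jk}|^{2}=\tfrac{1}{d_{i}}\|N_{i}\|_{2}^{2}\to 0$. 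Combining (a), resp.\ (b), with Markov's inequality and a diagonalization over a threshold $\varepsilon\downarrow 0$ produces, for each fixed $w$, a subset of $\{1,\dots,d_{i}\}$, resp.\ of $\{1,\dots,d_{i}\}^{2}$, of asymptotically full measure on which $(\sigma_{i}(w))_{jj}$ is uniformly close to $\tau(w)$, resp.\ $(N_{i})_{jk}$ uniformly close to $0$; intersecting over the finitely many relevant $w$ gives $C_{i}$, resp.\ $A_{i}$.

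For (i), expanding bilinearly,
\[\|\sigma_{i}(\check t)x_{i}\sigma_{i}(\check s)e_{j}\|_{\ell^{2}(d_{i})}^{2}=\sum_{g,g',h,h'\in F}t(g)t(g')s(h)s(h')\bigl(\sigma_{i}(h')^{*}x_{i}^{*}\sigma_{i}(g')^{*}\sigma_{i}(g)x_{i}\sigma_{i}(h)\bigr)_{jj},\]
and likewise $\|\lambda(\check t)x\lambda(\check s)\delta_{e}\|_{2}^{2}=\sum_{g,g',h,h'\in F}t(g)t(g')s(h)s(h')\,\tau(h'^{-1}x^{*}g'^{-1}gxh)$, using $\ip{\lambda(g)x\lambda(h)\delta_{e},\lambda(g')x\lambda(h')\delta_{e}}=\tau(h'^{-1}x^{*}g'^{-1}gxh)$. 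Since $\sup_{i}\|x_{i}\|_{\infty}<\infty$, $\|x_{i}-\sigma_{i}(x)\|_{2}\to 0$, and $\sigma_{i}$ is an embedding sequence of $L(\Gamma)$ (so $\|\sigma_{i}(g)^{*}-\sigma_{i}(g^{-1})\|_{2}\to 0$, $\|\sigma_{i}(a)^{*}-\sigma_{i}(a^{*})\|_{2}\to 0$, and products of operator-norm-bounded sequences that converge in $\|\cdot\|_{2}$ again converge in $\|\cdot\|_{2}$), we get $\|\sigma_{i}(h')^{*}x_{i}^{*}\sigma_{i}(g')^{*}\sigma_{i}(g)x_{i}\sigma_{i}(h)-\sigma_{i}(h'^{-1}x^{*}g'^{-1}gxh)\|_{2}\to 0$; with (a) this gives diagonal concentration of the left-hand matrix around $\tau(h'^{-1}x^{*}g'^{-1}gxh)$ for each of the finitely many $(g,g',h,h')\in F^{4}$. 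Taking $C_{i}$ to be the intersection over $F^{4}$ of the corresponding full-measure sets and bounding $|t(g)t(g')s(h)s(h')|\le\|t\|_{\ell^{2}(F)}^{2}\|s\|_{\ell^{2}(F)}^{2}$ yields (i).

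For (ii), write $\|\sigma_{i}(\check t)^{*}x_{i}e_{j}-\sigma_{i}(\check s)^{*}x_{i}e_{k}\|_{\ell^{2}(d_{i})}^{2}$ as the sum of the diagonal terms $\sum_{g,g'\in F}t(g)t(g')(x_{i}^{*}\sigma_{i}(g)\sigma_{i}(g')^{*}x_{i})_{jj}$ and $\sum_{g,g'\in F}s(g)s(g')(x_{i}^{*}\sigma_{i}(g)\sigma_{i}(g')^{*}x_{i})_{kk}$ minus $2\Rea$ of the cross term $\sum_{g,g'\in F}s(g)t(g')(x_{i}^{*}\sigma_{i}(g)\sigma_{i}(g')^{*}x_{i})_{kj}$. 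As above, $x_{i}^{*}\sigma_{i}(g)\sigma_{i}(g')^{*}x_{i}-\sigma_{i}(x^{*}gg'^{-1}x)\to 0$ in $\|\cdot\|_{2}$, with operator norms uniformly bounded. By (a) the two diagonal terms converge, for $j$ (resp.\ $k$) in a full-measure set and uniformly in $t,s$, to $\sum_{g,g'}t(g)t(g')\tau(x^{*}gg'^{-1}x)=\|\lambda(\check t)^{*}x\delta_{e}\|_{2}^{2}$ and $\sum_{g,g'}s(g)s(g')\tau(x^{*}gg'^{-1}x)=\|\lambda(\check s)^{*}x\delta_{e}\|_{2}^{2}$; by (b) applied to $N_{i}=x_{i}^{*}\sigma_{i}(g)\sigma_{i}(g')^{*}x_{i}$, the cross term normalized by $(\|t\|_{\ell^{2}(F)}+\|s\|_{\ell^{2}(F)})^{2}$ tends to $0$ uniformly in $t,s$ off a $u_{d_{i}}\otimes u_{d_{i}}$-full-measure set of pairs, using $|s(g)t(g')|\le\|s\|_{\ell^{2}(F)}\|t\|_{\ell^{2}(F)}\le(\|s\|_{\ell^{2}(F)}+\|t\|_{\ell^{2}(F)})^{2}$ and similarly for the diagonal coefficients. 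Intersecting these finitely many full-measure sets gives $A_{i}$ and (ii).

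The main obstacle is not a single hard step but the bookkeeping needed to make (a) and (b) applicable; the one genuinely delicate point is (a) for $w\notin\CC(\Gamma)$, where $\sigma_{i}(w)$ need not be ``sparse'' and diagonal concentration cannot be read off from the soficity axioms directly, so one must reduce to $\CC(\Gamma)$ via $\|\sigma_{i}(w)-\sigma_{i}(w_{0})\|_{2}\to\|w-w_{0}\|_{2}$ and the domination of the diagonal $\ell^{2}$-norm by the Hilbert--Schmidt norm. The passages from $\sigma_{i}(x)$ to $x_{i}$, the coefficient bounds, and the diagonalizations are all routine.
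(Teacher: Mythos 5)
Your proof is correct, and it reorganizes the argument in a way that differs from the paper's in two respects worth noting. The paper keeps $\check t,\check s$ intact: for (i) it first treats $x\in\CC(\Gamma)$ with $x_{i}=\sigma_{i}(x)$, where soficity gives the \emph{exact} equality $\|\sigma_{i}(\check t)\sigma_{i}(x)\sigma_{i}(\check s)e_{j}\|^{2}=\|\lambda(\check t)x\lambda(\check s)\delta_{e}\|^{2}$ on a set of $j$ of asymptotically full density, and then approximates general $x$ by some $\alpha\in\CC(\Gamma)$, tracking the $\|t\|\|s\|$-dependence of the errors by hand; for the cross term in (ii) it performs an explicit second-moment computation of $\tfrac{1}{d_{i}^{2}}\sum_{j,k}|\ip{x_{i}^{*}\sigma_{i}(\check s)\sigma_{i}(\check t)^{*}x_{i}e_{j},e_{k}}|^{2}$, again first for $\alpha\in\CC(\Gamma)$ and then by approximation, followed by Cauchy--Schwarz. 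You instead expand bilinearly in $t,s$ at the outset, which renders the supremum over $t,s$ harmless and reduces both parts to your facts (a) and (b) about finitely many fixed elements of $L(\Gamma)$. Your (a) is the paper's ``$\CC(\Gamma)$ plus $L^{2}$-approximation'' step in disguise, and your handling of the genuinely delicate point --- reducing general $w$ to $w_{0}\in\CC(\Gamma)$ via $\|\sigma_{i}(w)-\sigma_{i}(w_{0})\|_{2}\to\|w-w_{0}\|_{2}$ and domination of the diagonal $\ell^{2}$-norm by the normalized Hilbert--Schmidt norm --- is correct. Your (b) is a genuine simplification: the paper's computation with the sets $D_{i}$ amounts to verifying $\tfrac{1}{d_{i}^{2}}\sum_{j,k}|N_{jk}|^{2}=\tfrac{1}{d_{i}}\|N\|_{2}^{2}$ by hand in the special case $N=\sigma_{i}(\beta)$, whereas you observe that this identity, together with a uniform Hilbert--Schmidt bound, already kills the cross term with no soficity or reduction to $\CC(\Gamma)$ needed. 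One small remark: your diagonal terms in (ii) converge to $\|\lambda(\check t)^{*}x\delta_{e}\|_{2}^{2}$ and $\|\lambda(\check s)^{*}x\delta_{e}\|_{2}^{2}$ (both starred), while the statement writes $\|\lambda(\check s)x\delta_{e}\|_{2}^{2}$ for the second; the starred version is the one that is actually true (the paper's own proof, which invokes (i), produces it as well), so this is a typo in the statement rather than a gap in your argument.
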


\begin{proof}

(i): We first handle the case that $x\in \CC(\Gamma)$ and $x_{i}=\sigma_{i}(x).$ We may choose a $C_{i}$ so that
\[u_{d_{i}}(C_{i})\to 1,\]
\[\sigma_{i}(\check{t}x\check{s})e_{j}=\sigma_{i}(\check{t})\sigma_{i}(x)\sigma_{i}(\check{s})e_{j},\]
 for $j\in C_{i}.$ By soficity, we may also force that
\[\sigma_{i}(g)e_{j}\ne \sigma_{i}(h)e_{j}\mbox{ for $g\ne h$ in $\supp(\check{t}x\check{s}),$ and $j\in C_{i},$}\]
while still having
\[u_{d_{i}}(C_{i})\to 1.\]
In this case it is  easy  to see that
\[\|\sigma_{i}(\check{t})\sigma_{i}(x)\sigma_{i}(\check{s})e_{j}\|_{\ell^{2}(d_{i})}^{2}=\|\sigma_{i}(\check{t}x\check{s})e_{j}\|_{\ell^{2}(d_{i})}^{2}=\|\lambda(\check{t})x\lambda(\check{s})\delta_{e}\|_{\ell^{2}(d_{i})}^{2}\]
for all $j\in C_{i}.$
	
	Now we handle the general case. Let $\varepsilon>0,$ and choose $\alpha\in \CC(\Gamma)$ so that
\[\|(\lambda(\alpha)-x)\delta_{e}\|_{2}<\varepsilon.\]
Since $\sigma_{i}$ is an approximation sequence by (\ref{E:L2convergence}) we have for all $g,h\in\Gamma$ that
\[\lim_{i\to\infty}\|\sigma_{i}(g)\sigma_{i}(\alpha)\sigma_{i}(h)-\sigma_{i}(g)x_{i}\sigma_{i}(h)\|_{2}=\|\lambda(g)(\lambda(\alpha)-x)\lambda(h)\delta_{e}\|_{2}<\varepsilon.\]
Thus for all large $i$ we have
\begin{align*}
\frac{1}{d_{i}}\sum_{g,h\in F}\sum_{j=1}^{d_{i}}\|\sigma_{i}(g)\sigma_{i}(\alpha)\sigma_{i}(h)e_{j}-\sigma_{i}(g)x_{i}\sigma_{i}(h)e_{j}\|_{\ell^{2}(d_{i})}^{2}&=\sum_{g,h\in F}\|\sigma_{i}(g)\sigma_{i}(\alpha)\sigma_{i}(h)-\sigma_{i}(g)\sigma_{i}(\alpha)\sigma_{i}(h)\|_{L^{2}(M_{d_{i}}(\CC),\tr)}^{2}\\
&<\varepsilon|F|^{2}.
\end{align*}
For such $i,$ we may find a $C_{i}\subseteq \{1,\dots,d_{i}\}$ with
\[u_{d_{i}}(C_{i})\geq (1-\sqrt{\varepsilon}|F|^{2})\]
so that
\[\|\sigma_{i}(g)\sigma_{i}(\alpha)\sigma_{i}(h)e_{j}-\sigma_{i}(g)x_{i}\sigma_{i}(h)e_{j}\|_{\ell^{2}(d_{i})}^{2}<\sqrt{\varepsilon}\]
for all $g,h\in F$ and all large $i,$ and $j\in C_{i}.$ Thus for all $j\in C_{i},$ and all $s,t\in\RR^{F}$
\begin{align*}
\|\sigma_{i}(\check{t})x_{i}\sigma_{i}(\check{s})e_{j}-\sigma_{i}(\check{t})\sigma_{i}(\alpha)\sigma_{i}(\check{s})e_{j}\|_{\ell^{2}(d_{i})}&\leq \sum_{g,h\in\Gamma}|t(g)s(h)|\|\sigma_{i}(g)x_{i}\sigma_{i}(h)e_{j}-\sigma_{i}(g)\sigma_{i}(\alpha)\sigma_{i}(h)e_{j}\|_{\ell^{2}(d_{i})}\\
&\leq \varepsilon^{1/4}|F|^{2}\|t\|_{\ell^{2}(F)}\|s\|_{\ell^{2}(F)}.
\end{align*}
Therefore for $j\in C_{i}$ and all large $i,$ and all $t,s\in\RR^{F}$
\[|\|\sigma_{i}(\check{t})x_{i}\sigma_{i}(\check{s})e_{j}\|_{\ell^{2}(d_{i})}-\|\sigma_{i}(\check{t})\sigma_{i}(\alpha)\sigma_{i}(\check{s})e_{j}\|_{2}|\leq \varepsilon^{1/4}|F|^{2}\|t\|_{\ell^{2}(F)}\|s\|_{\ell^{2}(F)}.\]
By the first part, we can find a $C_{i}'\subseteq\{1,\dots,d_{i}\}$ with
\[u_{d_{i}}(C_{i}')\to 1,\]
 and
\[\|\sigma_{i}(\check{t})\sigma_{i}(\alpha)\sigma_{i}(\check{s})e_{j}\|_{\ell^{2}(d_{i})}^{2}=\|\lambda(\check{t})\lambda(\alpha)\lambda(\check{s})\delta_{e}\|_{2}^{2},\]
for all $j\in C_{i}',$ and $t,s\in \RR^{F}.$ Thus for all large $i$ and all $t,s\in\RR^{F},$ $j\in C_{i}\cap C_{i}',$
\begin{align*}
|\|\sigma_{i}(\check{t})x_{i}\sigma_{i}(\check{s})e_{j}\|_{\ell^{2}(d_{i})}-\|\lambda(\check{t})x\lambda(\check{s})\delta_{e}\|_{\ell^{2}(\Gamma)}|&\leq \varepsilon^{1/4}|F|^{2}\|t\|_{\ell^{2}(F)}\|s\|_{\ell^{2}(F)}+\varepsilon\|\lambda(\check{t})\|_{\infty}\|\lambda(\check{s})\|_{\infty}\\
&\leq  \varepsilon^{1/4}|F|^{2}\|t\|_{\ell^{2}(F)}\|s\|_{\ell^{2}(F)}+\varepsilon\|t\|_{\ell^{1}(F)}\|s\|_{\ell^{1}(F)}\\
&\leq \varepsilon^{1/4}|F|\|t\|_{\ell^{2}(F)}\|s\|_{\ell^{2}(F)}+|F|\varepsilon\|t\|_{\ell^{2}(F)}\|s\|_{\ell^{2}(F)}
\end{align*}
Since
\[u_{d_{i}}(C_{i}\cap C_{i}')\geq (1-2\sqrt{\varepsilon})|F|\]
for all large $i,$ we can use a diagonal argument to complete the proof of $(i).$

(ii): Let $C_{i}$ be as $(i).$ Note that
\[\|\sigma_{i}(\check{t})^{*}x_{i}e_{j}-\sigma_{i}(\check{s})^{*}x_{i}e_{k}\|_{\ell^{2}(d_{i})}^{2}=\|\sigma_{i}(\check{t})^{*}x_{i}e_{j}\|_{\ell^{2}(d_{i})}^{2}+\|\sigma_{i}(\check{s})^{*}x_{i}e_{k}\|_{\ell^{2}(d_{i})}^{2}-2\Rea(\ip{x_{i}^{*}\sigma_{i}(\check{s})\sigma_{i}(\check{t})^{*}x_{i}e_{j},e_{k}}_{\ell^{2}(d_{i})}).\]
We look for $A_{i}$ with  $A_{i}\subseteq C_{i}\times C_{i}.$ By $(i)$ it is enough to find an $A_{i}\subseteq C_{i}\times C_{i}$ with
\[\lim_{i\to\infty}\sup_{s,t\in\RR^{F},(j,k)\in A_{i}}\frac{|\ip{x_{i}^{*}\sigma_{i}(\check{s})\sigma_{i}(\check{t})^{*}x_{i}e_{j},e_{k}}_{\ell^{2}(d_{i})}|}{(\|t\|_{\ell^{2}(F)}+\|s\|_{\ell^{2}(F)})^{2}}=0,\]
\[u_{d_{i}}\otimes u_{d_{i}}(A_{i})\to 1.\]
We prove that
\begin{equation}\label{E:section6dalhgladjg}
\lim_{i\to\infty}\frac{1}{d_{i}^{2}(\|t\|_{\ell^{2}(F)}+\|s\|_{\ell^{2}(F)})^{2}}\sum_{1\leq j,k\leq d_{i}}|\ip{x_{i}^{*}\sigma_{i}(\check{s})\sigma_{i}(\check{t})^{*}x_{i}e_{j},e_{k}}_{\ell^{2}(d_{i})}|=0,
\end{equation}
which will clearly prove the existence of such an $A_{i}.$ Again, as in $(i)$ we first do this for $x_{i}=\sigma_{i}(\alpha)$ for $\alpha\in\CC(\Gamma).$  Let $D_{i}$ be the set of all $j$ so that
\[\sigma_{i}(\alpha)^{*}\sigma_{i}(\check{s})\sigma_{i}(\check{t})^{*}\sigma_{i}(\alpha)e_{j}=\sigma_{i}(\alpha^{*}st^{*}\alpha)e_{j}.\]
We use $o(1)$ for any expression which tends to $0$ as $i\to\infty.$ Then
\[\lim_{i\to \infty} u_{d_{i}}(D_{i})=1,\]
and
\begin{align*}
\frac{1}{d_{i}^{2}}\sum_{1\leq j,k\leq d_{i}}|\ip{\sigma_{i}(\alpha)^{*}\sigma_{i}(\check{s})\sigma_{i}(\check{t})^{*}&\sigma_{i}(\alpha)e_{j},e_{k}}_{\ell^{2}(d_{i})}|^{2}\leq o(1)\\
&+\frac{1}{d_{i}^{2}}\sum_{1\leq j,k\leq d_{i},j\in D_{i}}\sum_{g,h\in \Gamma}(\alpha^{*}\check{s}\check{t}^{*}\alpha)(g)\overline{(\alpha^{*}\check{s}\check{t}^{*}\alpha)(h)}\ip{\sigma_{i}(g)e_{j},e_{k}}_{\ell^{2}(d_{i})}\ip{e_{k},\sigma_{i}(h)e_{j}}_{\ell^{2}(d_{i})}\\
&=o(1)+\frac{1}{d_{i}^{2}}\sum_{j\in D_{i}}\sum_{g\in\Gamma}|(\alpha^{*}\check{s}\check{t}^{*}\alpha)(g)|^{2}\\
&=o(1)+\frac{1}{d_{i}}u_{d_{i}}(D_{i})\|\alpha^{*}\check{s}\check{t}^{*}\alpha\|_{2}^{2}\\
&\leq o(1)+\frac{1}{d_{i}}u_{d_{i}}(D_{i})\|\alpha\|_{\infty}\|s\|_{\ell^{1}(F)}^{2}\|t\|_{\ell^{1}(F)}^{2}\|\alpha\|_{\infty}\\
&\leq o(1)+\frac{1}{d_{i}}|F|^{2}\|t\|_{\ell^{2}(F)}^{2}\|s\|_{\ell^{2}(F)}^{2}\|\alpha\|_{\infty}^{2}(1+o(1))\\
&\to 0.
\end{align*}
This proves (\ref{E:section6dalhgladjg}) by the Cauchy-Schwartz inequality. The general case follows by approximation as in (i).

\end{proof}
For notation, if $E\subseteq\Gamma$ is finite, and $f\colon \RR^{E}\to\CC$ is measurable, we let $f\otimes 1_{\RR^{\Gamma\setminus E}}\colon \RR^{\Gamma}\to\CC$ be defined by
\[f\otimes 1_{\RR^{\Gamma\setminus E}}(x)=f(x\big|_{E}).\]
 We say that $f\in S(\RR^{\Gamma})$ if there is a finite $E\subseteq \Gamma$ and a Schwartz function $f_{0}\colon \RR^{E}\to \CC$ so that
\[f=f_{0}\otimes 1_{\RR^{\Gamma\setminus E}}.\]
By standard Fourier analysis, there is a $\theta\in S(\RR^{E})$ so that
\[f(x)=\int_{\RR^{E}}\exp(2\pi it\cdot x)\,\theta(t)\,dt.\]

We prove that if we choose $x\in\RR^{d_{i}}$ with respect to the Gaussian measure on $p_{i}\RR^{d_{i}}$ then with high probability, the microstate $\phi^{(E)}_{x}$ will approximately preserve the measure $\mu_{p\delta_{e}}$ when integrated against Schwartz functions.

We need the following notation: if $\phi\in X^{d_{i}},\psi\in Y^{d_{i}}$ we define $\phi\otimes\psi\in (X\times Y)^{d_{i}}$ by
\[(\phi\otimes\psi)(j)=(\phi(j),\psi(j)).\]
\begin{lemma}\label{L:concentration}

 Let $\Gamma$ be a countable discrete sofic group with sofic approximation $\Sigma=(\sigma_{i}\colon\Gamma\to S_{d_{i}}).$ Let $p\in L_{\RR}(\Gamma)$ be an orthogonal projection. Fix a sequence of orthogonal projections $p_{i}\in M_{d_{i}}(\RR)$  such that
\[\|p_{i}-\sigma_{i}(p)\|_{2}\to 0.\]
Define the Gaussian measure $\nu_{i}$ on $p_{i}\ell^{2}_{\RR}(d_{i})$ by
\[d\,\nu_{i}(x)=e^{-\pi\|x\|_{\ell^{2}(d_{i})}^{2}}\,dx.\]
Here $dx$ is the Lebesgue measure on $p_{i}\ell^{2}_{\RR}(d_{i}).$ Let $\phi^{(E)}_{x}$ be defined as in Lemma \ref{L:generalmicrostates} for $X=\RR,x_{0}=0.$ Let $\mu_{p\delta_{e}}$ be defined as in Proposition \ref{P:cyclic}. Let $F\subseteq E$ be finite subsets of $\Gamma$ and $\delta>0.$ Then for any compact Hausdorff space $Y,$  sequence $\psi_{i}\in Y^{d_{i}},$ and $f\in S(\RR^{F})$ $, g\in C(Y),$
\[\nu_{i}\left(\left\{x\in p_{i}\RR^{d_{i}}:\left|\int f\otimes 1_{\RR^{\Gamma\setminus F}}\otimes  g\,d (\phi^{(E)}_{x}\otimes \psi_{i})_{*}(u_{d_{i}})-\int_{\RR^{\Gamma}}f\otimes 1_{\RR^{\Gamma\setminus F}}\,d\mu_{p\delta_{e}}\int g\,d(\psi_{i})_{*}(u_{d_{i}})\right|>\delta\right\}\right)\to 0.\]

\end{lemma}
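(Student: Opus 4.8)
The strategy is a concentration-of-measure argument: show that the random variable $x \mapsto \int f\otimes 1 \otimes g\, d(\phi^{(E)}_x \otimes \psi_i)_*(u_{d_i})$ has expectation (over $\nu_i$) converging to $\int f\otimes 1\, d\mu_{p\delta_e} \cdot \int g\, d(\psi_i)_*(u_{d_i})$, and variance tending to $0$, so Chebyshev finishes. First I would unwind the integral. Since $f = f_0 \otimes 1$ with $f_0$ Schwartz on $\RR^F$, write $f_0(y) = \int_{\RR^F} \exp(2\pi i t\cdot y)\,\theta(t)\,dt$ with $\theta \in S(\RR^F)$. Then
\[
\int f\otimes 1 \otimes g\, d(\phi^{(E)}_x \otimes \psi_i)_*(u_{d_i}) = \frac{1}{d_i}\sum_{j=1}^{d_i} g(\psi_i(j)) \int_{\RR^F}\exp\!\Big(2\pi i \sum_{h\in F} t(h)\, x(\sigma_i(h)^{-1}(j))\Big)\theta(t)\, dt,
\]
using that $E\supseteq F$ so the $F$-coordinates of $\phi^{(E)}_x(j)$ are exactly $x(\sigma_i(h)^{-1}(j))$. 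The inner exponential, as a function of $x$, is $\exp(2\pi i \langle \sigma_i(\check t)^* e_j', x\rangle)$ for a suitable vector (here $e_j'$ relabels the permuted index); integrating against the Gaussian $\nu_i$ on $p_i\RR^{d_i}$ gives $\exp(-\pi \|p_i \sigma_i(\check t)^* e_j\|_{\ell^2(d_i)}^2)$ by the standard Gaussian Fourier transform on the subspace $p_i\RR^{d_i}$.

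**Computing the expectation.** Taking $\EE_{\nu_i}$ and using Fubini,
\[
\EE_{\nu_i}\!\left[\int f\otimes 1\otimes g\, d(\phi^{(E)}_x\otimes\psi_i)_*(u_{d_i})\right] = \frac{1}{d_i}\sum_{j=1}^{d_i} g(\psi_i(j)) \int_{\RR^F} e^{-\pi\|p_i\sigma_i(\check t)^* e_j\|_{\ell^2(d_i)}^2}\,\theta(t)\, dt.
\]
Now I would apply part (i) of the preceding Lemma with $x$ there taken to be $p$ (and $x_i = p_i$): it gives a set $C_i$ with $u_{d_i}(C_i)\to 1$ on which $\|\sigma_i(\check t) p_i e_j\|^2$ — and hence by the adjoint/self-adjointness of $p$ and the symmetry built into part (i), $\|p_i \sigma_i(\check t)^* e_j\|^2$ — is uniformly close to $\|\lambda(\check t) p\,\delta_e\|_2^2 = \|\sum_h t(h)\rho(h)(p\delta_e)\|^2$, uniformly in $t\in\RR^F$ after normalizing by $\|t\|^2$. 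Since $\theta$ is Schwartz, the rapid decay of $\theta(t)$ controls the contribution of large $t$, so this uniform closeness (plus boundedness of $g$) upgrades to: the inner integral over $\RR^F$ is, for $j\in C_i$, within $o(1)$ of $\int_{\RR^F} e^{-\pi\|\sum_h t(h)\rho(h)(p\delta_e)\|^2}\theta(t)\,dt = \int_{\RR^F} f_0(t)$-transform $= \int_{\RR^\Gamma} f\otimes 1\, d\mu_{p\delta_e}$ by Proposition \ref{P:cyclic} applied to the cyclic representation generated by $p\delta_e$. The indices outside $C_i$ contribute $o(1)$ since $\|g\|_\infty \|\theta\|_{L^1}$ bounds each term and $u_{d_i}(\{1,\dots,d_i\}\setminus C_i)\to 0$. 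Hence the expectation converges to the target.

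**Variance.** For the second moment,
\[
\EE_{\nu_i}\!\left[\Big|\int f\otimes 1\otimes g\, d(\cdots)\Big|^2\right] = \frac{1}{d_i^2}\sum_{j,k} g(\psi_i(j))\overline{g(\psi_i(k))}\int_{\RR^F}\!\!\int_{\RR^F}\EE_{\nu_i}\!\left[e^{2\pi i\langle \sigma_i(\check t)^* e_j - \sigma_i(\check s)^* e_k,\, x\rangle}\right]\theta(t)\overline{\theta(s)}\, dt\, ds,
\]
and the Gaussian expectation is $\exp(-\pi\|p_i\sigma_i(\check t)^* e_j - p_i\sigma_i(\check s)^* e_k\|_{\ell^2(d_i)}^2)$. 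Here part (ii) of the preceding Lemma is exactly what is needed: on a set $A_i \subseteq \{1,\dots,d_i\}^2$ with $u_{d_i}\otimes u_{d_i}(A_i)\to 1$, this norm-squared is uniformly (in $t,s$, after normalizing by $(\|t\|+\|s\|)^2$) close to $\|\lambda(\check t)^* p\delta_e\|_2^2 + \|\lambda(\check s) p\delta_e\|_2^2$, so the exponential factors as a product and the double sum over $(j,k)\in A_i$ factors (up to $o(1)$) as the square of the first-moment expression; the pairs off $A_i$ contribute $o(1)$. Therefore $\Var_{\nu_i} \to 0$, and Chebyshev's inequality gives the claim.

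**Main obstacle.** The routine mechanics (Fubini, Gaussian Fourier transform on a subspace, the explicit Fourier description of $\mu_{p\delta_e}$) are all in hand. The real work is the interchange of the uniform-in-$(t,s)$ estimates from the preceding Lemma — which are only quadratic bounds with no decay — with the integration against $\theta$ and $\overline\theta$: one must split $\RR^F$ (or $\RR^F\times\RR^F$) into a large ball where the uniform approximation applies and a tail where Schwartz decay of $\theta$ makes the contribution negligible uniformly in $i$, and confirm that the relabeling between $\sigma_i(\check t)p_i e_j$ and $p_i\sigma_i(\check t)^* e_j$ (using $p = p^*$ and that $\sigma_i$ respects the involution up to $\|\cdot\|_2$-small error) does not spoil the pointwise-on-$C_i$, resp. $A_i$, estimates. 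This bookkeeping, together with carefully tracking that the exceptional sets $C_i, A_i$ are chosen independently of $t,s$, is where care is required; everything else is a direct computation.
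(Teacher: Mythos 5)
Your proposal is correct and follows essentially the same route as the paper: compute the first and second moments over $\nu_{i}$ via the Fourier representation $f_{0}(y)=\int\exp(2\pi i t\cdot y)\theta(t)\,dt$ and the Gaussian Fourier transform on the subspace $p_{i}\RR^{d_{i}}$, invoke parts (i) and (ii) of the preceding lemma with $x=p$, $x_{i}=p_{i}$ to get uniform-in-$(t,s)$ control on the good sets $C_{i}$, $A_{i}$, use the integrability of $|\theta(t)|\,\|t\|_{\ell^{2}(F)}^{2}$ to absorb the quadratic normalization and the bound $\|\theta\|_{1}$ off the good sets, and finish with Chebyshev. The technical points you flag (the tail splitting against $\theta$, and the passage from $\|p_{i}\sigma_{i}(\check t)^{*}e_{j}\|$ to $\|\lambda(\check t)p\delta_{e}\|_{2}$ via $\tau(p\lambda(\check t)^{*}\lambda(\check t)p)=\tau(\lambda(\check t)p\lambda(\check t)^{*})$) are exactly the ones the paper addresses.
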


\begin{proof}

Define $G\colon p_{i}\RR^{d_{i}}\to \RR$ by
\[G(x)=\int f\otimes 1_{\RR^{\Gamma\setminus E}}\otimes g\,d(\phi^{(E)}_{x}\otimes \psi_{i})_{*}(u_{d_{i}}).\]
We will show that
\begin{equation}\label{E:asymptoticexpecation}
\left|\int_{p_{i}\RR^{d_{i}}} G\,d\nu_{i}-\left(\int_{X}f\otimes 1_{\RR^{\Gamma\setminus E}}\,d\mu_{p\delta_{e}}\right)\left(\int_{Y}g\,d(\psi_{i})_{*}(u_{d_{i}})\right)\right|\to 0
\end{equation}
and
\begin{equation}\label{E:asymptoticmoments}
\left|\int_{p_{i}\RR^{d_{i}}}|G|^{2}\,d\nu_{i}-\left|\int_{X}f\otimes 1_{\RR^{\Gamma\setminus E}}\,d\mu_{p\delta_{e}}\right|^{2}\left|\int_{Y}g\,d(\psi_{i})_{*}(u_{d_{i}})\right|^{2}\right|\to 0.
\end{equation}
As
\[\left\|G-\int G\,d\nu_{i}\right\|_{L^{2}(d\nu_{i})}^{2}=\int_{p_{i}\RR^{d_{i}}}|G|^{2}\,d\nu_{i}-\left|\int_{p_{i}\RR^{d_{i}}}G\,d\nu_{i}\right|^{2},\]
the Lemma will then follow from Chebyshev's inequality.

Write
\[f(x)=\int_{\RR^{F}}\exp(2\pi i t\cdot x)\theta(t)\,dt\]
with $\theta\in S(\RR^{F}).$
Note that by Proposition \ref{P:cyclic} and by the fact that $\theta\in L^{1}(\RR^{F}),$
\begin{equation}\label{E:expectationcomputation}
\int_{\RR^{\Gamma}}f\otimes 1_{\RR^{\Gamma\setminus E}}\,d\mu_{p\delta_{e}}=\int_{\RR^{F}}\int_{\RR^{\Gamma}}\exp(2\pi i t\cdot x)\theta(t)\,d\mu_{p\delta_{e}}dt=\int_{\RR^{F}}\theta(t)\exp(-\pi\|\lambda(\check{t})p\|_{2}^{2})\,dt.
\end{equation}

We have
\begin{align*}
\int_{p_{i}\RR^{d_{i}}}G(x)\,d\nu_{i}(x)&=\frac{1}{d_{i}}\sum_{j=1}^{d_{i}}\int_{p_{i}\RR^{d_{i}}}\int_{\RR^{F}}\theta(t)\exp(2\pi i t\cdot\phi^{(E)}_{x}(j))g(\psi_{i}(j))\,dt\,d\nu_{i}(x)\\
&=\frac{1}{d_{i}}\sum_{j=1}^{d_{i}}\int_{\RR^{F}}\int_{p_{i}\RR^{d_{i}}}\theta(t)\exp(2\pi i t\cdot \phi^{(E)}_{x}(j))g(\psi_{i}(j))\,d\nu_{i}(x)\,dt
\end{align*}
the interchanges of integrals being valid as $g$ is bounded and $\theta\in L^{1}(\RR^{F}).$ If $E\supseteq F,$ then
\[t\cdot \phi^{(E)}_{x}(j)=\ip{\sigma_{i}(\check{t})x,e_{j}}_{\ell^{2}(d_{i})}.\]
Thus
\begin{align*}
\int_{p_{i}\RR^{d_{i}}}G(x)\,d\nu_{i}(x)&=\frac{1}{d_{i}}\sum_{j=1}^{d_{i}}\int_{\RR^{F}}\int_{p_{i}\RR^{d_{i}}}\theta(t)\exp(2\pi i \ip{\sigma_{i}(\check{t})x,e_{j}}_{\ell^{2}(d_{i},\RR)})g(\psi_{i}(j))\,d\nu_{i}(x)\,dt\\
&=\frac{1}{d_{i}}\sum_{j=1}^{d_{i}}\int_{\RR^{F}}\int_{p_{i}\RR^{d_{i}}}\theta(t)\exp(2\pi i\ip{x,p_{i}\sigma_{i}(\check{t})^{*}e_{j}}_{\ell^{2}(d_{i},\RR)})g(\psi_{i}(j))\,d\nu_{i}(x)\,dt\\
&=\frac{1}{d_{i}}\sum_{j=1}^{d_{i}}\int_{\RR^{F}}\theta(t)\exp(-\pi\|p_{i}\sigma_{i}(\check{t})^{*}e_{j}\|_{\ell^{2}(d_{i})}^{2})g(\psi(j))\,dt.
\end{align*}
Here we are using that
\[\int_{p_{i}\RR^{d_{i}}}\exp(2\pi i\ip{x,p_{i}\sigma_{i}(t)^{*}e_{j}}_{\ell^{2}(d_{i},\RR)})\,d\nu_{i}(x)=\exp(-\pi \|p_{i}\sigma_{i}(\check{t})^{*}e_{j}\|_{\ell^{2}(d_{i})}^{2}),\]
(this is an obvious generalization of Proposition 8.24 in \cite{Folland}).
By the preceding Lemma, there is a $C_{i}\subseteq \{1,\dots,d_{i}\}$ with
\begin{equation}\label{E:largeset}
\frac{|C_{i}|}{d_{i}}\to 1
\end{equation}
and
\[\limsup_{i\to\infty}\sup_{\substack{j\in C_{i}},\\ t\in\RR^{F}}\frac{\left|\|p_{i}\sigma_{i}(\check{t})^{*}e_{j}\|_{\ell^{2}(d_{i})}^{2}-\|\lambda(\check{t})p\|_{2}^{2}\right|}{\|t\|_{2}^{2}}=0.\]
As
\[\|\lambda(\check{t})p\delta_{e}\|_{2}^{2}=\tau(p\lambda(\check{t})^{*}\lambda(\check{t})p)=\tau(\lambda(\check{t})p\lambda(\check{t})^{*})=\|p\lambda(\check{t})^{*}\delta_{e}\|_{2}^{2},\]
 we have
\begin{equation}\label{E:arrghbound1}
\sup_{j\in C_{i}}\left|\int_{\RR^{F}}\theta(t)\exp(-\pi\|p_{i}\sigma_{i}(\check{t})^{*}e_{j}\|_{\ell^{2}(d_{i}) }^{2})\,dt-\int_{\RR^{F}}\theta(t)\exp(-\pi\|\lambda(\check{t})p\delta_{e}\|^{2})\,dt \right|\leq o(1)\int_{\RR^{F}}|\theta(t)|\|t\|_{\ell^{2}(F)}^{2}\,dt,
\end{equation}
where we use $o(1)$ for any expression that goes to zero as $i\to \infty.$
Additionally,
\begin{equation}\label{E:arrgghbound2}
\sup_{j\in \{1,\dots,d_{i}\}\setminus C_{i}}\left|\int_{\RR^{F}}\theta(t)\exp(-\pi\|p_{i}\sigma_{i}(\check{t})^{*}e_{j}\|_{\ell^{2}(d_{i})}^{2})\right|\leq \|\theta\|_{1}.
\end{equation}
Since
\[\int_{\RR^{F}}\theta(t)\|t\|_{\ell^{2}(F)}^{2}\,dt<\infty\]
 equations $(\ref{E:arrghbound1}),(\ref{E:arrgghbound2}),(\ref{E:largeset})$ and the fact that $g$ is bounded imply that
\[\left|\int_{p_{i}\RR^{d_{i}}}G(x)\,d\nu_{i}(x)-\int_{\RR^{F}}\theta(t)\exp(-\pi\|\lambda(\check{t})p\|_{2}^{2})\,dt\int g\,d\psi_{*}(u_{d_{i}})\right|\to 0.\]
By (\ref{E:expectationcomputation}) we have proved (\ref{E:asymptoticexpecation}).

	We now turn to the proof of $(\ref{E:asymptoticmoments}).$ By the same computations as above,
\begin{align*}
\int_{p_{i}\RR^{d_{i}}}|G|^{2}\,d\nu_{i}&=\frac{1}{d_{i}^{2}}\sum_{1\leq j,k\leq d_{i}}\int_{p_{i}\RR^{d_{i}}}\int_{\RR^{F}}\int_{\RR^{F}}\theta(t)\overline{\theta(s)}e^{2\pi i (\ip{\sigma_{i}(\check{t})x,e_{j}}_{\ell^{2}(d_{i})}-\ip{\sigma_{i}(\check{s})x,e_{k}}_{\ell^{2}(d_{i})})}g(\psi_{i}(j))\overline{g(\psi_{i}(k))}\,dt\,ds\,d\nu_{i}(x)\\
&=\frac{1}{d_{i}^{2}}\sum_{1\leq j,k\leq d_{i}}\int_{\RR^{F}}\int_{\RR^{F}}\theta(t)\overline{\theta(s)}e^{-\pi(\|p_{i}\sigma_{i}(\check{t})^{*}e_{j}-p_{i}\sigma_{i}(\check{s})^{*}e_{k}\|_{\ell^{2}(d_{i})}^{2})}g(\psi_{i}(j))\overline{g(\psi_{i}(k))}\,dt\,ds.
\end{align*}
Again the interchanges of integrals are valid as $\theta\in L^{1}(\RR^{F}).$ By the preceding Lemma, there are $A_{i}\subseteq\{1,\dots,d_{i}\}^{2}$ so that
\begin{equation}\label{E:otherbigset}
u_{d_{i}}\otimes u_{d_{i}}(A_{i})\to 1
\end{equation}
and
\[\lim_{i\to\infty}\sup_{\substack{(j,k)\in A_{i},\\ t,s\in\RR^{F}}}\frac{\left|\|p_{i}\sigma_{i}(\check{t})^{*}e_{j}-p_{i}\sigma_{i}(\check{s})^{*}e_{j}\|_{2}^{2}-\|\lambda(\check{t})p\delta_{e}\|_{2}^{2}-\|\lambda(\check{s})p\delta_{e}\|_{2}^{2}\right|}{(\|t\|_{\ell^{2}(F)}+\|s\|_{\ell^{2}(F)})^{2}}\,dt=0.\]
Thus
\begin{equation}\label{E:arghbound3}
\sup_{(j,k)\in A_{i}}\left|\int_{\RR^{F}}\int_{\RR^{F}}\theta(t)\overline{\theta(s)}e^{-\pi(\|p_{i}\sigma_{i}(\check{t})^{*}e_{j}-p_{i}\sigma_{i}(\check{s})^{*}e_{k}\|_{\ell^{2}(d_{i})}^{2})}\,dt\,ds-\left|\int_{\RR^{F}}\theta(t)e^{(-\pi\|p_{i}\sigma_{i}(\check{t})^{*}e_{j}\|_{\ell^{2}(d_{i})}^{2})}\,dt\right|^{2}\right|\to 0,
\end{equation}
as
\[\int_{\RR^{F}}\int_{\RR^{F}}|\theta(t)||\theta(s)|(\|t\|_{\ell^{2}(F)}+\|s\|_{\ell^{2}(F)})^{2}\,dt\,ds<\infty.\]
Additionally,
\begin{equation}\label{E:argbound4}
\sup_{(j,k)\in\{1,\dots,d_{i}\}^{2}\setminus (A_{i}\cap C_{i}\times C_{i})}\left|\int_{\RR^{F}}\int_{\RR^{F}}\theta(t)\overline{\theta(s)}e^{-\pi\left(\|p_{i}\sigma_{i}(t)^{*}e_{j}\|_{\ell^{2}(d_{i})}^{2}-2\ip{\sigma_{i}(s)p_{i}\sigma_{i}(t)^{*}e_{j},e_{k}}_{\ell^{2}(d_{i})}+\|p_{i}\sigma_{i}(t)e_{k}\|_{\ell^{2}(d_{i})}^{2}\right)}\right|\end{equation}
\[\leq \|\theta\|_{1}^{2}.\]

Equations $(\ref{E:arrghbound1}),(\ref{E:argbound4}),(\ref{E:otherbigset}),(\ref{E:largeset})$ and the fact that $g$ is bounded imply that
\[\left|\int_{p_{i}\RR^{d_{i}}}|G|^{2}\,d\nu_{i}-\left|\int_{\RR^{F}}\theta(t)\exp(-\pi\|\lambda(\check{t})p\delta_{e}\|_{2}^{2})\,dt\right|^{2}\cdot \frac{1}{d_{i}^{2}}\sum_{1\leq j,k\leq d_{i}}g(\psi_{i}(j))\overline{g(\psi_{i}(k))}\right|\to 0.\]
Since
\[\frac{1}{d_{i}^{2}}\sum_{1\leq j,k\leq d_{i}}g(\psi_{i}(j))\overline{g(\psi_{i}(k)}=\left|\int g\,d\psi_{*}(u_{d_{i}})\right|^{2}\]
we have proved $(\ref{E:asymptoticmoments}).$

\end{proof}

\begin{lemma}Let $A$ be an infinite set.   Then the space $S(\RR^{A})$ generates $C_{b}(\RR^{A})$ in the sense of Definition \ref{D:generating}.

\end{lemma}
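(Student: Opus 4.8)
The plan is to verify Definition~\ref{D:generating} directly, with the universal constant $A=1$; in fact every approximant will be produced inside $S(\RR^A)$ itself, not merely its span. So fix $g\in C_b(\RR^A)$, a compact set $K\subseteq\RR^A$, and $\varepsilon>0$. The one substantive step is to reduce $g$, on $K$, to a function of finitely many coordinates. Since $g$ is continuous and the cylinder sets $\pi_E^{-1}(V)$ (with $E\subseteq A$ finite, $V\subseteq\RR^E$ open, and $\pi_E\colon\RR^A\to\RR^E$ the coordinate projection) form a base for the topology, each $x\in K$ has a neighborhood $U_x=\pi_{E_x}^{-1}(V_x)$ of this form on which $g$ varies by less than $\varepsilon/2$. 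Using compactness of $K$, I would pass to a finite subcover $U_{x_1},\dots,U_{x_n}$, set $E=\bigcup_{i=1}^n E_{x_i}$ (still finite), rewrite $U_{x_i}=\pi_E^{-1}(V_i)$ for appropriate open $V_i\subseteq\RR^E$, and observe that $L:=\pi_E(K)$ is compact in $\RR^E$ and is covered by $\{V_i\}_{i=1}^n$.

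Next I would build the approximant on $\RR^E$ from a smooth partition of unity $\{\chi_i\}_{i=1}^n$ of the standard type associated to a finite open cover of a compact subset of Euclidean space: $\chi_i\in C_c^\infty(\RR^E)$, $\supp\chi_i\subseteq V_i$, $0\le\chi_i$, $\sum_{i=1}^n\chi_i\le 1$ everywhere, and $\sum_{i=1}^n\chi_i\equiv 1$ on a neighborhood of $L$. Setting $c_i=g(x_i)$ and $\psi=\sum_{i=1}^n c_i\chi_i$ produces a compactly supported smooth (hence Schwartz) function on $\RR^E$, and since $0\le\chi_i$ and $\sum_i\chi_i\le 1$ we get $\|\psi\|_\infty\le\max_i|c_i|\le\|g\|$. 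The candidate is then
\[
f:=\psi\otimes 1_{\RR^{A\setminus E}}\in S(\RR^A),
\]
which lies in $C_b(\RR^A)$ with $\|f\|=\|\psi\|_\infty\le\|g\|$, so the norm requirement of Definition~\ref{D:generating} holds automatically with $A=1$.

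Finally I would check the approximation on $K$. For $x\in K$ put $z=\pi_E(x)$; then $z\in L$, so $\sum_{i=1}^n\chi_i(z)=1$, and hence
\[
f(x)-g(x)=\psi(z)-g(x)=\sum_{i=1}^n\bigl(c_i-g(x)\bigr)\chi_i(z).
\]
The only indices $i$ contributing to this sum satisfy $\chi_i(z)\ne 0$, hence $z\in V_i$, hence $x\in U_{x_i}$, so $|c_i-g(x)|=|g(x_i)-g(x)|<\varepsilon/2$ by the choice of $U_{x_i}$. Summing against $\sum_i\chi_i(z)=1$ gives $|f(x)-g(x)|<\varepsilon/2$ for every $x\in K$, i.e. $\|f\big|_K-g\big|_K\|<\varepsilon$, as required.

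The hard part is really contained in the first paragraph: continuity of $g$ only says that \emph{near each point} of $K$ the function $g$ depends on finitely many coordinates, and the content of the argument is upgrading this to a single finite coordinate set $E$ that works uniformly on $K$. Compactness of $K$ is exactly what delivers this, and compactness of the projection $\pi_E(K)$ is what lets the partition of unity be chosen with compact support (so that $\psi$ is Schwartz). The remaining ingredients — the existence of finite smooth partitions of unity on $\RR^E$, and the elementary inclusion $C_c^\infty(\RR^E)\subseteq\mathcal{S}(\RR^E)$ — are entirely standard.
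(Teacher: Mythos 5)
Your argument is correct, and it verifies Definition \ref{D:generating} with constant $1$. The overall strategy is the same as the paper's --- reduce to finitely many coordinates over the compact set $K$, then replace the resulting function on $\RR^{E}$ by something in $C_{c}^{\infty}(\RR^{E})\subseteq S(\RR^{E})$ --- but your implementation of both steps is different. The paper first invokes the Stone--Weierstrass theorem (applied to the subalgebra of $C(K)$ of functions depending on finitely many coordinates) to produce a finitely-dependent uniform approximant $\psi$ on $K$, and then separately cites the ``well known'' fact that a continuous function on a compact subset of $\RR^{E}$ can be approximated by a $C_{c}^{\infty}$ function with controlled sup norm; this yields a two-stage $2\varepsilon$ estimate. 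You instead extract the finite coordinate set $E$ directly from compactness of $K$ together with the fact that cylinder sets form a base for the product topology, and then build the smooth approximant in one stroke as $\sum_{i}g(x_{i})\chi_{i}$ for a compactly supported smooth partition of unity subordinate to the cover and summing to $1$ near $\pi_{E}(K)$. Your route is more self-contained and elementary (no Stone--Weierstrass), and it has the additional merit that the norm bound $\|f\|\leq\|g\|$ falls out automatically from $0\leq\chi_{i}$ and $\sum_{i}\chi_{i}\leq 1$, whereas in the paper's version the inequalities $\|\psi\|_{C(K)}\leq\|f\big|_{K}\|_{C(K)}$ and $\|\widetilde{\phi}\|_{C_{b}(\RR^{E})}\leq\|f\|_{C_{b}(\RR^{A})}$ require a small extra truncation or rescaling argument that is left implicit. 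The one point worth being explicit about in your write-up is that the cylinder sets $V_{i}\subseteq\RR^{E}$ need not be bounded, so the partition of unity must be taken in the standard form where each $\chi_{i}$ has \emph{compact} support contained in $V_{i}$ (built from finitely many bump functions on small balls covering $\pi_{E}(K)$); this is exactly the classical statement for a compact set and a finite open cover, so no difficulty arises.
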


\begin{proof}

	 Let $\varepsilon>0,f\in C_{b}(\RR^{A})$ and $K\subseteq \RR^{A}$ compact. We say that a function $\psi\colon K\to \CC$ depends upon finitely many coordinates if there is a finite $E\subseteq A$ so that if $x,y\in K$ and
\[x(a)=y(a)\mbox{ for all $a\in E$}\]
then
\[f(x)=f(y).\]

By the Stone-Weierstrass Theorem there is a function $\psi\colon K\to \CC$ depending upon finitely many coordinates so that
\[\|\psi\|_{C(K)}\leq \|f\big|_{K}\|_{C(K)}\leq \|f\|_{C_{b}(X)}\]
and
\[\|\psi-f\big|_{K}\|_{C(K)}<\varepsilon.\]
Let $E\subseteq A$ be such that if $x,y\in K$ and
\[x(a)=y(a)\mbox{ for all $a\in E$}\]
then $\psi(x)=\psi(y).$ Let
\[K_{E}=\{x\in \RR^{E}:\mbox{ there is a $y\in K$ with $x(a)=y(a)$  for all $a\in E$}\}.\]
 Then $K_{E}$ is a compact, being the continuous image of $K$ under the projection map $\RR^{A}\to \RR^{E}.$ There is a well-defined function $\widetidle{\psi}\colon K_{E}\to\CC$ such that
\[\widetilde{\psi}(x)=\psi(x')\]
whenever $x'\in \RR^{\Gamma}$ has $x'(g)=x(g)$ for all $g\in E.$ It is well known that there is a $\widetilde{\phi}\in C_{c}^{\infty}(\RR^{E})$ so that
\[\|\widetidle{\phi}\big|_{K_{E}}-\widetidle{\psi}\big\|_{C(K_{E})}<\varepsilon\]
\[\|\widetilde{\phi}\|_{C_{b}(\RR^{E})}\leq \|f\|_{C_{b}(\RR^{A})}.\]
Let
\[\phi=\widetilde{\phi}\otimes 1_{\RR^{A\setminus E}}.\]
Since $\widetilde{\phi}$ is a Schwartz function, we have that $\phi\in S(\RR^{A}).$  Finally,
\[\|\phi\big|_{K}-f\big|_{K}\|_{C(K)}\leq \|\phi\big|_{K}-\psi\|_{C(K)}+\|\psi-f\big|_{K}\|_{C(K)}<2\varepsilon.\]
This completes the proof.

\end{proof}

\begin{theorem}\label{T:GaussianproductSection6} Let $\Gamma$ be a countable discrete sofic group with sofic approximation $\Sigma=(\sigma_{i}\colon\Gamma\to S_{d_{i}}).$ Let $\rho\colon\Gamma\to \mathcal{O}(\mathcal{H})$  be an orthogonal representation on a real, separable Hilbert space $\mathcal{H}$ with $\rho\ll \lambda_{\Gamma,\RR}.$ Let $\Gamma\actson (X_{\rho},\mu_{\rho})$ be the corresponding Gaussian action. Let $(Y,\nu)$ be a standard probability space and $\Gamma\actson (Y,\nu)$ a measure-preserving action with $h_{\Sigma,\nu}(Y,\Gamma)\geq 0.$ Then $h_{\Sigma,\mu_{\rho}\otimes \nu}(X_{\rho}\times Y,\mu_{\rho}\otimes\nu)=\infty.$
\end{theorem}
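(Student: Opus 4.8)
The plan is to produce, for each sufficiently small $\varepsilon>0$ and uniformly in the remaining microstate data, at least $\exp\!\big(d_i\big(\tau(q_1)\log\tfrac1\varepsilon-o_\varepsilon(\log\log\tfrac1\varepsilon)\big)\big)$ many $\Delta_2$-$\varepsilon$-separated microstates for $\mu_\rho\otimes\nu$; letting $\varepsilon\to0$ then forces the entropy to be $+\infty$. We may assume $\mathcal H\ne\{0\}$ and, replacing $(Y,\nu)$ by a measure-isomorphic compact model (which does not change $h_{\Sigma,\nu}(Y,\Gamma)$), that $Y$ is compact metric. By Zorn's lemma and Lemmas~\ref{L:realproblemsman} and~\ref{L:realembedding} write $\rho=\bigoplus_{k\ge1}\rho_{q_k}$ with $\rho_{q_k}\cong\Gamma\actson\ell^2(\Gamma,\RR)q_k$ for orthogonal projections $q_k\in L_{\RR}(\Gamma)$ and, after re-indexing, $\tau(q_1)>0$; then $X_\rho\cong\prod_kX_{\rho_{q_k}}$ with each $X_{\rho_{q_k}}=(\RR^\Gamma,\mu_{q_k\delta_e})$ by Proposition~\ref{P:cyclic}. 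Fix a bounded compatible metric $\Delta'$ on $\RR$ comparable to the Euclidean one on bounded sets, equip $\prod_k\RR^\Gamma\times Y$ with a bounded, dynamically generating pseudometric $\Delta$ dominating a fixed multiple of the first-coordinate pseudometric $(x,y)\mapsto\Delta'(x_1(e),y_1(e))$, and let $\mathcal L$ be the generating family of (Schwartz-in-finitely-many-coordinates)$\,\otimes\,C(Y)$ functions. By Theorem~\ref{T:generating} it suffices to bound $h_{\Sigma,\mu_\rho\otimes\nu}(\Delta,\varepsilon,\mathcal L)$ from below.

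Fix $\varepsilon>0$ and finite $F$, finite $L\subseteq\mathcal L$, $\delta,\eta>0$; enlarging $F$ we may assume every $f\in L$ is Schwartz in the coordinates indexed by $\{(k,g):k\le n,\ g\in E\}$ (with $E\supseteq F\cup F^{-1}\cup\{e\}$) tensored with a function on $Y$. Using Proposition~\ref{P:realextension} choose projections $q_i^{(k)}\in M_{d_i}(\RR)$ with $\|q_i^{(k)}-\sigma_i(q_k)\|_2\to0$, let $\nu_i^{(k)}$ be the Gaussian $e^{-\pi\|x\|^2}dx$ on $q_i^{(k)}\RR^{d_i}$, and (since $h_{\Sigma,\nu}(Y,\Gamma)\ge0>-\infty$) fix, for infinitely many $i$, a microstate $\psi_i$ for $(Y,\nu)$ for the data at hand. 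With $x^{(1)},\dots,x^{(n)}$ sampled independently from $\nu_i^{(1)},\dots,\nu_i^{(n)}$, set $\Psi_{\vec x}=\phi_{x^{(1)}}^{(E)}\otimes\cdots\otimes\phi_{x^{(n)}}^{(E)}\otimes 0\otimes\psi_i$. Lemma~\ref{L:generalmicrostates} provides approximate equivariance; iterating Lemma~\ref{L:concentration} over the $n$ excited coordinates (at stage $k$ with $\prod_{j<k}\RR^\Gamma\times Y$ and the partially built microstate in the roles of $Y$ and $\psi_i$, after Fourier-expanding each $f\in L$ in its first $n$ coordinate blocks as in the proof of that lemma — this uses the evident, identically-proved extension of Lemma~\ref{L:concentration} to Polish targets) shows $\Psi_{\vec x}\in U_{L,\delta}(\mu_\rho\otimes\nu)$ off a set of measure $o(1)$. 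A sub-Gaussian tail bound on the coordinates of the $\nu_i^{(k)}$ (whose marginal variances are $\le\tfrac1{2\pi}$) together with Markov's inequality shows that, once $R$ is large enough in terms of $\eta,n,|E|$, with probability $1-o(1)$ the configuration $\Psi_{\vec x}$ meets the fixed compact box $K$ — tuples supported on $E$ with $E$-entries in $[-R,R]$ in the first $n$ coordinates, $0$ in the other $\RR^\Gamma$-coordinates, arbitrary in $Y$ — on a $(1-\eta)$-fraction of $\{1,\dots,d_i\}$, hence $(\Psi_{\vec x})_*u_{d_i}(U)>1-\eta$ for all open $U\supseteq K$. Thus there is a set $G_i$ with $\prod_k\nu_i^{(k)}(G_i)\to1$ and $\Psi_{\vec x}\in\Map_{\mu_\rho\otimes\nu}^{U,\eta}(\Delta,F,\delta,L,\sigma_i)$ for all $\vec x\in G_i$, all $U\supseteq K$, and all large $i$.

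For the count, freeze $x^{(2)},\dots,x^{(n)}$ on a positive-measure set of tuples for which the $x^{(1)}$-slice of $G_i$ retains $\nu_i^{(1)}$-measure bounded below (Fubini), and intersect this slice with the events $\|x^{(1)}\|^2\le d_i$ and $\sum_{j:\,|x^{(1)}(j)|>R'}|x^{(1)}(j)|^2\le\eta_1 d_i$, where $R'\asymp\sqrt{\log(1/\varepsilon)}$ and $\eta_1\asymp R'e^{-\pi R'^2}\ll\varepsilon^2$; by the same tail bound the resulting $S_i\subseteq q_i^{(1)}\RR^{d_i}$ has $\nu_i^{(1)}$-measure $\ge\kappa>0$, where $\kappa,R',\eta_1$ depend only on $\varepsilon$ — crucially, \emph{not} on $\eta,F,L,\delta$. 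For $x,z\in S_i$ the pseudometric $\Delta'$ is comparable to $\tfrac1{2R'}|\cdot|$ on the $\ge(1-o(1))d_i$ coordinates where both $|x(j)|,|z(j)|\le R'$, while the other coordinates add $O(\eta_1 d_i)$ to $\|x-z\|^2$; hence $\|x-z\|_{\ell^2(d_i)}\ge cR'\varepsilon\sqrt{d_i}$, with $c$ depending only on $\Delta'$, implies $\Delta_2(\Psi_{(x,\dots)},\Psi_{(z,\dots)})\ge\varepsilon$. Since $\nu_i^{(1)}$ has density $\le1$ on $q_i^{(1)}\RR^{d_i}$, of dimension $\tr(q_i^{(1)})d_i\to\tau(q_1)d_i$, every Euclidean ball there of radius $cR'\varepsilon\sqrt{d_i}$ has $\nu_i^{(1)}$-measure at most its volume, which by Stirling is $\exp\!\big(d_i\big(\tfrac{\tau(q_1)}2\log\tfrac{2\pi ec^2R'^2\varepsilon^2}{\tau(q_1)}+o(1)\big)\big)$. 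A maximal $cR'\varepsilon\sqrt{d_i}$-separated subset of $S_i$ thus has size $\ge\kappa\exp\!\big(d_i\big(\tau(q_1)\log\tfrac1\varepsilon-\tau(q_1)\log R'-O(1)-o(1)\big)\big)$, giving that many $\Delta_2$-$\varepsilon$-separated microstates. With $R'\asymp\sqrt{\log(1/\varepsilon)}$ this yields $h_{\Sigma,\mu_\rho\otimes\nu}(\Delta,\varepsilon,\mathcal L)\ge\tau(q_1)\log\tfrac1\varepsilon-\tfrac{\tau(q_1)}2\log\log\tfrac1\varepsilon-C(\tau(q_1),\Delta')$, and $\varepsilon\to0$ gives $h_{\Sigma,\mu_\rho\otimes\nu}(X_\rho\times Y,\Gamma)=+\infty$.

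The main obstacle is this quantitative counting step: one must convert $\Delta_2$-separation in the bounded pseudometric $\Delta'$ into genuine Euclidean separation of the Gaussian samples (truncating at $R'\asymp\sqrt{\log(1/\varepsilon)}$ and discarding the exceptional mass via the sub-Gaussian bound) and then carry out the small-ball volume estimate for a $\tau(q_1)d_i$-dimensional Gaussian, all while staying inside the probability-$(1-o(1))$ microstate set supplied by Lemmas~\ref{L:generalmicrostates} and~\ref{L:concentration}; in particular the counting scale $R'$ must be kept independent of the tightness parameter $\eta$ so that the lower bound is uniform over $F,L,\delta,\eta$. A secondary technical point is the extension of Lemma~\ref{L:concentration} to a non-compact Polish target, needed to iterate it over the finitely many excited coordinates.
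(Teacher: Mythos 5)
Your proposal is correct, and its skeleton is the same as the paper's: realize each cyclic piece of $\rho$ as $\Gamma\actson\ell^{2}(\Gamma,\RR)q$ via Lemma \ref{L:realembedding}, build microstates $\phi^{(E)}_{x}$ from Gaussian samples $x$ on $q_{i}\RR^{d_{i}}$ using Proposition \ref{P:realextension}, verify them with Lemmas \ref{L:generalmicrostates} and \ref{L:concentration} and Theorem \ref{T:generating}, and count by a volume estimate. You diverge from the paper in two places. First, the paper reduces to the cyclic case and then treats general $\rho$ by induction plus an inverse-limit argument, choosing a compact model for the accumulated factor at each stage; you instead excite the first $n$ cyclic blocks simultaneously (padding the rest with $0$), which works but requires the multi-block, Polish-target extension of Lemma \ref{L:concentration} that you flag --- that extension is indeed proved verbatim (the characteristic function factors over the independent blocks and the proof uses only $\|g\|_{\infty}$ and the Fourier expansion of $f$), but it is the one piece you would actually have to write out. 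Second, and more substantively, your counting step is different: the paper covers the good set by cylinders $(x_{A}+\sqrt{\varepsilon}\Ball)\times\RR^{A^{c}}$ over $(1-\sqrt{\varepsilon})d_{i}$ coordinates and bounds the Gaussian measure of a cylinder by the Lebesgue volume of its base in $\RR^{A}$, whereas you truncate the samples at height $R'\asymp\sqrt{\log(1/\varepsilon)}$ so that $\Delta_{2}$-separation becomes genuine Euclidean separation inside the subspace $q_{i}^{(1)}\RR^{d_{i}}$, and then apply the small-ball bound for the nondegenerate Gaussian there. Your version is the more robust of the two: when $\tau(q)<1$ the coordinate marginal of the Gaussian on $q_{i}\RR^{d_{i}}$ onto $\RR^{A}$ with $|A|>\tau(q)d_{i}$ is singular, so the cylinder-versus-base-volume comparison needs care, while your intrinsic count on the $\tr(q_{i}^{(1)})d_{i}$-dimensional subspace is clean and produces the correct leading term $\tau(q_{1})\log(1/\varepsilon)$; the price is the harmless $\log\log(1/\varepsilon)$ loss from the truncation scale and the need (which you correctly observe) to keep $R',\eta_{1},\kappa$ functions of $\varepsilon$ alone so that the bound survives the infimum over $F,L,\delta,\eta$ and over open $U\supseteq K$. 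Both routes give a lower bound diverging as $\varepsilon\to0$, hence infinite entropy.
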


\begin{proof}
 We shall first reduce to the case that $\rho$ is cyclic, i.e there is a vector $\xi\in\mathcal{H}$ so that
\[\mathcal{H}=\overline{\Span\{\rho(g)\xi:g\in\Gamma\}},\]
so suppose we can prove the Theorem in the cyclic case. Let
\[\rho=\rho_{1}\oplus \rho_{2},\]
where $\rho_{1}$ is cyclic and $\rho_{2}\ll \lambda_{\Gamma,\RR}.$ Since we are assuming the theorem in the cyclic case and
\[\Gamma\actson (X_{\rho}\times Y,\mu_{\rho}\otimes \nu)\cong \Gamma\actson (X_{\rho_{1}}\times X_{\rho_{2}}\times Y,\mu_{\rho_{1}}\otimes \mu_{\rho_{2}}\otimes \nu)\]
it suffices to show that
\begin{equation}\label{E:reductionSection6asdlghlajh}
h_{\Sigma,\mu_{\rho_{2}}\otimes \nu}(X_{\rho_{2}}\times Y,\Gamma)\geq 0,
\end{equation}
as $\rho_{1}$ is cyclic. Since $\mathcal{H}$ is separable, we may write
\[\rho_{2}=\bigoplus_{n=1}^{\infty}\rho_{2,n}\]
where $\rho_{2,n}$ is cyclic. For $n\in\NN,$ let
\[\rho_{2,\leq N}=\bigoplus_{n=1}^{N}\rho_{2,n}.\]
By the cyclic case and induction, we have
\[h_{\Sigma,\mu_{\rho_{2,\leq N}}\otimes \nu}(X_{\rho_{2,\leq N}}\times Y,\Gamma)=\infty\]
for any $N\in\NN.$ Using that
\[\Gamma\actson (X_{\rho_{2}}\times Y,\mu_{\rho_{2}}\otimes \nu)\cong \varprojlim\Gamma\actson(X_{\rho_{2,\leq N}}\times Y,\mu_{\rho_{2,\leq N}}\otimes\nu),\]
 it is not hard to show (\ref{E:reductionSection6asdlghlajh}).

We use the preceding Lemma and Theorem \ref{T:generating}.  By Proposition \ref{P:cyclic} we may regard $\Gamma\actson (X_{\rho},\mu_{\rho})$ as $\RR^{\Gamma}$ with the measure $\mu_{p\delta_{e}}$ defined by
\[\int_{\RR^{\Gamma}}\exp(2\pi i t\cdot x)\,d\mu_{p\delta_{e}}(x)=\exp(-\pi\|\lambda(t)p\delta_{e}\|_{2}^{2})\]
for $t\in c_{c}(\Gamma,\RR)$ and some orthogonal projection $p\in L_{\RR}(\Gamma).$  Extend $\Sigma$ to an approximation sequence of $L(\Gamma)$ still denoted $\sigma_{i}\colon L(\Gamma)\to M_{d_{i}}(\CC).$ By Proposition \ref{P:realextension}, there exists a sequence $p_{i}$ of orthogonal projections in $M_{d_{i}}(\RR)$ so that
\[\|p_{i}-\sigma_{i}(p)\|_{2}\to 0.\]
Let $\nu_{i}$ be the Gaussian measure on $p_{i}\RR^{d_{i}}$ defined by
\[d\nu_{i}(x)=e^{-\pi\|x\|_{\ell^{2}(d_{i})}^{2}}\,dx.\] Choosing a compact model, we may assume that $Y$ is a compact metrizable space and that $\Gamma\actson Y$ by homeomorphisms. Let $\Delta_{Y}$ be a compatible metric on $Y.$ Let $\Delta_{\RR^{\Gamma}}$ be the dynamically pseudometric on $\RR^{\Gamma}$ defined by
\[\Delta_{\RR^{\Gamma}}(x,y)=\min(|x(e)-y(e)|,1).\]
We shall use the generating set $\mathcal{L}=\{f\otimes g:f\in S(\RR^{\Gamma}),g\in C(Y)\}$. We use the dynamically generating pseudometric on $\RR^{\Gamma}\times Y$ defined by
\[\Delta((x_{1},y_{1}),(x_{2},y_{2}))=\Delta_{\RR^{\Gamma}}(x_{1},x_{2})+\Delta_{Y}(y_{1},y_{2}).\]
Let $\varepsilon,\delta,\eta>0$ and finite $F\subseteq\Gamma$,$L\subseteq\mathcal{L}$ be given.

Let $e=g_{1}$,$g_{2},\dots$ be an enumeration of the elements of $\Gamma.$ Inductively find positive real numbers $M_{1},M_{2},\dots$ so that
\[\mu_{p\delta_{e}}\left(\{x\in \RR^{\Gamma}:~|x(g_{j})|\leq M_{j},j=1,\dots,l\}\right)\geq 1-\sum_{j=1}^{l}2^{-j}\eta.\]
Define $M\colon \Gamma\to (0,\infty)$ by $M(g_{j})=M_{j}.$ Set
\[K_{M}=\{x\in \RR^{\Gamma}:|x(g)|\leq M(g)\mbox{ for all $g\in\Gamma$}\}\]
and note that $K_{M}$ is compact.
Let $U$ be an open neighborhood of $K_{M}$ in $\RR^{\Gamma}.$

Let $L_{1}\subseteq S(\RR^{\Gamma}),L_{2}\subseteq C(Y)$ be finite sets so that
\[L\subseteq \{f\otimes g:f\in L_{1},g\in L_{2}\}.\]
Since
\[h_{\Sigma,\nu}(Y,\Gamma)\geq 0,\]
for infinitely many $i,$ there is a $\psi_{i}\in \Map_{\nu}(\Delta_{Y},F,\delta,L_{2},\sigma_{i}).$  For these $i,$ by Lemma \ref{L:concentration}, there is a finite set $E$ of $\Gamma$ containing the identity and an $B_{i}\subseteq p_{i}\RR^{d_{i}}$ so that
\[\nu_{i}(B_{i})\to 1\]
\[\phi^{(E)}_{x}\otimes \psi_{i}\in \Map_{\mu\otimes \nu}(\Delta,F,\delta,L,\sigma_{i})\]
for all $x\in B_{i}.$ We will also assume that
\[U\supseteq \pi_{E}(K_{M})\times \RR^{\Gamma\setminus E}.\]
This may be done by enlarging $E,$  as $K_{M}$ is compact. Choose $f\in S(\RR^{E})$ so that
\[\int_{\RR^{E}}f\,d(\pi_{E})_{*}\mu_{\rho}\geq -\eta+(\pi_{E})_{*}\mu(\pi_{E}(K_{M}))\]
and
\[0\leq f\leq \chi_{\pi_{E}(K_{M})}.\]
We may assume that
\[\phi^{(E)}_{x}\otimes \psi_{i}\in \Map_{\mu\otimes \nu}(\Delta,F,\delta,L\cup\{f\otimes 1_{\RR^{\Gamma\setminus E}\otimes 1}\},\sigma_{i}),\]
for all $x\in B_{i}.$
Note that in this case
\begin{align*}
(\phi^{(E)}_{x}\otimes \psi_{i})_{*}(u_{d_{i}})(U\times Y)&\geq (\phi^{(E)}_{x}\otimes \psi_{i})_{*}(u_{d_{i}})(\pi_{E}(K_{M})\times \RR^{\Gamma\setminus E}\times Y)\\
&\geq -\delta+\int_{\RR^{\Gamma}}f\otimes 1_{\RR^{\Gamma\setminus E}}\,d\mu_{p}\\
&\geq -\delta-\eta+(\pi_{E})_{*}\mu(\pi_{E}(K_{M}))\\
&\geq-\delta-2\eta+1.
\end{align*}
So if $\delta<\eta$ we have
\[\phi^{(E)}_{x}\otimes \psi_{i}\in \Map_{\mu\otimes\nu}^{U\times Y,3\eta}(\Delta,F,\delta,L\cup\{f\otimes 1_{\RR^{\Gamma\setminus E}}\otimes 1\},\sigma_{i}).\]
Thus
\[h_{\Sigma,\mu_{p}\otimes \nu}^{U\times Y,\eta}(\Delta,\varepsilon,F,\delta,L\cup\{f\otimes 1_{\RR^{\Gamma\setminus E}}\otimes 1\})\geq \limsup_{i\to\infty}\frac{1}{d_{i}}\log S_{\varepsilon}(B_{i},\Delta_{2}).\]

	Suppose that $S\subseteq B_{i}$ is $\varepsilon$-dense. Then for every $y\in B_{i},$ there exists $x\in S$ and a $C\subseteq\{1,\dots,d_{i}\}$ so that
\[|x(j)-y(j)|<\sqrt{\varepsilon},\mbox{ for $j\in C$},\]
\[|C|\geq (1-\sqrt{\varepsilon})d_{i}.\]
Using $m$ for Lebesgue measure,
\begin{align*}
\nu_{i}(B)&\leq \sum_{\substack{x\in S,\\ A\subseteq \{1,\dots,d_{i}\},\\ |A|\geq (1-\sqrt{\varepsilon})d_{i}}}\nu_{i}((x_{A}+\sqrt{\varepsilon}\Ball(\ell^{2}_{\RR}(A,u_{A})))\times \RR^{A^{c}})\\
&\leq  \sum_{\substack{x\in S,\\ A\subseteq \{1,\dots,d_{i}\},\\ |A|\geq (1-\sqrt{\varepsilon})d_{i}}}m(\sqrt{\varepsilon}\Ball(\ell^{2}_{\RR}(A,u_{A})))\\
&=\sum_{\substack{x\in S,\\ A\subseteq \{1,\dots,d_{i}\},\\ |A|\geq (1-\sqrt{\varepsilon})d_{i}}}\varepsilon^{|A|/2}m(\Ball(\ell^{2}_{\RR}(A,u_{A})))
\end{align*}
By \cite{Folland} Corollary 2.55 and Stirling's Formula, there is a $R>0$ so that
\[m(\Ball(\ell^{2}_{\RR}(A,u_{A})))\leq R^{|A|/2}\left(\frac{d_{i}}{|A|}\right)^{|A|/2}.\]
Set
\[H(t)=\begin{cases}
0,&\textnormal{ if $t=0,1$}\\
-t\log(t)-(1-t)\log(1-t),&\textnormal{ for $0<t<1$.}
\end{cases}\]
Then for some $D>0$ we have
\begin{align*}
\nu_{i}(B)&\leq\sum_{\substack{x\in S,\\ A\subseteq \{1,\dots,d_{i}\},\\ |A|\geq (1-\sqrt{\varepsilon})d_{i}}}\varepsilon^{\frac{d_{i}}{2}}R^{|A|/2}\left(\frac{d_{i}}{|A|}\right)^{|A|/2}\\
&\leq |S|D\sqrt{\varepsilon}d_{i} R^{d_{i}/2}\exp(H(\sqrt{\varepsilon})d_{i})\left(\frac{\varepsilon}{(1-\sqrt{\varepsilon})}\right)^{\frac{d_{i}}{2}},
\end{align*}
the last line following by Stirling's Formula.
Thus
\[\limsup_{i\to\infty}\frac{1}{d_{i}}\log |S|\geq \frac{1}{2}\log\left(\frac{1-\sqrt{\varepsilon}}{\varepsilon}\right)-\frac{1}{2}\log R-H(\sqrt{\varepsilon}).\]
So
\[h_{\Sigma,\mu_{p}\otimes \nu}^{U\times Y,3\eta}(\Delta,\varepsilon,F,\delta,L)\geq h_{\Sigma,\mu_{p}\otimes \nu}^{U\times Y,3\eta}(\Delta,\varepsilon,F,\delta,L\cup\{f\otimes \RR^{\Gamma\setminus E}\otimes 1\})\geq \frac{1}{2}\log\left(\frac{1-\sqrt{\varepsilon}}{\varepsilon}\right)-\frac{1}{2}\log R-H(\sqrt{\varepsilon}),\]
taking the infimum over all $U\supseteq K_{M},$ then the supremum over all $K_{M},$ then the infimum over all  $F,\delta,\eta$ and $L\subseteq\mathcal{L}$ we find that
\[h_{\Sigma,\mu_{p\delta_{e}}\otimes\nu}(\Delta,\varepsilon,\mathcal{L})\geq \frac{1}{2}\log\left(\frac{1-\sqrt{\varepsilon}}{\varepsilon}\right)-\frac{1}{2}\log R-H(\sqrt{\varepsilon}) .\]
If we now let $\varepsilon\to 0$ we see that
\[h_{\Sigma,\mu_{p\delta_{e}}\otimes \nu}(X\times Y,\Gamma)=\infty.\]

\end{proof}

We now prove a general formula for the entropy of Gaussian actions.

\begin{cor}\label{C:GaussianEntropyFormula} Let $\Gamma$ be a countable discrete sofic group with sofic approximation $\Sigma=(\sigma_{i}\colon \Gamma\to S_{d_{i}}).$ Let $\rho\colon \Gamma\to \mathcal{O}(\mathcal{H})$ be an orthogonal representation. By Proposition \ref{P:LebesgueDecompositionOR} write $\rho=\rho_{1}\oplus \rho_{2}$ where $\rho_{1}\ll \lambda_{\Gamma,\RR}$ and $\rho_{2}\perp \lambda_{\Gamma,\RR}.$ Let $\Gamma\actson (X_{\rho},\mu_{\rho}),\Gamma\actson (X_{\rho_{j}},\mu_{\rho_{j}}),j=1,2$  be the corresponding Gaussian actions. Then:

(i): $h_{\Sigma,\mu_{\rho_{2}}}(X_{\rho_{2}},\mu_{\rho_{2}})\in \{0,-\infty\},$

(ii):
\[h_{\Sigma,\mu_{\rho}}(X_{\rho_{1}},\mu_{\rho_{1}})=\begin{cases}
-\infty, &\mbox{ if $h_{\Sigma,\mu_{\rho_{2}}}(X_{\rho_{2}},\mu_{\rho_{1}})=-\infty,$}\\
0 , &\mbox{ if $\rho_{1}=0$ and $h_{\Sigma,\mu_{\rho_{2}}}(X_{\rho_{2}},\mu_{\rho_{2}})=0,$}\\
\infty, &\mbox{ if $\rho_{1}\ne 0$ and $h_{\Sigma,\mu_{\rho_{2}}}(X_{\rho_{2}},\mu_{\rho_{2}})=0.$}
\end{cases}\]
\end{cor}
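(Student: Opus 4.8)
The plan is to assemble the corollary from Corollary \ref{C:zerogaussian}, Theorem \ref{T:GaussianproductSection6}, and the product decomposition $\Gamma\actson(X_{\rho},\mu_{\rho})\cong\Gamma\actson(X_{\rho_{1}}\times X_{\rho_{2}},\mu_{\rho_{1}}\otimes\mu_{\rho_{2}})$, using throughout that $h_{\Sigma,\mu}$ is a measure-conjugacy invariant (it agrees with the Bowen--Kerr-Li entropy by \cite{Me6}).

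For part (i): since $\rho_{2}\perp\lambda_{\Gamma,\RR}$, Corollary \ref{C:zerogaussian} gives $h_{\Sigma,\mu_{\rho_{2}}}(X_{\rho_{2}},\Gamma)\le 0$. On the other hand, $\frac{1}{d_{i}}\log N_{\varepsilon}(\Map_{\mu}(\dots,\sigma_{i}),\Delta_{2})$ is always either $-\infty$ (empty microstate space) or $\ge 0$ (a nonempty set contains a one-point $\varepsilon$-separated subset), and taking infima over $F,\delta,L$ and suprema over $\varepsilon$ preserves membership in $\{-\infty\}\cup[0,\infty]$; hence every such entropy lies in $\{-\infty\}\cup[0,\infty]$. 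Combined with the upper bound $0$, this yields $h_{\Sigma,\mu_{\rho_{2}}}(X_{\rho_{2}},\mu_{\rho_{2}})\in\{0,-\infty\}$.

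For part (ii) I would treat the three cases separately. If $\rho_{1}=0$ then $\rho=\rho_{2}$ and there is nothing to prove. If $h_{\Sigma,\mu_{\rho_{2}}}(X_{\rho_{2}},\mu_{\rho_{2}})=-\infty$, I would observe that the coordinate projection exhibits $\Gamma\actson(X_{\rho_{1}}\times X_{\rho_{2}})$ as an extension of $\Gamma\actson X_{\rho_{2}}$ and appeal to the principle that $h=-\infty$ passes to extensions: choosing the product pseudometric on $X_{\rho_{1}}\times X_{\rho_{2}}$ (so the projection is $1$-Lipschitz) and pulling back test functions, one checks that composing any microstate for the product with the projection yields a microstate for $X_{\rho_{2}}$; since the latter microstate spaces are eventually empty, so are the former, and hence $h_{\Sigma,\mu_{\rho}}(X_{\rho},\Gamma)=-\infty$. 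Finally, if $\rho_{1}\ne 0$ and $h_{\Sigma,\mu_{\rho_{2}}}(X_{\rho_{2}},\mu_{\rho_{2}})=0$, I would apply Theorem \ref{T:GaussianproductSection6} with $\rho_{1}$ (nonzero, separable, and $\ll\lambda_{\Gamma,\RR}$) in place of $\rho$ and with $(Y,\nu)=(X_{\rho_{2}},\mu_{\rho_{2}})$; this is legitimate since $h_{\Sigma,\mu_{\rho_{2}}}(X_{\rho_{2}},\Gamma)\ge 0$, and it produces $h_{\Sigma,\mu_{\rho_{1}}\otimes\mu_{\rho_{2}}}(X_{\rho_{1}}\times X_{\rho_{2}},\Gamma)=\infty$, that is, $h_{\Sigma,\mu_{\rho}}(X_{\rho},\Gamma)=\infty$.

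The substantive content is entirely in Theorem \ref{T:GaussianproductSection6}, which we may assume; within the corollary the only step needing real care is the $-\infty$ case, where one must verify that coordinate projections carry good microstates to good microstates for a compatible choice of pseudometric and the appropriate test functions. This is routine given the explicit product structure, but it is worth writing out rather than leaving implicit.
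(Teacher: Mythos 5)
Your proposal is correct and follows essentially the same route as the paper: part (i) from Corollary \ref{C:zerogaussian} together with the fact that sofic entropy lies in $\{-\infty\}\cup[0,\infty]$, the $-\infty$ case of (ii) from the general fact that $h_{\Sigma,\mu}(X,\Gamma)=-\infty$ forces $h_{\Sigma,\mu\otimes\nu}(X\times Y,\Gamma)=-\infty$, the $\rho_{1}=0$ case as a special case of (i), and the last case from Theorem \ref{T:GaussianproductSection6} applied with $(Y,\nu)=(X_{\rho_{2}},\mu_{\rho_{2}})$. The only difference is that you spell out (via projecting microstates along the coordinate map with the sum pseudometric and pulled-back test functions) the product fact that the paper merely asserts, which is a reasonable amount of added detail.
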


\begin{proof}

Statement $(i)$ is just a direct corollary of Corollary \ref{C:zerogaussian} and the fact that sofic entropy is always nonnegative or $-\infty.$  The first case of statement $(i)$ follows from the general fact that if $\Gamma\actson (X,\mu),\Gamma\actson (Y,\nu)$ are two measure-preserving actions on standard probability spaces, then
\[h_{\Sigma,\mu}(X,\Gamma)=-\infty\]
implies that
\[h_{\Sigma,\mu\otimes \nu}(X\times Y,\Gamma)=-\infty.\]
The second case of statement $(ii)$ is also just a special case of statement $(i).$ The last case of statement $(ii)$ follows from Theorem \ref{T:GaussianproductSection6}.

\end{proof}

We give some examples to show that $h_{\Sigma,X_{\rho}}(X_{\rho},\mu_{\rho})$ can be $-\infty.$ Let $\Gamma$ be a countable discrete sofic group with sofic approximation $\Sigma=(\sigma_{i}\colon \Gamma\to S_{d_{i}}).$ We say that $\Sigma$ is \emph{ergodic} if whenever $A_{i}\subseteq \{1,\dots,d_{i}\}$  are such that
\[\lim_{i\to \infty}u_{d_{i}}(A_{i}\Delta\sigma_{i}(g)A_{i})=0\mbox{ for all $g\in \Gamma$}\]
then
\[\lim_{i\to\infty}u_{d_{i}}(A_{i})(1-u_{d_{i}}(A_{i}))=0.\]

The following is a folklore result and we include a proof for completeness.

\begin{proposition} Let $\Gamma$ be a countable discrete sofic group with an ergodic sofic approximation $\Sigma.$ Then, if $\Gamma\actson (X,\mu)$ is a nonergodic probability measure-preserving action on a standard probability space, we have
\[h_{\Sigma,\mu}(X,\Gamma)=-\infty.\]
\end{proposition}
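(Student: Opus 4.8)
The plan is to argue by contradiction: assuming $h_{\Sigma,\mu}(X,\Gamma)>-\infty$, I will extract from microstates a sequence of subsets $A_i\subseteq\{1,\dots,d_i\}$ that is asymptotically invariant under $\Sigma$ but has $u_{d_i}(A_i)$ bounded away from $0$ and $1$, contradicting ergodicity of $\Sigma$. Work in a Polish model on which $\Gamma$ acts by homeomorphisms. Nonergodicity, together with countability of $\Gamma$, furnishes a genuinely $\Gamma$-invariant Borel set $A$ with $c:=\mu(A)\in(0,1)$. For each $n$, inner/outer regularity of $\mu$ gives a closed $C_n\subseteq A$ and an open $U_n\supseteq A$ with $\mu(U_n\setminus C_n)<4^{-n}$, and Urysohn's lemma gives $f_n\in C_b(X)$ with $\chi_{C_n}\le f_n\le\chi_{U_n}$. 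Two features of $f_n$ matter: $\int f_n\,d\mu$ and $\int f_n^2\,d\mu$ both lie within $2\cdot 4^{-n}$ of $c$; and, because $A$ is invariant, $f_n(gx)=f_n(x)$ off $(U_n\setminus C_n)\cup g^{-1}(U_n\setminus C_n)$, so $\|f_n\circ g-f_n\|_{L^2(\mu)}^2<2\cdot 4^{-n}$ for every $g\in\Gamma$, uniformly in $g$.

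Since by \cite{Me6} the number $h_{\Sigma,\mu}(X,\Gamma)$ coincides with the entropy of \cite{Bow},\cite{KLi} and in particular does not depend on the choice of bounded dynamically generating pseudometric, I may use $\Delta:=\Delta_0+\sum_{n\ge 1}2^{-n}\min(|f_n(\cdot)-f_n(\cdot)|,1)$, where $\Delta_0$ is any bounded compatible metric on $X$ (such a $\Delta_0$ is automatically dynamically generating, taking $F=\{e\}$ in the definition). Then $\Delta$ is a bounded, dynamically generating pseudometric with the crucial feature that each $f_n$ is $2^n$-Lipschitz with respect to $\Delta$.

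Fix $n$, set $F_n=\{g_1,\dots,g_n\}$ in a fixed enumeration of $\Gamma$, $L_n=\{f_n,f_n^2\}\cup\{f_n\cdot(f_n\circ g):g\in F_n\}$, $\delta_n=8^{-n}$, and fix once and for all a threshold $\tau=\tau(c)\in(0,1)$ small compared with $\min(c,1-c)$. For any $\phi\in\Map_\mu(\Delta,F_n,\delta_n,L_n,\sigma_i)$ put $h=f_n\circ\phi$ and $B(\phi)=\{k:h(k)>\tau\}$. The test functions $f_n,f_n^2$ force $\tfrac1{d_i}\sum_k h(k)(1-h(k))$ to be small, so $h$ is concentrated near $\{0,1\}$; with $\tfrac1{d_i}\sum_k h(k)$ within $\delta_n+2\cdot4^{-n}$ of $c$, this pins $u_{d_i}(B(\phi))$ to within $\kappa_n$ of $c$, where $\kappa_n\to 0$, hence $u_{d_i}(B(\phi))\big(1-u_{d_i}(B(\phi))\big)\ge\beta_0$ for a constant $\beta_0=\beta_0(c)>0$ once $n$ is large. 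For $g\in F_n$, expand $\tfrac1{d_i}\sum_k|h(\sigma_i(g)k)-h(k)|^2=\tfrac2{d_i}\sum_k h(k)^2-\tfrac2{d_i}\sum_k h(k)h(\sigma_i(g)k)$, and write $h(k)h(\sigma_i(g)k)=\big(f_n\cdot(f_n\circ g)\big)(\phi(k))+f_n(\phi(k))\big(f_n(\phi(\sigma_i(g)k))-f_n(g\phi(k))\big)$. The average of the first term is within $\delta_n$ of $\int f_n\cdot(f_n\circ g)\,d\mu$, which by approximate invariance of $f_n$ is within $\sqrt{2\cdot4^{-n}}$ of $\int f_n^2\,d\mu$; the second term has average bounded by $\tfrac1{d_i}\sum_k|f_n(\phi(\sigma_i(g)k))-f_n(g\phi(k))|\le 2^n\,\Delta_2(g\phi,\phi\circ\sigma_i(g))<2^n\delta_n$, using that $f_n$ is $2^n$-Lipschitz. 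Hence $\|h\circ\sigma_i(g)-h\|_{\ell^2(d_i)}^2$ is bounded by a quantity $\rho_n$ depending only on $n$, with $\rho_n\to 0$; combined with the concentration of $h$ near $\{0,1\}$ this yields $u_{d_i}\big(B(\phi)\triangle\sigma_i(g)B(\phi)\big)<\varepsilon_n$ for all $g\in F_n$, with $\varepsilon_n\to 0$. All these bounds are uniform in $i$.

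Finally, if $h_{\Sigma,\mu}(X,\Gamma)>-\infty$, choose $\varepsilon>0$ with $h_{\Sigma,\mu}(\Delta,\varepsilon)>-\infty$; then $\Map_\mu(\Delta,F_n,\delta_n,L_n,\sigma_i)\ne\emptyset$ for infinitely many $i$, so pick $i_n\uparrow\infty$ and $\phi_n$ in the corresponding microstate set, and set $A_i:=B(\phi_n)$ when $i=i_n$ (for $n$ large) and $A_i:=\emptyset$ otherwise. For each fixed $g\in\Gamma$ one gets $u_{d_i}(A_i\triangle\sigma_i(g)A_i)\to 0$ (once $n$ exceeds the index of $g$ this quantity is $\le\varepsilon_n$, and it is $0$ off the subsequence), so ergodicity of $\Sigma$ forces $u_{d_i}(A_i)(1-u_{d_i}(A_i))\to 0$; but along $i=i_n$ this is $\ge\beta_0>0$, a contradiction, so $h_{\Sigma,\mu}(X,\Gamma)=-\infty$. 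I expect the only genuinely delicate point to be the step linking the measure-theoretic side of a microstate (which controls $C_b(X)$-functions such as $f_n$) to its dynamical side (which controls $\Delta$): since $A$ need not be clopen, $\chi_A$ is not $\Delta$-continuous for a generic $\Delta$, and building $\Delta$ out of the approximately-invariant functions $f_n$ — legitimate precisely because of the pseudometric-independence of entropy — is what repairs this. The remaining work (keeping all estimates uniform in $i$, and arranging the nested $F_n$, $L_n$, $\delta_n$ and the padded subsequence so that asymptotic invariance holds for every $g\in\Gamma$) is routine bookkeeping.
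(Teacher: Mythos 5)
Your proof is correct, and its core strategy is the same as the paper's: from any nonempty microstate space one extracts subsets $A_i\subseteq\{1,\dots,d_i\}$ of density pinned near $\mu(A)\in(0,1)$ that are asymptotically invariant under $\sigma_i$, which contradicts ergodicity of $\Sigma$ and forces the microstate spaces to be eventually empty. The difference is in which formalism carries the argument. The paper switches to Kerr's partition/observable definition of sofic entropy (invoking its equivalence with the Polish-model definition): there a microstate is a map into the finite partition $\{E,E^{c}\}$, so the approximately invariant set is literally $\phi^{-1}(\widetilde{E})$ and the whole proof is a few lines. You instead stay inside the Polish-model definition used throughout the paper, which costs you real work because $\chi_A$ is not continuous: you must sandwich it by Urysohn functions $f_n$, put $f_n$, $f_n^2$ and $f_n\cdot(f_n\circ g)$ into the test family $L$, and -- the one genuinely delicate point, which you correctly identify and resolve -- build the dynamically generating pseudometric out of the $f_n$ so that the dynamical (pseudometric) half of the microstate condition actually controls $f_n\circ\phi\circ\sigma_i(g)$ versus $f_n\circ(g\phi)$; this is legitimate because the entropy is independent of the choice of bounded dynamically generating pseudometric. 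What your route buys is self-containedness within the paper's own definition; what the paper's route buys is brevity. One cosmetic remark: your threshold set $B(\phi)=\{k:h(k)>\tau\}$ has density within roughly $\tau$ plus an $o(1)$ term of $c$, not within $\kappa_n\to 0$ of $c$ as literally stated, but since you fixed $\tau$ small relative to $\min(c,1-c)$ the lower bound $u_{d_i}(B(\phi))\bigl(1-u_{d_i}(B(\phi))\bigr)\geq\beta_0>0$ still holds and nothing breaks.
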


\begin{proof}
Let $\Sigma=(\sigma_{i}\colon\Gamma\to S_{d_{i}}).$  Let $E\subseteq X$ be a $\Gamma$-invariant set with $0<\mu(E)<1.$ Let $\alpha\colon X\to \{0,1\}$ be the finite observable given by
\[\alpha(x)=\chi_{E}(x).\]
Let $\varepsilon>0,$ by a diagonalization argument it is easy to see that there is a $\delta>0$  and a finite $F\subseteq\Gamma$ containing $e$ so that if  $B_{i}\subseteq \{1,\dots,d_{i}\}$ has
\begin{equation}\label{E:asymptoticerogidcity}
\limsup_{i\to \infty}\max_{g\in F}u_{d_{i}}(B_{i}\Delta\sigma_{i}(g)B_{i})<\delta,
\end{equation}
then
\begin{equation}\label{E:asymptoticmeasurezero}
\limsup_{i\to \infty}u_{d_{i}}(B_{i})(1-u_{d_{i}}(B_{i}))< \varepsilon.
\end{equation}
Set $\widetidle{E}=\{a\in \{0,1\}^{F}:a(e)=1\}.$ If $\delta$ is sufficiently small, and $\phi\in\AP(\alpha,F,\delta',\sigma_{i})$ then since $\mu(gE\Delta E)=0,$ we have
\[\max_{g\in F}u_{d_{i}}(\sigma_{i}(g)\phi^{-1}(\widetilde{E})\Delta\phi(\widetilde{E}))<\delta\]
and we always have
\[\mu(E)+\delta'\geq u_{d_{i}}(\phi^{-1}(\widetilde{E}))\geq \mu(E)-\delta'.\]
If $\delta',\varepsilon$ are sufficiently small, then
\[u_{d_{i}}(\phi^{-1}(\widetilde{E}))(1-u_{d_{i}}(\phi^{-1}(\widetilde{E})))>\varepsilon.\]
Thus we see from $(\ref{E:asymptoticerogidcity}),(\ref{E:asymptoticmeasurezero})$ that $\AP(\alpha,F,\delta',\sigma_{i})=\varnothing.$ Thus $h_{\Sigma,\mu}(X,\Gamma)=-\infty,$ using Kerr's definition of entropy via partitions.

\end{proof}

Combining with Theorem 2.8 of \cite{PetersonSinclair}   we have the following.

\begin{cor} Let $\Gamma$ be a countable discrete sofic group with an ergodic sofic approximation $\Sigma.$ Let $\rho\colon\Gamma\to \mathcal{O}(\mathcal{H})$ be an orthogonal representation which is not weakly mixing (e.g. $\rho$ could be compact). Then if $\Gamma\actson (X,\mu)$ is the associated Gaussian action,
\[h_{\Sigma,\mu}(X,\Gamma)=-\infty.\]

\end{cor}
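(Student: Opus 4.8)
The plan is to reduce the statement to the preceding Proposition via the structure theory of Gaussian actions. Recall that the Koopman representation of $\Gamma\actson (X,\mu)$ on mean‑zero functions decomposes, by the Fock‑space (Wiener chaos) decomposition, as a direct sum of the symmetric tensor powers of $\rho_{\CC}$; consequently $\Gamma\actson(X,\mu)$ is ergodic if and only if $\rho$ is weakly mixing — this is Theorem 2.8 of \cite{PetersonSinclair}. So my first step is to invoke that theorem: under the hypothesis that $\rho$ is not weakly mixing, the Gaussian action $\Gamma\actson(X,\mu)$ is a nonergodic probability measure‑preserving action on a standard probability space (here one uses, as throughout this section, that $\HS$ is separable so that $(X,\mu)$ is standard).

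I would make the nonergodicity explicit as follows. If $\rho$ fails to be weakly mixing, then $\HS$ contains a nonzero finite‑dimensional $\Gamma$‑invariant subspace $\HS_{0}$; since $\rho$ is orthogonal, the restriction of the inner product to $\HS_{0}$ is a $\Gamma$‑invariant symmetric bilinear form, and the vector it determines in the second symmetric power $\Sym^{2}(\HS_{0})\subseteq \Sym^{2}(\HS)\subseteq L^{2}(X,\mu)$ is a nonconstant $\Gamma$‑invariant function. This exhibits the nonergodicity directly and, in particular, covers the parenthetical example: a nonzero compact representation is a direct sum of finite‑dimensional subrepresentations and hence is never weakly mixing.

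With nonergodicity in hand the conclusion is immediate: since $\Sigma$ is an ergodic sofic approximation of $\Gamma$, the preceding Proposition applies and gives $h_{\Sigma,\mu}(X,\Gamma)=-\infty$. I do not expect a genuine obstacle here — all of the analytic work has already been done in Theorem \ref{T:GaussianproductSection6}, in the preceding Proposition, and in the cited dichotomy for Gaussian actions; the only non‑routine input is the ergodicity‑versus‑weak‑mixing equivalence for Gaussian actions, which is why we quote \cite{PetersonSinclair}, though the single direction we need is the elementary one sketched above.
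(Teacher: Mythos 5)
Your proposal is correct and follows the paper's own (essentially one-line) argument: the corollary is obtained by combining Theorem 2.8 of \cite{PetersonSinclair} (non-weak-mixing of $\rho$ implies nonergodicity of the Gaussian action) with the preceding Proposition on ergodic sofic approximations. Your explicit construction of a nonconstant invariant function from the invariant bilinear form on a finite-dimensional invariant subspace is a pleasant addition but does not change the route.
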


We mention an example of an ergodic sofic approximation when $\Gamma=\FF_{2}$ is the free group on two generators $a,b.$ Here we can choose a sofic approximation randomly. Namely let $\phi=(\phi_{1},\phi_{2})\in S_{n}^{2}$ be chosen uniformly at random, and let
\[\sigma_{\phi}\colon\FF_{2}\to S_{n}\]
be the unique homomorphism so that $\sigma_{\phi}(a)=\phi_{1},\sigma_{\phi}(b)=\phi_{2}.$ It is known that with high probability $\sigma_{\phi}$ is a sofic approximation (see \cite{Nica}, as well as \cite{KerrCPE} Lemma 3.1). It is also known from the theory of expanders that with high probability $\sigma_{\phi}$ is a ergodic sofic approximation (see \cite{Fried}, and the remarks in Section 5 of \cite{Pan2}). If we take an orthogonal representation $\rho\colon\FF_{2}\to \mathcal{O}(\mathcal{H})$ which is not weakly mixing (e.g. take $\mathcal{H}$ to be finite-dimensional) then we have
\[h_{\Sigma,\mu_{\rho}}(X_{\rho},\Gamma)=-\infty.\]
%

\end{document}